\newcommand{\+}{\protect\nobreakdash-}
\renewcommand{\:}{\colon}
\newcommand{\rarrow}{\longrightarrow}
\newcommand{\larrow}{\longleftarrow}
\newcommand{\tim}{\rightthreetimes}
\newcommand\mapsfrom{\mathrel{\reflectbox{\ensuremath{\longmapsto}}}}
\newcommand{\lrarrow}{\mskip.5\thinmuskip\relbar\joinrel\relbar\joinrel
 \rightarrow\mskip.5\thinmuskip\relax}
\DeclareMathOperator{\Hom}{Hom}
\DeclareMathOperator{\End}{End}
\DeclareMathOperator{\id}{id}
\DeclareMathOperator{\im}{im}
\DeclareMathOperator{\rad}{rad}
\newcommand{\Modl}{{\operatorname{\mathsf{--Mod}}}}
\newcommand{\Modr}{{\operatorname{\mathsf{Mod--}}}} 
\newcommand{\Modcs}{{\operatorname{\mathsf{Mod}_\cs\mathsf{--}}}} 
\newcommand{\Contra}{{\operatorname{\mathsf{--Contra}}}}
\newcommand{\Discr}{{\operatorname{\mathsf{Discr--}}}}
\newcommand{\Add}{\mathsf{Add}}
\newcommand{\Prod}{\mathsf{Prod}}
\newcommand{\proj}{\mathsf{proj}}
\newcommand{\cs}{\mathsf{cs}}
\newcommand{\rop}{{\mathrm{op}}}
\newcommand{\sop}{{\mathsf{op}}}
\newcommand{\cont}{{\mathrm{cont}}}
\newcommand{\Mat}{\mathfrak{Mat}}
\newcommand{\A}{\mathfrak A}
\newcommand{\C}{\mathfrak C}
\newcommand{\D}{\mathfrak D}
\newcommand{\HH}{\mathfrak H}
\newcommand{\I}{\mathfrak I}
\newcommand{\J}{\mathfrak J}
\renewcommand{\P}{\mathfrak P}
\newcommand{\Q}{\mathfrak Q}
\newcommand{\R}{\mathfrak R}
\renewcommand{\S}{\mathfrak S}
\newcommand{\U}{\mathfrak U}
\newcommand{\sA}{\mathsf A}
\newcommand{\scrM}{\mathscr M}
\newcommand{\scrN}{\mathscr N}
\newcommand{\boZ}{\mathbb Z}
\newcommand{\Section}[1]{\bigskip\section{#1}\medskip}
\newcommand{\SectionSpecialChar}[2]{\bigskip\section{\texorpdfstring{#1}{#2}}\medskip}
\theoremstyle{plain}
\newtheorem{thm}{Theorem}[section]
\newtheorem{lem}[thm]{Lemma}
\newtheorem{prop}[thm]{Proposition}
\newtheorem{cor}[thm]{Corollary}
\theoremstyle{definition}
\newtheorem{ex}[thm]{Example}
\newtheorem{rem}[thm]{Remark}
\newtheorem{rems}[thm]{Remarks}
\begin{document}

\title{Topologically semiperfect topological rings}

\author{Leonid Positselski}

\address{Leonid Positselski, Institute of Mathematics
of the Czech Academy of Sciences \\
\v Zitn\'a~25, 115~67 Prague~1 \\ Czech Republic} 

\email{positselski@math.cas.cz}

\author{Jan \v S\v tov\'\i\v cek}

\address{Jan {\v S}{\v{t}}ov{\'{\i}}{\v{c}}ek, Charles University,
Faculty of Mathematics and Physics, Department of Algebra,
Sokolovsk\'a 83, 186 75 Praha, Czech Republic}

\email{stovicek@karlin.mff.cuni.cz}

\begin{abstract}
 We define topologically semiperfect (complete, separated, right linear)
topological rings and characterize them by equivalent conditions.
 We show that the endomorphism ring of a module, endowed with
the finite topology, is topologically semiperfect if and only if
the module is decomposable as an (infinite) direct sum of modules
with local endomorphism rings.
 Then we study structural properties of topologically semiperfect
topological rings and prove that their topological Jacobson radicals
are strongly closed and the related topological quotient rings are
topologically semisimple.
 For the endomorphism ring of a direct sum of modules with local
endomorphism rings, the topological Jacobson radical is described
explicitly as the set of all matrices of nonisomorphisms.
 Furthermore, we prove that, over a topologically semiperfect topological
ring, all finitely generated discrete modules have projective covers in
the category of modules, while all lattice-finite contramodules have
projective covers in both the categories of modules and contramodules.
 We also show that the topological Jacobson radical of a 
topologically semiperfect topological ring is equal to the closure of
the abstract Jacobson radical, and present a counterexample demonstrating
that the topological Jacobson radical can be strictly larger than
the abstract one.
 Finally, we discuss the problem of lifting idempotents modulo
the topological Jacobson radical and the structure of projective contramodules
for topologically semiperfect topological rings.
\end{abstract}

\maketitle

\tableofcontents

\section{Introduction}
\medskip

 Topological algebra generally, and topological ring theory in particular,
is an old discipline, going back to the applications of the metric or
topological fields of real and $p$\+adic numbers in algebraic number
theory, Jacobson's density theorem, etc.
 In recent years, it received new impetus from the discovery of
the abelian categories of contramodules over topological rings
(see the long overview paper~\cite{Prev}).

 From our point of view, the natural generality is achieved in
the setting of complete, separated, \emph{right linear} topological
rings~$\R$.
 The latter condition means that open right ideals form a base of
neighborhoods of zero in~$\R$.
 To such a topological ring one assigns two abelian categories:
the Grothendieck abelian category of \emph{discrete right\/
$\R$\+modules}, and the locally presentable abelian category of 
\emph{left\/ $\R$\+contramodules} (which has enough projective objects).
 Having such abelian module categories at one's disposal, one can
start on the program of extending the classical concepts of ring theory
to the topological realm.

 The first step, viz., an infinite-dimensional version of
the Wedderburn--Artin theorem in the context of topological rings,
was suggested in the paper~\cite{IMR}.
 The authors of~\cite{IMR} preferred pseudo-compact modules to discrete
ones, and they did not consider contramodules; so they only had modules
on the one side over their topological rings.
 The topological Wedderburn--Artin theorem of~\cite[Theorem~3.10]{IMR}
was augmented in~\cite[Section~6]{PS3}, where contramodules were
thrown into the game.
 Thus appeared the concept of a \emph{topologically semisimple}
topological ring.

 The next step was made in the present authors' paper~\cite{PS3}
(with a preceding attempt in~\cite{Pproperf} and a further development
in~\cite{BPS}), where we defined the notion of a \emph{topologically
left perfect} (right linear) topological ring.
 Some of the properties characterizing perfect rings in
the classical~\cite[Theorem~P]{Bas} were proved to remain equivalent
in the topological realm in~\cite[Section~14]{PS3}, while another
characterization of perfect rings from~\cite[Theorem~P]{Bas} was shown
to be equivalent to the rest of the bunch in the topological context if
and only if a certain open problem~\cite[Question~1 in Section~2]{AS}
has positive answer.

 The topological algebra of right linear topological rings is
intertwined with module theory, particularly with the theory of
\emph{direct sum decompositions} of modules.
 The endomorphism ring of any module over an associative ring is
naturally endowed with the \emph{finite topology}, making it a complete,
separated right linear topological ring; and conversely, any complete,
separated right linear topological ring arises as the endomorphism ring
of a module~\cite[Section~4]{PS3}.
 Fundamentally, the connection between decomposition theory of a module
over a ring and contramodule theory over its endomorphism ring is
provided by the result of~\cite[Theorem~7.1]{PS1}.

 The topologically semisimple topological rings are the endomorphism
rings of (infinitely generated) semisimple modules~\cite[Section~6]{PS3}.
 The topologically perfect topological rings are the endomorphism rings
of \emph{modules with perfect decomposition}~\cite[Section~10]{PS3}.
 The interaction is mutually beneficial, providing applications in both
directions: the proof of the characterization of topologically
perfect topological rings in~\cite[Theorem~14.1]{PS3} is based on
known results in the theory of direct sum decompositions of
modules~\cite[Theorem~1.4]{AS}, and conversely, the positive answer
to~\cite[Question~1 in Section~2]{AS} for countably generated modules
obtained in~\cite[Theorem~12.2]{PS3} uses topological ring theory.

\smallskip

 In this paper, we make a further step on the road of generalization
and define \emph{topologically semiperfect} topological rings.
 The notion of a topologically semiperfect topological ring is
the topological algebra counterpart of the classical concept of
a module decomposable into a direct sum of modules \emph{with local
endomorphism rings} (such modules form the natural generality for
the Krull--Schmidt--Remak--Azumaya theorem about uniqueness of
direct sum decompositions~\cite[Theorem~12.6]{AF},
\cite[Theorem~2.12]{Fac}).
 Specifically, a module over a ring is decomposable as a (possibly
infinite) direct sum of modules with local endomorphism rings if
and only if its endomorphism ring, endowed with the finite topology,
is topologically semiperfect.
 This is a particular case of our
Proposition~\ref{semiperfect-decomposition}.

 The classical ring-theoretic notion of a semiperfect ring, unlike
that of a perfect ring, is left-right symmetric.
 There are several equivalent definitions of semiperfect rings,
including those in terms of the ring's structural properties
(the quotient ring by the Jacobson radical is semisimple and idempotents
lift modulo the Jacobson radical), those with representation-theoretic
flavor (the regular module is a direct sum of modules with local
endomorphism rings), and in terms of existence of projective covers
for all simple modules, or equivalently, for all finitely generated
modules.

 As it was the case for the theory of topological perfectness developed
in~\cite{PS3}, in the context of semiperfectness we have been likewise
unable to reproduce, in our topological setting, the full picture of 
equivalent characterizations known for discrete rings in general.
 But we have managed to prove some equivalences and some implications
and obtained an almost full picture for topological rings with
countable bases of neighborhoods of zero
(see Theorems~\ref{top-semiperfectnes-via-proj-covers-thm}
and~\ref{projective-contramodules-for-countable-basis-thm}).

 First of all, in the topological setting the left-right symmetry is
lost from the outset: the topological rings we are interested in are
\emph{right linear} or, if one prefers, one can switch the sides and
make them left linear, but not both (the class of two-sided linear
topological rings is too narrow to include our intended examples of
the topological endomorphism rings).
 We define topologically semiperfect topological rings by the direct
sum decomposition property of their left regular contramodule, or
equivalently, direct product decomposition property of the right
regular topological module, and show that this is equivalent to
existence of an infinite complete zero-convergent family of orthogonal
local idempotents (Theorem~\ref{topologically-semiperfect-rings}).

 Then we prove that the topological semiperfectness implies
topological semisimplicity of the topological quotient ring by
the topological Jacobson radical
(Theorem~\ref{topologically-semiperfect-structural-properties})
and existence of projective covers for finitely generated discrete right modules and lattice-finite left contramodules,
i.e.\ contramodules which are finite in their lattice of subcontramodules
(Propositions~\ref{discrete-fg-projective-covers}
and~\ref{lattice-finite-contramodules-projective-covers}).
When the topological ring has a countable base of neighborhoods of zero,
the existence of projective covers of simple, or equivalently finitely
generated, discrete right modules actually characterizes topological
semiperfectness (Theorem~\ref{top-semiperfectnes-via-proj-covers-thm}).
 In Section~\ref{lifting-idempotents-secn}, we
discuss the problem of lifting idempotents modulo the topological
Jacobson radical of a topologically semiperfect topological ring.
 We show that finite families of orthogonal primitive idempotents can be
lifted (Proposition~\ref{lifting-orthogonal-primitive-idempotents}),
and present an example illustrating the difficulties one runs into
when one attempts to lift infinite zero-convergent families of
orthogonal idempotents.
Finally, in the last Section~\ref{structure-of-projective-contramodules-secn} we show that over a topologically semiperfect topological ring with countable base of neighborhoods of zero, all projective contramodules decompose as coproducts of projective covers of simple contramodules.
This can be viewed as a topological algebraic manifestation of the Crawley--J\o{}nsson--Warfield structure theorem on direct summands of direct sums of countably generated modules with local endomorphism rings \cite[Theorem 26.5]{AF}.
We do not known whether the countability assumption on the base of neighborhoods of zero can be dropped here,
which corresponds to the open problem of whether the countable generatedness can be dropped in the Crawley--J\o{}nsson--Warfield theorem.

The anonymous referee kindly made us aware of the paper~\cite{Gre} by Enrico Gregorio, who also studied right linear (and often complete and separated) topological rings and defined a notion of topological semiperfectness. In fact, he introduced it in two different flavors: topologically t-semiperfect topological rings and topologically d-semiperfect topological rings.
However, except for giving fundamentals about the topological Jacobson radical in~\cite[Section~1]{Gre} and for \cite[Proposition 2.12 and Theorem 2.14]{Gre} serving as inspiration for our Section~\ref{surjective-homomorphisms-secn}, the present paper and \cite{Gre} have relatively little in common.
The notion of t-semiperfectness from~\cite{Gre} is (at least apparently, we are presently not aware of any example) weaker than our topological semiperfectness and it seems to be difficult to build a theory on that definition,
while d-semiperfectness is on the other hand very restrictive.

\smallskip

 Some words are in order about the topological Jacobson radical.
 There are two notions of a Jacobson radical for a right linear
topological ring~$\R$: the classical abstract Jacobson radical~$H$
(which ignores the topology) and the topological Jacobson radical~$\HH$.
 The topological Jacobson radical is defined as the intersection of
all the \emph{open} maximal right ideals in~$\R$ (so $H$ is obviously
a subset of~$\HH$).
 The topological Jacobson radical was first defined by Gregorio in~\cite[Section~1]{Gre}.
 Much later the concept was rediscovered in~\cite{IMR}, and it was
shown in~\cite[Theorem~3.8(3)]{IMR} that for a \emph{two-sided linear}
topological ring the two Jacobson radicals coincide (cf.\
the discussion in~\cite[Section~7]{Pproperf}).
 For a topologically left perfect (right linear) topological ring,
the two Jacobson radicals also coincide~\cite[Lemma~10.3]{PS3}.

 In this paper, we show that the topological Jacobson radical of
a topologically semiperfect (right linear) topological ring is
equal to the topological closure of the abstract Jacobson radical
(Corollary~\ref{Jacobson-closure}), but the abstract Jacobson
radical need not be closed in the topology, and accordingly,
the topological Jacobson radical can be strictly larger than
the abstract one
(Example~\ref{topological-jacobson-orthogonalization-counterex}(1)).
 This phenomenon is related to the difficulties we encounter
with the problem of lifting of infinite families of orthogonal
idempotents, as we explain in
Example~\ref{topological-jacobson-orthogonalization-counterex}(2).

\subsection*{Acknowledgment}
 We would like to thank the anonymous referee for careful reading of
the manuscript, and particularly for suggesting the most relevant
reference~\cite{Gre}.
 This research is supported by GA\v CR project 20-13778S.
 The first-named author is also supported by research plan
RVO:~67985840.

\Section{Preliminaries} \label{prelim-secn}

\subsection{Right linear topological rings} \label{topological-rings-subsecn}

 All \emph{rings} in this paper are presumed to be associative and
unital, and all modules are presumed to be unital.
 All the \emph{ring homomorphisms} take the unit to the unit.
 As usual, the abelian categories of right and left modules over
a ring $R$ are denoted by $\Modr R$ and $R\Modl$, respectively.

 A topological abelian group $A$ is said to have \emph{linear topology}
if open subgroups form a base of neighborhoods of zero in~$A$.
 We recall that the \emph{completion} $\A$ of a topological abelian
group $A$ with linear topology is defined as the projective limit
$\A=\varprojlim_{U\subset A}A/U$, where $U$ ranges over all the open
subgroups of $A$ (or equivalently, all the open subgroups belonging to
a chosen base of neighborhoods of zero consisting of open subgroups).
 The topological abelian group $A$ is said to be \emph{separated} if
the natural completion map $\lambda_A\:A\rarrow\A$ is injective, and
$A$ is said to be \emph{complete} if the map $\lambda_A$ is surjective.
 We refer to the paper~\cite{Pextop} for an extensive background on
topological abelian groups with linear topology.

 In the rest of this paper, all topological abelian groups are
presumed to have linear topology, and to be complete and separated.
 A topological ring is said to be \emph{right linear} if it has a base
of neighborhoods of zero consisting of open right ideals.
 All topological rings in this paper are presumed to be complete,
separated, and right linear.
 We refer to~\cite[Section~2]{Pproperf} for the background material
on topological rings.

\subsection{Discrete and complete separated modules} \label{discrete-and-cs-modules-subsecn}

 Let $\R$ be a topological ring.
 A right $\R$\+module $M$ is said to be \emph{discrete} if
the annihilator of any element of $M$ is an open right ideal in~$\R$;
equivalently, this means that the action map $M\times\R\rarrow M$ is
continuous in the discrete topology of $M$ and the given topology
on~$\R$.
 The full subcategory of discrete right $\R$\+modules $\Discr\R\subset
\Modr\R$ is a hereditary pretorsion class in $\Modr\R$ and
a Grothendieck abelian category~\cite[Section~2.4]{Pproperf}.

We will also consider a bigger category $\Modcs\R\supset\Discr\R$ of complete separated topological $\R$-modules with continuous $\R$-module homomorphisms. The objects are modules $\scrM\in\Modr\R$ equipped with a linear topology such that open $\R$-submodules form a base of neighborhoods of zero in~$\scrM$ and that the action map $\scrM\times\R\rarrow\scrM$ is continuous. This category is typically not abelian, but it is additive, idempotent-complete and has products.

Indeed, it is easy to show that a difference between continuous homomorphisms of topological right $\R$\+modules is again continuous.
It is standard that, given an idempotent continuous homomorphism $e\:\scrM\rarrow\scrM$, one has $\scrM \simeq e\scrM\oplus(1-e)\scrM$ in $\Modcs\R$, where $e\scrM, (1-e)\scrM$ carry the subspace topologies, which coincide with the quotient topologies (see~\cite[Example 3.6(2)]{PS3}). Finally, a product $\prod_{x\in X}\scrM_x$ of a family of topological modules from $\Modcs\R$, equipped with the product topology, is a product in $\Modcs\R$.

Given $\scrM,\scrN\in\Modcs\R$, we will denote the group of continuous $\R$\+module homomorphisms by $\Hom_\R^\cont(\scrM,\scrN)$.
Note that $\R$ is naturally an object of $\Modcs\R$, as are all summands $e\R$, where $e=e^2\in\R$ is an idempotent element (in this case, the left multiplication $e\cdot-$ on $\R$ is always continuous).

\subsection{Contramodules} \label{contramodules-subsecn}

 Given an abelian group $A$ and a set $X$, we use $A[X]=A^{(X)}$ as
a notation for the direct sum of $X$ copies of~$A$.
 The elements of $A[X]$ are interpreted as finite formal linear
combinations of elements of $X$ with the coefficients in~$A$.
 For a topological abelian group $\A$ and a set $X$, we put
$\A[[X]]=\varprojlim_{\U\subset\A}(\A/\U)[X]$, where the projective
limit is taken over all the open subgroups $\U\subset\A$ (or over
all the open subgroups belonging to a chosen base of neighborhoods
of zero consisting of open subgroups in~$\A$).
 The elements of $\A[[X]]$ are interpreted as infinite formal linear
combinations $\sum_{x\in X}a_xx$, where $(a_x\in\A)_{x\in X}$ is
a family of elements converging to zero in the topology of~$\A$.
 The latter condition means that, for every open subgroup $\U\subset\A$,
one has $a_x\in\U$ for all but a finite subset of indices $x\in X$
\cite[Section~2.5]{Pproperf}.

 For any (complete, separated) topological abelian group $\A$, the rule
assigning the set (or abelian group) $\A[[X]]$ to a set $X$ is
a covariant functor on the category of sets.
 Given a map of sets $f\:X\rarrow Y$, the induced map $\A[[f]]\:
\A[[X]]\rarrow\A[[Y]]$ takes a formal linear combination
$\sum_{x\in X}a_xx$ to the formal linear combination
$\sum_{y\in Y}b_yy$, where $b_y=\sum_{x\in X}^{f(x)=y}a_x$ for
every $y\in Y$.
 The infinite sum here is understood as the limit of finite partial
sums in the topology of~$\A$.

 For any (complete, separated, right linear) topological ring $\R$,
the functor $X\longmapsto\R[[X]]$ has a natural structure of
a \emph{monad} on the category of sets.
 This means that there are natural transformations of monad unit
$\epsilon_X\:X\rarrow\R[[X]]$ and monad multiplication
$\phi_X\:\R[[\R[[X]]]]\rarrow\R[[X]]$, defined for all sets $X$ and
satisfying the associativity and unitality axioms in the definition
of a monad.
 Here the monad unit $X\rarrow\R[[X]]$ is the ``point measure'' map
taking an element $x\in X$ to the formal linear combination
$\sum_{y\in Y}r_yy$ with $r_x=1$ and $r_y=0$ for $y\ne x$.
 The monad multiplication $\R[[\R[[X]]]]\rarrow\R[[X]]$ is
the ``opening of parentheses'' map assigning a formal linear combination
to a formal linear combination of formal linear combinations.
 Its construction uses the multiplication in $\R$ and the infinite
sums computed as the topological limits of finite partial
sums~\cite[Section~2.6]{Pproperf}, \cite[Sections~6.1--6.2]{PS1}.

 A \emph{left contramodule} over a topological ring $\R$ is
a module over the monad $X\longmapsto\R[[X]]$.
 Here we use the terminology \emph{modules over a monad} for what are
generally known as ``algebras over a monad''; the fact that contramodule
categories are additive explains this terminological preference.
 Specifically, this means that a left $\R$\+contramodule $\C$ is
a set endowed with a \emph{left contraaction map} $\pi_\C\:\R[[\C]]
\rarrow\C$ satisfying the associativity and unitality axioms of
an algebra/module over a monad.
 These axioms require that the two compositions 
$$
 \R[[\R[[\C]]]]\,\rightrightarrows\R[[\C]]\rarrow\C
$$
must be equal to each other, $\pi_\C\circ\R[[\pi_\C]]=
\pi_\C\circ\phi_\C$, while the composition
$$
 \C\rarrow\R[[\C]]\rarrow\C
$$
must be equal to the identity map, $\pi_\C\circ\epsilon_\C=\id_\C$. In particular, given any set $X$, a collection of elements $(c_x\in\C)_{x\in X}$ (which we can interpret as a map $f\:X\rarrow\C$) and a zero-convergent family $\sum_{x\in X}r_xx\in\R[[X]]$, we denote the image of $\sum_{x\in X}r_xx$ under the composition $\pi_\C\circ\R[[f]]$ simply by $\sum_{x\in X}r_xc_x\in\C$. In this sense, the contraaction informally just defines a good way to evaluate infinite formal $\R$\+linear combinations with zero-convergent families of coefficients in $\C$.

 In particular, in the case of a discrete ring $R$, when one has
$R[[X]]=R[X]$, the definition above becomes a fancy way to define
the usual left $R$\+modules~\cite[Section~6.1]{PS1}.
 For a topological ring $\R$ and a left $\R$\+contramodule $\C$,
one can compose the contraaction map $\R[[\C]]\rarrow\C$ with
the natural inclusion $\R[\C]\hookrightarrow\R[[\C]]$, producing a map
$\R[\C]\rarrow\C$ defining a left $\R$\+module structure on~$\C$.
 So any left $\R$\+contramodule has an underlying left $\R$\+module
structure, and we obtain a natural forgetful functor $\R\Contra
\rarrow\R\Modl$ from the category of left $\R$\+contramodules to
the category of left $\R$\+modules.
 The category of left $\R$\+contramodules is abelian with infinite
products and coproducts, and the forgetful functor $\R\Contra\rarrow
\R\Modl$ is exact.
 The latter functor also preserves infinite products, but \emph{not}
coproducts~\cite[Section~2.7]{Pproperf}, \cite[Section~6.2]{PS1}.
 Given a family of $\R$\+contramodules $\C_\alpha$, we denote by
$\coprod_\alpha\C_\alpha=\coprod_\alpha^{\R\Contra}\C_\alpha$
the coproduct of the objects $\C_\alpha$ in the category $\R\Contra$.

 For any left $\R$\+contramodules $\C$ and $\D$, the abelian group
of morphisms $\C\rarrow\D$ in $\R\Contra$ is denoted by
$\Hom^\R(\C,\D)$.
 For any set $X$, the set $\R[[X]]$ has a natural structure of left
$\R$\+contramodule with the contraaction map $\pi_{\R[[X]]}=\phi_X$.
 The $\R$\+contramodules of this form are called the \emph{free}
left $\R$\+contramodules.
 For any left $\R$\+contramodule $\C$, the abelian group of morphisms
$\Hom^\R(\R[[X]],\C)$ is isomorphic to the group of all maps of sets
$X\rarrow\C$; an arbitrary such map of sets can be uniquely extended
to a morphism from the free contramodule.
 There are enough projective objects in the abelian category
$\R\Contra$; an $\R$\+contramodule is projective if and only if it is
a direct summand of a free $\R$\+contramodule.

 For any ring $R$, left $R$\+module $M$, and additive subgroup
$A\subset R$ we denote by $AM=A\cdot M\subset M$ the subgroup
spanned by the elements $am$, \,$a\in A$, \,$m\in M$ (as usual).
 Now let $\R$ be a topological ring, $\C$ be a left $\R$\+contramodule,
and $\A\subset\R$ be a closed additive subgroup.
 Then the subgroup $\A\tim\C\subset\C$ is defined as the image of
the the composition of the natural inclusion $\A[[\C]]\hookrightarrow
\R[[\C]]$ and the contraaction map $\R[[\C]]\rarrow\C$.
 Clearly, one has $\A\C\subset\A\tim\C$ \,\cite[Section~2.10]{Pproperf}.

\subsection{Finiteness conditions on contramodules} \label{finiteness-subsecn}

 Notice that $\R[[X]]=\R[X]$ for any \emph{finite} set~$X$.
 An $\R$\+contramodule is said to be \emph{finitely generated} if it is
a quotient contramodule of a free $\R$\+contramodule $\R[[X]]$ spanned
by a finite set~$X$.
 An $\R$\+contramodule is said to be \emph{finitely presented} if it
can be presented as the cokernel of a morphism of free
$\R$\+contramodules $\R[[Y]]\rarrow\R[[X]]$ with finite sets $X$
and~$Y$.
 The forgetful functor $\R\Contra\rarrow\R\Modl$ restricts to
an equivalence between the full subcategory of finitely presented
$\R$\+contramodules in $\R\Contra$ and the full subcategory of finitely
presented $\R$\+modules in $\R\Modl$.
 In particular, the same functor provides an equivalence between
the full subcategories of finitely generated projective left
$\R$\+contramodules and finitely generated projective left
$\R$\+modules~\cite[Section~10]{PPT}.

Another finiteness property one can impose on an $\R$\+contramodule $\C$ can be defined in terms of the lattice of subcontramodules. As in any complete and cocomplete abelian category, subcontramodules of $\C$ form a complete lattice. More explicitly, if $\C_x\subset\C$ is a family of subcontramodules of $\C$ indexed by a set $X$ (i.e.\ the contraaction map $\R[[\C]]\rarrow\C$ restricts to $\R[[\C_x]]\rarrow\C_x$ for each $x\in X$), then the meet is just the set-theoretic intersection $\bigcap_{x\in X}\C_x\subset\C$, whereas the join is the image of the canonical map $\coprod_{x\in X}\C_x\rarrow\C$. In analogy with the notation for $\R$\+modules, we will denote the join by $\sum_{x\in X}\C_x$, and using the contraaction on $\C$, we can give a direct description in terms of elements,
\[ \sum_{x\in X}\C_x = \Biggl\{\, \sum_{x\in X}r_xc_x \;\Big|\; c_x\in\C_x \textrm{ and } \sum_{x\in X}r_xx\in\R[[X]] \,\Biggr\}. \]
If $X$ is finite, the situation is easier as $\R[[X]]=\R[X]$. In that case, $\sum_{x\in X}\C_x$ is computed as the finite sum of the underlying $\R$\+modules.

Now we say that a contramodule $\C$ is \emph{lattice-finite} if for any family of subcontramodules $\C_x\subset\C$, $x\in X$, such that $\sum_{x\in X}\C_x=\C$, there exists a finite subset $F\subset X$ such that $\sum_{x\in F}\C_x=\C$. Note that any lattice-finite $\R$\+contramodule $\C$ is finitely generated as the contraaction map $\coprod_{c\in\C}\R=\R[[\C]]\rarrow\C$ is always a surjective homomorphism and by lattice-finiteness there exists a finite subset $F\subset\C$ such that the restriction to $\R[[F]]\rarrow\C$ is still surjective. If $\R$ is discrete (so that $\R\Contra=\R\Modl$), lattice-finiteness is well-known to be equivalent to being finitely generated, but for non-discrete topological rings, the converse implication might fail. In fact, a topological ring $\R$ itself is always finitely generated as an $\R$\+contramodule, but may not be lattice-finite. For example, this is the case if $\R$ has an infinite coproduct decomposition $\R=\coprod_{x\in X}\P_x$ in $\R\Contra$ (see Lemma~\ref{lattice-finite-proj-lem} below).

\subsection{Topologically semisimple and topologically perfect topological rings} \label{top-ss-and-perfect-subsecn}

 For any ring $R$, we denote by $H(R)\subset R$ the Jacobson radical
of~$R$.
 Given a topological ring $\R$, the \emph{topological Jacobson
radical} $\HH(\R)$ is defined as the intersection of all the open
maximal right ideals in~$\R$.
 The topological Jacobson radical $\HH(\R)$ is a closed two-sided ideal
in $\R$ containing the abstract Jacobson radical~$H(\R)$;
see~\cite[Section~1]{Gre}, \cite[Section~3.B]{IMR},
\cite[Section~7]{Pproperf}, or~\cite[Section~3]{BPS} for a discussion.

 By a \emph{semisimple} ring $S$ we mean a ring whose category of left
(equivalently, right) modules is semisimple; in other words, $S$ is
a semisimple left (equivalently, right) Artinian ring.
 Such rings are known as \emph{classically semisimple}.
 A topological ring $\S$ is called \emph{topologically semisimple} if
the abelian category of discrete right $\S$\+modules $\Discr\S$ is split
(equivalently, semisimple); this holds if and only if the abelian
category of left $\S$\+contramodules $\S\Contra$ is split (equivalently,
semisimple~\cite[Sections~2 and~6]{PS3}).
 The topologically semisimple topological rings are explicitly described
as the infinite topological products of the topological rings of
row-finite infinite matrices over skew-field~\cite[Theorem~3.10]{IMR},
\cite[Theorem~6.2]{PS3}.

 We refer to~\cite[Section~11]{Pextop}
and~\cite[Sections~2.11--2.12]{Pproperf} for a detailed
discussion of \emph{strongly closed subgroups} in topological
abelian groups and (in particular) strongly closed two-sided ideals
in topological rings.
 The point is that the quotient group of a topological group $\A$ by
a closed subgroup $\HH$ is always separated, but \emph{not} always
complete in the quotient topology.
 Even when the quotient group $\Q=\A/\HH$ is complete, the induced map
$\A[[X]]\rarrow\Q[[X]]$ need \emph{not} be surjective for an arbitrary
set~$X$ (in other words, the problem of lifting a zero-convergent
family of elements in $\Q$ to a zero-convergent family of elements in
$\A$ is not always solvable).
 When the quotient group $\Q=\A/\HH$ is complete and the map
$\A[[X]]\rarrow\Q[[X]]$ is surjective for all sets $X$, one says
that the closed subgroup $\HH\subset\A$ is strongly closed.

 We recall that an ideal $H$ in a ring $R$ is said to be \emph{left
T\+nilpotent} if, for every sequence of elements $a_1$, $a_2$,
$a_3$,~\dots~$\in H$ there exists an integer $m\ge1$ such that
the product $a_1\dotsm a_m$ vanishes in~$R$.
 An ideal $\HH$ in a topological ring $\R$ is said to be
\emph{topologically left T\+nilpotent} if, for every sequence of
elements $a_1$, $a_2$, $a_3$,~\dots~$\in\HH$, the sequence of
products $a_1$, $a_1a_2$, $a_1a_2a_3$,~\dots, $a_1\dotsm a_m$,~\dots\
converges to zero in the topology of~$\R$
\,\cite[Section~6]{Pproperf}, \cite[Section~7]{PS3}.
 A topological ring $\R$ is said to be \emph{topologically left
perfect} if its topological Jacobson radical $\HH$ is topologically left
T\+nilpotent and strongly closed in $\R$, and the topological
quotient ring $\S=\R/\HH$ is topologically
semisimple~\cite[Sections~10 and~14]{PS3}.

 Given a continuous homomorphism of topological rings $f\:\R\rarrow\S$,
any left $\S$\+contramodule $\D$ can be endowed with the left
$\R$\+contramodule structure induced via~$f$.
 The resulting exact, faithful, product-preserving functor taking
a left $\S$\+contramodule $\D$ to the left $\R$\+contramodule $\D$
is denoted by $f_\sharp\:\S\Contra\rarrow\R\Contra$ and called
the \emph{contrarestriction of scalars} with respect to~$f$.
 The functor~$f_\sharp$ has a left adjoint functor $f^\sharp\:
\R\Contra\rarrow\S\Contra$, which is called the \emph{contraextension
of scalars} \cite[Section~2.9]{Pproperf}.

 Let $\HH$ be a strongly closed two-sided ideal in a topological
ring $\R$, and let $\S=\R/\HH$ be the quotient ring endowed with
the quotient topology.
 Then the functor of contrarestriction of scalars $\S\Contra\rarrow
\R\Contra$ is fully faithful, and its essential image consists of
all the left $\R$\+contramodules $\D$ such that $\HH\tim\D=0$.
 The functor of contraextension of scalars $\R\Contra\rarrow
\S\Contra$ assigns to a left $\R$\+contramodule $\C$ the quotient
group $\C/(\HH\tim\C)$ endowed with the naturally induced
left $\S$\+contramodule structure~\cite[Section~2.12]{Pproperf}.

\subsection{Topologically agreeable additive categories} \label{agreeable-subsecn}

 An additive category $\sA$ with set-indexed coproducts is called
\emph{agreeable} if, for any object $M$ and family of objects
$(N_x\in\sA)_{x\in X}$, the natural map $\Hom_\sA(M,\>
\coprod_{x\in X}N_x)\allowbreak\rarrow\prod_{x\in X}\Hom_\sA(M,N_x)$
is injective.
 In particular, an additive category with products and coproducts is
agreeable if and only if the natural map from the coproduct to
the product $\coprod_{x\in X}N_x\rarrow\prod_{x\in X}N_x$ is
a monomorphism for every family of objects $N_x\in\sA$.
 A family of morphisms $f_x\:M\rarrow N_x$ in an agreeable category
$\sA$ is said to be \emph{summable} if it arises from a morphism
$f\:M\rarrow\coprod_{x\in X}N_x$ \,\cite{Cor}.

 A (complete, separated) \emph{right topological} additive category
$\sA$ is an additive category in which all the groups of morphisms
$\Hom_\sA(M,N)$ are endowed with complete, separated topologies
such that the composition maps are continuous and open
$\Hom_\sA(N,N)$\+submodules form a base of neighborhoods of zero
in $\Hom_\sA(M,N)$ for every pair of objects $M$, $N\in\sA$.
 One can show that any zero-convergent family of morphisms
$f_x\in\Hom_\sA(M,N)$ in a right topological additive category
is summable.
 A \emph{topologically agreeable} category $\sA$ is an agreeable
additive category endowed with a right topological additive category
structure for which the converse implication holds: any summable
family of morphisms $f_x\:M\rarrow N$ converges to zero in
the topology of $\Hom_\sA(M,N)$ \,\cite[Section~3]{PS3}.

 In particular, the abelian category of modules over an associative
ring $A$ can be endowed with a right topological category structure,
making it a topologically agreeable category, in several alternative
ways described in~\cite[Examples~3.7--3.8 and~3.10]{PS3}.
 Among these, the most natural approach is the one using
the \emph{finite topology} on the group of morphisms $\Hom_A(M,N)$
for any two left $A$\+modules $M$ and~$N$.
 A base of neighborhoods of zero in the finite topology consists
of the annihilators of finitely generated $A$\+submodules (or
equivalently, of finite subsets of elements) in~$M$.

 In particular, let $M$ be a left $A$\+module.
 Consider the ring $\R=\End_A(M)^\rop$ opposite to the endomorphism
ring of the $A$\+module~$M$; so $\R$ acts in $M$ on the right,
making $M$ an $A$\+$\R$\+bimodule.
 Then $\R$ is a complete, separated right linear topological ring
in the finite topology (where, once again, the annihilators of
finitely generated $A$\+submodules in $M$ form a base of neighborhoods
of zero)~\cite[Section~7.1]{PS1}.

 Before we finish these preliminaries, let us make a comment on our
notational conventions.
 Following the tradition of the preceding papers~\cite{Prev,PS1,
Pproperf,PS3,BPS} (going back to the book~\cite{Psemi}), we prefer to
deal with \emph{left} contramodules and discrete \emph{right} modules.
 Consequently, we have to consider \emph{right} linear topological
rings and produce them as the \emph{right} endomorphism rings
$\End_A(M)^\rop$ of \emph{left} modules~${}_AM$.
 For this reason, we will occasionally need a notation for morphisms
acting on objects (typically, modules) on the right.
 When we need to emphasize this aspect, we will denote our morphisms
by left-pointing arrows and write the morphism on the right-hand side
of the arrow; so $M\larrow M\,:\!r$ denotes the endomorphism of
a module $M$ corresponding to an element $r\in\R=\End_A(M)^\rop$.

\Section{Pontryagin Duality for Projective Contramodules} \label{pontryagin-secn}

Given a complete separated right linear topological ring $\R$, we establish in this section a duality between the category $\R\Contra_\proj$ of projective left $\R$\+contramodules and a suitable category of complete separated topological right $\R$-modules. This generalizes the well-known duality between finitely generated left and right projective modules over a discrete ring $R$, and allows us to translate certain properties of projective left $\R$-contramodules to corresponding properties of topological right $\R$\+modules and vice versa.

 In order to construct the corresponding functors, recall that $\R\Contra_\proj$ is a topologically agreeable
additive category~\cite[Remark~3.12]{PS3}; so the groups of morphisms
in it are endowed with natural topologies (see the discussion
in Subsection~\ref{agreeable-subsecn}).
 Thus, we have an additive functor $\Hom^\R({-},\R)\:(\R\Contra_\proj)^\sop\rarrow\Modcs\R$.
 Furthermore, the functor $\Hom^\R({-},\R)$ takes coproducts
of projective $\R$\+contramodules to topological products of
the complete separated topological right $\R$\+modules, by~\cite[Lemma~10.7]{PS3}, and it sends $\R$ viewed as a free left contramodule with one generator to $\R$ viewed as a complete separated topological right $\R$\+module.

Conversely, given any $M\in\Modr\R$, the group of morphisms $\Hom_\R(M,\R)$ has a natural structure of a left $\R$\+contramodule.
More in detail, suppose we have a collection of
homomorphisms $f_x\:M\rarrow\R$, $x\in X$, and a zero-convergent family $\sum_{x\in X}r_xx\in\R[[X]]$. Then we define a homomorphism of $\R$\+modules $\sum_{x\in X}r_xf_x\:M\rarrow\R$ by the formula
\[
 \Biggl(\sum_{x\in X}r_xf_x\Biggr)(m) = \sum_{x\in X}r_x(f_x(m)) \in\R,
\]
where the latter sum is the topological limit of finite partial sums in $\R$.
If $\scrM\in\Modcs\R$ is a complete separated topological module and all $f_x\:\scrM\rarrow\R$ are continuous, it follows that $\sum_{x\in X}r_xf_x$ is also continuous.
 Indeed, for any open right ideal $\I\subset\R$, we find a finite subset
$F\subset X$ such that $r_x\in\I$ for each $x\in X\setminus F$.
 For each $x\in F$, there is an open right ideal $\J_x\subset\R$ such
that $r_x\J_x\subset\I$ and an open submodule $\scrN_x\subset\scrM$
such that $f_x(\scrN_x)\subset\J_x$.
 Then $\scrN=\bigcap_{x\in F}\scrN_x$ is an open submodule of $\scrM$
such that $\sum_{x\in X}r_xf_x(\scrN)\subset\I$, as required.
 Therefore, $\Hom_\R^\cont(\scrM,\R)$ is an $\R$-subcontramodule of
$\Hom_\R(\scrM,\R)$ and we have an additive functor
$\Hom_\R^\cont({-},\R)\:(\Modcs\R)^\sop\rarrow\R\Contra$.

Now we can state the duality in the following form.

\begin{thm} \label{pontryagin-thm}
Let $\R$ be a complete separated right linear topological ring and denote by $\Prod(\R)$ the full subcategory of $\Modcs\R$ formed by direct summands of products of copies of $\R$. Then there are mutually inverse equivalences of categories
\[ \Hom^\R({-},\R)\:(\R\Contra_\proj)^\sop \rightleftarrows \Prod(\R)\,:\!\Hom_\R^\cont({-},\R)^\sop. \]
\end{thm}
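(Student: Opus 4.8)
The plan is to verify that the two functors $F = \Hom^\R({-},\R)$ and $G = \Hom_\R^\cont({-},\R)$ restrict to the indicated subcategories, land in each other's domains, and that the unit and counit of the would-be adjunction are isomorphisms. First I would check that $F$ carries $\R\Contra_\proj$ into $\Prod(\R)$: a projective contramodule is a direct summand of a free one $\R[[X]]$, and by the properties recalled before the theorem (coproducts of projectives go to products, and $\R$ goes to $\R$), $F(\R[[X]]) \cong \prod_X \R$ in $\Modcs\R$, and $F$ being additive preserves the splitting, so $F(\C) \in \Prod(\R)$. Symmetrically, for the reverse direction I would first compute $G(\prod_X \R) = \Hom_\R^\cont(\prod_X\R,\R)$: a continuous homomorphism $\prod_X\R \to \R$ must vanish on some sub-product $\prod_{X\setminus F}\R$ with $F$ finite (by continuity and the product topology), whence it factors through the finite sub-sum $\R^F$ and is given by a row vector; assembling these shows $G(\prod_X\R) \cong \R[[X]]$ as left $\R$\+contramodules, with the zero-convergence condition on the row corresponding exactly to the defining condition of $\R[[X]]$. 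Again additivity of $G$ handles direct summands, so $G$ sends $\Prod(\R)$ into $\R\Contra_\proj$.

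Next I would produce the natural transformations. For $\C \in \R\Contra_\proj$ there is an evaluation map $\mathrm{ev}_\C \: \C \rarrow \Hom_\R^\cont(\Hom^\R(\C,\R),\R) = GF(\C)$, sending $c$ to the functional $f \mapsto f(c)$; one checks it is a contramodule homomorphism and continuous with respect to the natural topologies. Dually, for $\scrM \in \Prod(\R)$ there is $\mathrm{ev}_\scrM \: \scrM \rarrow \Hom^\R(\Hom_\R^\cont(\scrM,\R),\R) = FG(\scrM)$. The strategy is the standard one: verify both evaluation maps are isomorphisms on the generators $\R[[X]]$ and $\prod_X \R$ respectively — which amounts to the bookkeeping that the two ``row/column vector'' descriptions above are mutually inverse, using the associativity of the contraaction (opening of parentheses) to see that the double-dual pairing reproduces the original formal linear combination — and then extend to direct summands by naturality, since an idempotent $e$ on a generator maps to an idempotent on its double dual compatibly with evaluation, and evaluation being an iso on the generator forces it on the summand $e\cdot(\text{generator})$.

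The naturality of $\mathrm{ev}$ in both variables, together with these isomorphism checks, gives that $F$ and $G$ are quasi-inverse equivalences. I expect the main obstacle to be the careful identification $G(\prod_X \R) \cong \R[[X]]$ \emph{as contramodules} rather than merely as modules or sets: one has to check that the contraaction on $\Hom_\R^\cont({-},\R)$ defined before the theorem statement (via $(\sum r_x f_x)(m) = \sum r_x f_x(m)$, topological limit of partial sums) is transported under the row-vector description to the monad multiplication $\phi_X$ on $\R[[X]]$, i.e.\ that evaluating a zero-convergent family of rows against an element $m = (m_y)_{y}$ and summing matches opening parentheses in $\R[[X]]$ — this is where the interplay between the product topology on $\prod_X\R$, continuity, and the infinite sums in $\R$ has to be handled with some care. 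The remaining verifications (continuity of evaluation maps, compatibility of idempotents) are routine given the material in Subsections~\ref{discrete-and-cs-modules-subsecn}, \ref{contramodules-subsecn}, and~\ref{agreeable-subsecn}.
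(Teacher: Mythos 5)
Your overall architecture is sound and close to the paper's: identify $\Hom^\R(\R[[X]],\R)\simeq\R^X$ and $\Hom_\R^\cont(\R^X,\R)\simeq\R[[X]]$, check compatibility on morphisms, and pass to direct summands. (The paper organizes the morphism-level bookkeeping by identifying both $\Hom^\R(\R[[Y]],\R[[X]])$ and $\Hom_\R^\cont(\R^X,\R^Y)$ with the set $\Mat_{Y\times X}(\R)$ of row-zero-convergent matrices, acting by right and left matrix multiplication respectively, and then takes idempotent completions on both sides; your unit/counit evaluation-map formulation is an acceptable variant of the same plan.)

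There is, however, a genuine error exactly at the point you yourself flag as the crux. You claim that a continuous homomorphism $f\:\prod_{x\in X}\R\rarrow\R$ ``must vanish on some sub-product $\prod_{X\setminus F}\R$ with $F$ finite'' and hence factors through $\R^F$. This is false unless $\R$ is discrete: the kernel of $f$ need not be open; only the preimages $f^{-1}(\I)$ of open right ideals $\I\subset\R$ are open. For example, for $\R=\boZ_p$ with the $p$-adic topology and $X=\omega$, the map $(r_n)_{n}\longmapsto\sum_{n}p^nr_n$ is continuous but vanishes on no cofinite sub-product. If your claim were true, you would obtain $G\bigl(\prod_{x\in X}\R\bigr)\simeq\R[X]$ (finitely supported rows), whereas the correct answer --- the one you then assert with the phrase ``zero-convergence condition on the row'' --- is $\R[[X]]$, with genuinely infinite zero-convergent rows. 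The correct argument (the paper's Lemmas~\ref{prodR-to-discrete-lem} and~\ref{prodR-to-cs-lem}) first treats a \emph{discrete} target $N$, where the kernel is open and the factorization through a finite sub-product does hold, giving $\Hom_\R^\cont(\R^X,N)\simeq N[X]$; it then writes a complete separated target as $\varprojlim$ of its discrete quotients and passes to the inverse limit, and it is this limit that turns finite linear combinations into zero-convergent infinite ones. Without this two-step reduction your identification of $G$ on the generators, and hence the whole duality, is unsupported.
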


In order to prove the theorem, we first prove a sequence of three easy lemmas.

\begin{lem} \label{prodR-to-discrete-lem}
Let $X$ be a set and $N\in\Discr\R$ a discrete right $\R$\+module. Then the following assignment is bijective:
\begin{align*}
N[X]&\rarrow\Hom_\R^\cont(\R^X,N),\\
\sum_{x\in X}n_xx&\longmapsto
\Biggl((r_x)_{x\in X}\mapsto\sum_{x\in X}n_xr_x\Biggr).
\end{align*}
\end{lem}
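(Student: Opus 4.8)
The plan is to verify the three standard things about the displayed assignment: that it takes values in $\Hom_\R^\cont(\R^X,N)$, that it is injective, and that it is surjective. Throughout it is convenient to write $\delta_y\in\R^X$ for the element whose $y$\+th coordinate is~$1$ and all of whose other coordinates are~$0$; note that the $\delta_y$ generate the direct sum $\R^{(X)}$ as a right $\R$\+submodule of $\R^X$, and that $\R^{(X)}$ is dense in $\R^X$ for the product topology.

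For well-definedness, fix $\sum_{x\in X}n_xx\in N[X]$ and set $F=\{x:n_x\ne0\}$, which is finite, so that $(r_x)_{x\in X}\mapsto\sum_{x\in X}n_xr_x=\sum_{x\in F}n_xr_x$ is an honest finite sum, evidently additive and right $\R$\+linear. To see it is continuous it is enough, $N$ being discrete, that its kernel be open; and since $N$ is discrete, each $\I_x=\{r\in\R:n_xr=0\}$ is an open right ideal, so the open $\R$\+submodule $\{(r_x)\in\R^X:r_x\in\I_x\text{ for all }x\in F\}$ lies in the kernel. Injectivity is then immediate: evaluating the homomorphism attached to $\sum_x n_xx$ at $\delta_y$ recovers $n_y$, hence the whole family $(n_x)_{x\in X}$.

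Surjectivity is the only step with any content. Given $\phi\in\Hom_\R^\cont(\R^X,N)$, I would set $n_x=\phi(\delta_x)$. Since $N$ is discrete and $\phi$ continuous, $\phi^{-1}(0)$ is an open subgroup of $\R^X$, hence contains a basic neighborhood of~$0$ of the form $\{(r_x):r_x\in U_x\text{ for all }x\in F_0\}$ with $F_0$ finite and the $U_x$ open right ideals; for $x\notin F_0$ one has $\delta_x$ in this set, so $n_x=0$, and therefore $\sum_x n_xx\in N[X]$. It then remains to check that $\phi$ coincides with the homomorphism $\psi\:(r_x)\mapsto\sum_x n_xr_x$ it produces. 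Both $\phi$ and $\psi$ are continuous homomorphisms of right $\R$\+modules $\R^X\rarrow N$, hence so is $\phi-\psi$, and $(\phi-\psi)(\delta_x)=0$ for every $x$; since $\phi-\psi$ is right $\R$\+linear it vanishes on the submodule $\R^{(X)}$ generated by the $\delta_x$, and since $N$ is separated and $\R^{(X)}$ is dense in $\R^X$, it vanishes identically. Thus $\phi=\psi$.

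I do not anticipate a genuine obstacle; the two points that require a bit of care both occur in the surjectivity argument, and both rest on $N$ being discrete — once to ensure $\phi^{-1}(0)$ is open (which is what makes all but finitely many of the $n_x$ vanish), and once, through separatedness of $N$ and the density of $\R^{(X)}$ in $\R^X$, to pass from $\phi(\delta_x)=\psi(\delta_x)$ for all $x$ to $\phi=\psi$.
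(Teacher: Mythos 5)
Your proof is correct and rests on the same key observation as the paper's: continuity of $\phi$ into the discrete module $N$ makes $\ker\phi$ open in the product topology of $\R^X$, which forces all but finitely many of the $n_x=\phi(\delta_x)$ to vanish. The only (immaterial) difference is the final identification: the paper notes that the open kernel contains the whole submodule $\R^{X\setminus F}$, so $\phi$ factors through the finite product $\R^F$ where the claim is pure algebra, whereas you conclude via continuity of $\phi-\psi$, density of $\R^{(X)}$ in $\R^X$, and separatedness of the discrete module $N$ --- both routes work.
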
 

\begin{proof}
The injectivity being trivial, we prove that the assignment is surjective. Suppose that $f\:\R^X\rarrow N$ is a continuous homomorphism of right $\R$\+modules. Then $\ker(f)=f^{-1}(0)$ is an open submodule of $\R^X$ and, since $\R^X$ carries the product topology, there exists a finite subset $F\subset X$ such that $\R^{X\setminus F}\subset\ker(f)$. In particular, $f$ factors as $\R^X\rarrow\R^F\rarrow N$, where the first map is the canonical projection and the second map is given by a formal linear combination $\sum_{x\in F}n_xx\in N^F\subset N[X]$.
\end{proof}

When stated properly, an analogous result holds for all complete separated right $\R$\+modules rather than just discrete ones.

\begin{lem} \label{prodR-to-cs-lem}
Let $X$ be a set and $\scrN\in\Modcs\R$ a complete separated right $\R$\+module. Then the following assignment is bijective, where the infinite sum on the left is a formal linear combination whose coefficients converge to zero in $\scrN$, and the infinite sum on the right is the topological limit of finite partial sums in $\scrN$:
\begin{align*}
\scrN[[X]]&\rarrow\Hom_\R^\cont(\R^X,\scrN),\\
\sum_{x\in X}n_xx&\longmapsto
\Biggl((r_x)_{x\in X}\mapsto\sum_{x\in X}n_xr_x\Biggr).
\end{align*}
\end{lem}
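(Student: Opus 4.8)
The plan is to reduce the statement to Lemma~\ref{prodR-to-discrete-lem} by passing to the quotients $\scrN/\scrV$ over all open submodules $\scrV\subset\scrN$, using that both sides of the asserted bijection are limits of the corresponding expressions for the discrete modules $\scrN/\scrV$. First I would note that each quotient $\scrN/\scrV$ is a discrete right $\R$\+module, and that $\scrN=\varprojlim_{\scrV}\scrN/\scrV$ in $\Modcs\R$ since $\scrN$ is complete and separated; by definition $\scrN[[X]]=\varprojlim_{\scrV}(\scrN/\scrV)[X]$. On the Hom side, I would argue that the natural map $\Hom_\R^\cont(\R^X,\scrN)\rarrow\varprojlim_{\scrV}\Hom_\R^\cont(\R^X,\scrN/\scrV)$ is bijective: a continuous homomorphism $\R^X\rarrow\scrN$ is the same as a compatible family of continuous homomorphisms $\R^X\rarrow\scrN/\scrV$, because continuity into $\scrN$ is tested on the base of neighborhoods $\scrV$, i.e.\ $f$ is continuous iff each composition $\R^X\rarrow\scrN\rarrow\scrN/\scrV$ is continuous, and the target being complete and separated lets one reassemble such a compatible family into a single continuous map. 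Here I would explicitly invoke the fact, recorded in Subsection~\ref{discrete-and-cs-modules-subsecn}, that $\Modcs\R$ has products and that $\R\in\Modcs\R$, so that $\R^X$ is a genuine object of $\Modcs\R$.

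Next I would check that the maps in question are compatible with the transition maps $\scrN/\scrV'\rarrow\scrN/\scrV$ for $\scrV'\subseteq\scrV$, so that Lemma~\ref{prodR-to-discrete-lem} applied to each $\scrN/\scrV$ assembles into a bijection between the two projective limits. Concretely, a coherent family $\bigl(\sum_{x}\bar n_x^{\scrV}x\bigr)_{\scrV}\in\varprojlim_{\scrV}(\scrN/\scrV)[X]$ is precisely a zero-convergent family $(n_x)_{x\in X}$ in $\scrN$ together with, for each $\scrV$, a finite set $F_\scrV$ outside of which all $\bar n_x^{\scrV}$ vanish; under the bijections of Lemma~\ref{prodR-to-discrete-lem} this corresponds to the coherent family of continuous maps $\R^X\rarrow\scrN/\scrV$ sending $(r_x)_x$ to $\sum_{x\in F_\scrV}\bar n_x^\scrV r_x$, whose limit is exactly the map $(r_x)_x\mapsto\sum_{x\in X}n_x r_x$, the infinite sum being the topological limit of finite partial sums in $\scrN$ (which converges because $\scrN$ is complete and the partial sums form a Cauchy net — modulo any $\scrV$ only the finitely many terms indexed by $F_\scrV$ survive). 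This identifies the asserted assignment with $\varprojlim_\scrV$ of the assignments of Lemma~\ref{prodR-to-discrete-lem}, hence it is a bijection.

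The one point requiring genuine care — and the place I expect to spend most of the effort — is the identification $\Hom_\R^\cont(\R^X,\scrN)\cong\varprojlim_\scrV\Hom_\R^\cont(\R^X,\scrN/\scrV)$, specifically the surjectivity: given a compatible family of continuous $f_\scrV\:\R^X\rarrow\scrN/\scrV$, one gets a well-defined map $f\:\R^X\rarrow\varprojlim_\scrV\scrN/\scrV=\scrN$, and one must verify that this $f$ is itself continuous as a map into $\scrN$ with its topology (not merely that each $\scrN\rarrow\scrN/\scrV$ composite is continuous). But this is immediate from the definition of the topology on $\scrN$: a base of open submodules of $\scrN$ is given by the $\scrV$ themselves, and $f^{-1}(\scrV)=f_\scrV^{-1}(0)$ is open in $\R^X$ by continuity of $f_\scrV$. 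With that in hand the two projective-limit descriptions match termwise and the lemma follows. I would close with a one-sentence remark that the formulas on the two sides are literally the stated ones, the left-hand sum being a zero-convergent formal combination and the right-hand sum its evaluation as a topological limit of finite partial sums, so no separate verification of the explicit assignment is needed beyond what the limit identification already gives.
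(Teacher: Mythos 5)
Your proposal is correct and follows essentially the same route as the paper: the paper's proof also writes $\scrN$ as the inverse limit of its discrete quotients $\scrN/\scrM$, identifies $\Hom_\R^\cont(\R^X,\scrN)$ with $\varprojlim_{\scrM}\Hom_\R^\cont(\R^X,\scrN/\scrM)$, and takes the inverse limit of the bijections from Lemma~\ref{prodR-to-discrete-lem}. Your extra care about continuity of the reassembled map and the compatibility of the explicit formulas is just a fuller writing-out of the same argument.
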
 

\begin{proof}
The topological module $\scrN$ is the inverse limit in $\Modcs\R$ of its discrete quotient modules $\scrN\!/\!\scrM$. Now note that the isomorphism from the statement is obtained as the inverse limit of the isomorphisms $(\scrN\!/\!\scrM)[X]\rarrow\Hom_\R^\cont(\R^X,\scrN\!/\!\scrM)$ from Lemma~\ref{prodR-to-discrete-lem}, using the canonical identifications $\varprojlim_{\scrM\subset\scrN}\Hom_\R^\cont(\R^X,\scrN\!/\!\scrM)=\Hom_\R^\cont(\R^X,\varprojlim_{\scrM\subset\scrN}\scrN\!/\!\scrM)=\Hom_\R^\cont(\R^X,\scrN)$.
\end{proof}

If we specialize the previous lemma to $\scrN=\R^Y$ for a set $Y$, we obtain that $\Hom_\R^\cont(\R^X,\R^Y)\simeq(\R^Y)[[X]]$. The last lemma provides a more convenient point of view at the right hand side of this isomorphism.
To this end, we denote by $\Mat_{Y\times X}(\R)$ the set of all row-zero-convergent (possibly infinite) matrices of elements of $\R$ with rows indexed by elements of $Y$ and columns by elements of $X$. In other words, an element of $\Mat_{Y\times X}(\R)$ is a family $(r_{yx}\in\R)_{y\in Y,x\in X}$ of elements of $\R$ such that for every $y\in Y$, the family of elements $(r_{yx})_{x\in X}$ converges to zero in the topology of $\R$.

\begin{lem} \label{YxX-matrices-lem}
For any sets $X$ and $Y$, we have a bijective assignment
\begin{align*}
(\R^Y)[[X]]&\rarrow\Mat_{Y\times X}(\R),\\
\sum_{x\in X}\big((r_{yx})_{y\in Y}\big)x&\longmapsto(r_{yx})_{y\in Y,x\in X}.
\end{align*}\end{lem}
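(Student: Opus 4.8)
The plan is to observe that, by the description of $\A[[X]]$ recalled in Subsection~\ref{contramodules-subsecn}, the set $(\R^Y)[[X]]$ is precisely the set of all families $(\xi_x\in\R^Y)_{x\in X}$ that converge to zero in the product topology of $\R^Y$; and under the identification $\xi_x=(r_{yx})_{y\in Y}$ the assignment in the statement is simply the reshuffling $(\xi_x)_{x\in X}\leftrightarrow(r_{yx})_{y\in Y,\,x\in X}$, which is evidently injective. Hence everything reduces to the following claim: a family $(\xi_x)_{x\in X}$ with $\xi_x=(r_{yx})_{y\in Y}$ converges to zero in $\R^Y$ if and only if, for every $y\in Y$, the family $(r_{yx})_{x\in X}$ converges to zero in~$\R$; that is, the image of the assignment is exactly $\Mat_{Y\times X}(\R)$.

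To prove the claim, recall that a base of neighborhoods of zero in the product $\R^Y$ is given by the open subgroups $W_{F,\V}=\{(s_y)_{y\in Y}\mid s_y\in\V\text{ for all }y\in F\}$, where $F$ runs over the finite subsets of $Y$ and $\V$ over a base of open right ideals of~$\R$. For the ``only if'' direction, fix $y\in Y$ and an open right ideal $\V\subset\R$; since $(\xi_x)$ converges to zero, $\xi_x\in W_{\{y\},\V}$ for all but finitely many $x$, which says exactly that $r_{yx}\in\V$ for all but finitely many $x$, so $(r_{yx})_{x\in X}$ is zero-convergent in~$\R$. For the ``if'' direction, let $W_{F,\V}$ be a basic open subgroup; for each $y\in F$ choose a finite subset $E_y\subset X$ with $r_{yx}\in\V$ for $x\notin E_y$, and set $E=\bigcup_{y\in F}E_y$, which is still finite. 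Then for every $x\notin E$ we have $r_{yx}\in\V$ for all $y\in F$, i.e.\ $\xi_x\in W_{F,\V}$; hence $(\xi_x)$ converges to zero in $\R^Y$. This proves the claim, and with it the lemma (the assignment being additive as well, if one wishes to record that).

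There is no real obstacle here: the statement is a direct unwinding of the definitions of $\A[[{-}]]$ and of the product topology. The only points requiring a little care are to write down the correct base of neighborhoods of zero in $\R^Y$ and to note, in the ``if'' direction, that the finitely many ``bad'' index sets $E_y$ arising for $y\in F$ can be amalgamated into a single finite set~$E$.
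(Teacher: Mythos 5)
Your proof is correct and follows essentially the same route as the paper's: well-definedness via the product projections (which you carry out directly with the basic open subgroups $W_{\{y\},\V}$ rather than citing continuity of the projections), evident injectivity, and surjectivity by amalgamating the finitely many finite exceptional sets $E_y$ for $y\in F$ into one finite set. No gaps.
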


\begin{proof}
If $\sum_{x\in X}(r_{yx})x\in(\R^Y)[[X]]$, then $(r_{yx})_{x\in X}$ is a zero-convergent family of elements of $\R$ for each $y\in Y$ since the product projections $\pi_y\:\R^Y\rarrow\R$ are continuous. It follows that the assignment is well defined and clearly it is injective.

To prove the surjectivity, consider a matrix $(r_{yx})\in\Mat_{Y\times X}(\R)$. We must show that the family $\big((r_{yx})_{y\in Y}\big)_{x\in X}$ converges to zero in $\R^Y$. Since $\R^Y$ carries the product topology, it has a base of neighborhoods of zero of the form
\[ \U_{F,\I} = \{ (s_y)_{y\in Y} \mid s_y\in\I \textrm{ for each } y\in F \}, \]
where $F\subset Y$ is a finite subset and $\I\subset\R$ is an open right ideal. Given such $F$ and $\I$, note that for each $y\in F$ there exists a finite subset $G_y\subset X$ such that $r_{yx}\in\I$ for each $x\in X\setminus G_y$. Then $(r_{yx})_{y\in Y}\in\U_{F,\I}$ for each $x\in X\setminus G$, where $G=\bigcup_{y\in F}G_y$.
\end{proof}

\begin{proof}[Proof of Theorem~\ref{pontryagin-thm}]
Let $X$ be a set. Then $\Hom^\R(\R[[X]],\R)$ canonically identifies with $\R^X\in\Modcs\R$ by~\cite[Lemma~10.7]{PS3}, and $\Hom_\R^\cont(\R^X,\R)$ canonically identifies with $\R[[X]]\in\R\Contra_\proj$ by Lemma~\ref{prodR-to-cs-lem}.

Since any $\R$\+contramodule homomorphism $\R[[Y]]\rarrow\R[[X]]$ is given by its values on each $y\in Y$, and such a value is an element of $\R[[X]]$, so an $X$-indexed zero-convergent family $(r_{yx})_{x\in X}$ of elements of $\R$, we have a canonical identification $\Hom^\R(\R[[Y]],\R[[X]])\simeq\Mat_{Y\times X}(\R)$. In fact, if $A=(r_{yx})\in\Mat_{Y\times X}(\R)$ and we view $Y$-indexed zero convergent families $\mathbf s=(s_y)_{y\in Y}$ from $\R[[Y]]$ as row vectors, the corresponding left $\R$\+contramodule homomorphism $\R[[Y]]\rarrow\R[[X]]$ is given by matrix multiplication $\mathbf s\longmapsto \mathbf sA=(\sum_{y\in Y} s_yr_{yx})_{x\in X}$, where the infinite sums are computed as topological limits of finite subsums in $\R$.

Similarly, $\Hom^\R(\R^X,\R^Y)\simeq\Mat_{Y\times X}(\R)$ thanks to Lemmas~\ref{prodR-to-cs-lem} and~\ref{YxX-matrices-lem} (applied to $\scrN=\R^Y$).
More explicitly, if $A=(r_{yx})\in\Mat_{Y\times X}(\R)$ and we view elements $\mathbf t=(t_x)_{x\in X}\in\R^X$ as column vectors, the corresponding continuous homomorphism of right $\R$\+modules is given by matrix multiplication $\mathbf t\longmapsto A\mathbf t$, where the infinite summation is interpreted in the same way as in the previous paragraph.

All in all, we have just proved that there is a duality between the category of free left $\R$-contramodules on one hand and the category of topological products of copies of the topological right $\R$\+module $\R$ on the other hand. Passing to the idempotent completions on both sides, we obtain the equivalence from the statement of Theorem~\ref{pontryagin-thm}.
\end{proof}

\begin{cor}
 Let\/ $\sA$ be an idempotent-complete topologically agreeable
additive category, let $M\in\sA$ be an object
and let $\R=\End_\sA(M)^\rop$ be the topological endomorphism ring.
Then
\[ \Hom_\sA(-,M)\:\Add(M)^\rop\rarrow\Prod(\R) \]
is an equivalence of categories, where $\Add(M)\subset\sA$ is the 
full subcategory of $\sA$ formed by direct summands of coproducts of copies of $M$ and
$\Prod(\R)$ the full subcategory of $\Modcs\R$ formed by direct summands of products of copies of $\R$.
\end{cor}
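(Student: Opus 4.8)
The plan is to mimic the proof of Theorem~\ref{pontryagin-thm}, replacing the free left $\R$\+contramodules $\R[[X]]$ by the coproducts $M^{(X)}=\coprod_{x\in X}M$ of copies of $M$ inside $\sA$, and using the topological agreeability of $\sA$ in place of the explicit description of morphisms between free contramodules.

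First I would check that $\Hom_\sA(-,M)$ really takes values in $\Modcs\R$. For every $N\in\sA$ the abelian group $\Hom_\sA(N,M)$ carries a complete separated linear topology by the right topological structure on $\sA$, and the right action of $\R=\End_\sA(M)^\rop$ on it by post-composition is continuous, since the composition maps in $\sA$ are continuous and open $\Hom_\sA(M,M)$\+submodules form a base of neighborhoods of zero; thus $\Hom_\sA(N,M)\in\Modcs\R$ and $\Hom_\sA(M,M)=\R$ as an object of $\Modcs\R$. Since the functor is additive, it carries direct summands to direct summands, so it suffices to see that it sends $M^{(X)}$ to the product $\R^X=\prod_X\R$ in $\Modcs\R$; then it sends all of $\Add(M)$ into $\Prod(\R)$. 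As right $\R$\+modules one has $\Hom_\sA(M^{(X)},M)\cong\R^X$ by the universal property of the coproduct, the action being componentwise, and one has to verify that the topology agrees with the product topology. This is the analogue, for a general topologically agreeable category, of the fact used in the proof of Theorem~\ref{pontryagin-thm} via \cite[Lemma~10.7]{PS3}.

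The core computation is parallel to the identification $\Hom^\R(\R[[Y]],\R[[X]])\cong\Mat_{Y\times X}(\R)$ in that proof. By the universal property of the coproduct, $\Hom_\sA(M^{(Y)},M^{(X)})=\prod_{y\in Y}\Hom_\sA(M,M^{(X)})$, and here is where the hypotheses enter: by the agreeability of $\sA$ the natural map $\Hom_\sA(M,M^{(X)})\rarrow\prod_{x\in X}\Hom_\sA(M,M)=\R^X$ is injective, and its image is exactly the set of summable families, which by the topological agreeability of $\sA$ coincides with the set of zero-convergent families; hence $\Hom_\sA(M,M^{(X)})\cong\R[[X]]$ as abelian groups, and $\Hom_\sA(M^{(Y)},M^{(X)})\cong\prod_{y\in Y}\R[[X]]=\Mat_{Y\times X}(\R)$. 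A routine check shows that composition in $\sA$ goes over into matrix multiplication, exactly as in the theorem; and by Lemmas~\ref{prodR-to-cs-lem} and~\ref{YxX-matrices-lem} one has $\Hom_\R^\cont(\R^X,\R^Y)\cong\Mat_{Y\times X}(\R)$ compatibly with composition as well. Comparing the two descriptions, $\Hom_\sA(-,M)$ restricts to an equivalence $\sA_0^\rop\simeq\sB_0$, where $\sA_0\subset\sA$ is the full subcategory on the objects $M^{(X)}$ and $\sB_0\subset\Modcs\R$ the full subcategory on the objects $\R^X$: it is bijective on objects by construction and bijective on Hom-groups by the matching matrix descriptions.

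Finally I would pass to idempotent completions. Since $\sA$ is idempotent-complete, $\Add(M)$ is the closure of $\sA_0$ under direct summands in $\sA$, that is, the idempotent completion of $\sA_0$; and since idempotents split in $\Modcs\R$ (Subsection~\ref{discrete-and-cs-modules-subsecn}), $\Prod(\R)$ is the idempotent completion of $\sB_0$. An equivalence of categories extends uniquely to an equivalence of their idempotent completions, so the equivalence $\sA_0^\rop\simeq\sB_0$ extends to an equivalence $\Add(M)^\rop\simeq\Prod(\R)$; being additive and continuous, this extension agrees with $\Hom_\sA(-,M)$ on all of $\Add(M)^\rop$, which is the assertion. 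The main obstacle is not conceptual but consists of the bookkeeping with the opposite categories and with which index set labels rows versus columns, so that composition corresponds to matrix multiplication in the same normalization as in the proof of Theorem~\ref{pontryagin-thm}, together with the verification that the right topological structure on $\sA$ induces exactly the product topology on $\Hom_\sA(M^{(X)},M)$. One could alternatively sidestep the latter point by deducing the corollary from Theorem~\ref{pontryagin-thm} and the equivalence $\Add(M)\simeq\R\Contra_\proj$ furnished by $\Hom_\sA(M,-)$ (in the spirit of \cite[Theorem~7.1]{PS1}), after checking that the resulting composite functor is naturally isomorphic to $\Hom_\sA(-,M)$.
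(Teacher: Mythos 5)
Your argument is correct, but it takes a different route from the paper. The paper's proof is a one\-liner: it composes the already established equivalence $\Hom_\sA(M,-)\:\Add(M)\rarrow\R\Contra_\proj$ of \cite[Theorem~3.14(iii)]{PS3} with the duality $(\R\Contra_\proj)^\sop\simeq\Prod(\R)$ of Theorem~\ref{pontryagin-thm} — exactly the alternative you mention in your last sentence (the composite is identified with $\Hom_\sA(-,M)$ via the natural isomorphism $\Hom^\R(\Hom_\sA(M,N),\R)\simeq\Hom_\sA(N,M)$ for $N\in\Add(M)$). What you do instead is rerun the proof of Theorem~\ref{pontryagin-thm} with $M^{(X)}$ in place of $\R[[X]]$: identify $\Hom_\sA(M^{(Y)},M^{(X)})$ with $\Mat_{Y\times X}(\R)$ using agreeability (injectivity into $\R^X$) plus topological agreeability (summable $=$ zero\-convergent), match this against $\Hom_\R^\cont(\R^X,\R^Y)\simeq\Mat_{Y\times X}(\R)$ from Lemmas~\ref{prodR-to-cs-lem} and~\ref{YxX-matrices-lem}, and pass to idempotent completions. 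This is essentially a re\-proof of the relevant part of \cite[Theorem~3.14]{PS3} merged with the proof of Theorem~\ref{pontryagin-thm}; it buys self\-containedness and makes explicit where each hypothesis (agreeability, topological agreeability, idempotent completeness) enters, at the cost of redoing the matrix bookkeeping and the verification that the topology on $\Hom_\sA(M^{(X)},M)$ is the product topology (which is again \cite[Lemma~10.7]{PS3}, valid for any topologically agreeable category, so this point is not actually an extra burden). The paper's route is shorter because it outsources all of that to the two cited results.
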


\begin{proof}
We just combine the equivalence from \cite[Theorem~3.14(iii)]{PS3} with the duality from Theorem~\ref{pontryagin-thm}.
\end{proof}

\Section{Discrete Local and Semiperfect Rings} \label{discrete-secn}

 In this section we recall the basic results concerning semiperfect
rings and their characterization.
 We use the book~\cite{AF} as the reference source.

 A nonzero ring $R$ is called \emph{local} if noninvertible elements
form an additive subgroup in~$R$, or equivalently, the unit element
of $R$ is not a sum of two noninvertible elements.
 A ring is local if and only if it has a unique maximal right ideal,
and if and only if the quotient ring of $R$ by its Jacobson radical is
a division ring~\cite[Proposition~15.15]{AF}.

\begin{lem} \label{projective-cover-of-simple}
 Let $R$ be a ring and $P$ be a projective right $R$\+module.
 Then $P$ is a projective cover of a simple right $R$\+module if and
only if the endomorphism ring of $P$ is a local ring.
 Moreover, if any one of these conditions holds, then $P$ is isomorphic
to the right $R$\+module $eR$ for some idempotent element $e\in R$.
\end{lem}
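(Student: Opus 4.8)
The plan is to reduce both implications, and the ``moreover'' clause, to a single form: that under either hypothesis $P$ is isomorphic to $eR$ for some idempotent $e$, and then to invoke the classical fact recorded in~\cite{AF} that, for an idempotent $e\in R$, the ring $eRe$ is local if and only if $eR$ is a projective cover of a simple right $R$\+module. Since $\End_R(P)\cong\End_R(eR)$ is (anti)isomorphic to $eRe$ (and being local is not affected by passing to the opposite ring), this classical equivalence will immediately give ``$\End_R(P)$ local $\iff$ $P$ is a projective cover of a simple module'' once we know $P\cong eR$. So the work is to establish $P\cong eR$ from each side.

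For the direction ``projective cover of a simple module $\Rightarrow$ $P\cong eR$'': let $p\:P\rarrow S$ be such a cover with $K=\ker p$ superfluous in $P$. I would lift a generator of $S$ to an element $x\in P$; then $xR+K=P$, and superfluousness of $K$ forces $xR=P$, so $P$ is cyclic. A cyclic projective module is a direct summand of $R_R$ --- split the canonical surjection $R\rarrow P$ --- and a direct summand of $R_R$ has the form $eR$ for an idempotent $e\in R$. This already proves the ``moreover'' clause in this case, and then the classical equivalence above yields that $\End_R(P)\cong eRe$ is local.

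For the direction ``$\End_R(P)$ local $\Rightarrow$ $P\cong eR$'': write $E=\End_R(P)$; since $E$ is a (nonzero) local ring, $P\ne0$. Using the dual basis lemma, I would fix families $(p_i)_{i\in I}$ in $P$ and $(f_i)_{i\in I}$ in $\Hom_R(P,R)$ with $p=\sum_i p_if_i(p)$ for every $p\in P$, each such sum having finite support, and set $\pi_i\in E$ equal to the endomorphism $q\mapsto p_if_i(q)$. Fixing one $0\ne p\in P$ and letting $F\subseteq I$ be the finite set of indices $i$ with $f_i(p)\ne0$, the finite sum $\psi=\sum_{i\in F}\pi_i\in E$ satisfies $\psi(p)=p$; hence $1-\psi$ is not injective, so not a unit of $E$, so $1-\psi$ lies in $H(E)$ (as $E$ is local), whence $\psi=1-(1-\psi)$ is a unit of $E$. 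Since $H(E)$ is an additive subgroup, not all $\pi_i$ with $i\in F$ lie in $H(E)$, so some $\pi_{i_0}$ is a unit of $E$, i.e.\ an automorphism of $P$; factoring $\pi_{i_0}=\mu\circ f_{i_0}$ with $\mu\:R\rarrow P$, $r\mapsto p_{i_0}r$, shows that $\mu$ has the right inverse $f_{i_0}\circ\pi_{i_0}^{-1}$, hence the surjection $\mu\:R\rarrow P$ splits and $P$ is a direct summand of $R_R$. Thus $P\cong eR$ (the ``moreover'' clause again), and the classical equivalence finishes this direction.

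The only genuinely external ingredient is the classical statement ``$eRe$ local $\iff$ $eR$ is a projective cover of a simple module'' from~\cite{AF}; its substance is that $eR$ has a unique maximal submodule precisely when $eRe$ is local (equivalently, after reducing modulo $H(R)$, that a cyclic projective module over a semiprimitive ring with division endomorphism ring is simple), and I would quote it rather than reprove it. The one place a small argument is genuinely needed is the dual-basis step in the second direction, and the point to be careful about there is that $\sum_i\pi_i=\id_P$ only in the pointwise sense when $I$ is infinite, which is exactly why one restricts attention to the finite support $F$ of a single nonzero element $p$ before invoking locality of $E$. (If one prefers not to cite the equivalence in the form above, the same argument shows directly that $P\cong eR$ is cyclic, hence $\rad P$ is superfluous; that $P$ is indecomposable --- a nonzero complementary summand $B$ would satisfy $B=\rad B=B\cdot H(R)$ and, being a finitely generated summand of the cyclic module $P$, would vanish by Nakayama; and one then still needs the input that $eRe$ local makes $eR/\rad(eR)$ simple to conclude $P\to P/\rad P$ is the required cover.)
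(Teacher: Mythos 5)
Your argument is correct, but it is worth noting that the paper offers no argument at all here: its proof of this lemma is a one-line citation of \cite[Proposition~17.19]{AF}. What you have done is supply a genuine proof of the reduction to the cyclic case. Both of your reductions check out: in the forward direction, lifting a generator of the simple quotient and using superfluousness of the kernel to get $P=xR$, then splitting $R\twoheadrightarrow P$; in the backward direction, the dual-basis argument, where you correctly handle the infinite-support issue by working with the finite support $F$ of a single nonzero element, and the locality of $E$ is used exactly where it must be (noninvertible elements form an additive subgroup, so the unit $\psi=\sum_{i\in F}\pi_i$ forces some $\pi_{i_0}$ to be a unit, splitting $\mu\:R\to P$). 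One caution about the logical bookkeeping: the ``classical fact'' you quote to finish --- $eRe$ local if and only if $eR$ is a projective cover of a simple module --- is precisely the special case $P=eR$ of the lemma itself, and inside this paper that equivalence appears as Corollary~\ref{local-idempotent-cor}(1)$\Leftrightarrow$(2), which is \emph{derived from} Lemma~\ref{projective-cover-of-simple}. So you must cite it from \cite{AF} directly (as you do) rather than from the paper, and you should be aware that your proof is really a reduction of the general projective $P$ to the idempotent case, with the substance of that case still imported. Your closing parenthetical sketch of how to avoid the citation is muddier (the claim that a complementary summand $B$ satisfies $B=\rad B$ is not justified as stated --- indecomposability follows more directly from the absence of nontrivial idempotents in the local ring $\End_R(P)$), but since the main line of your proof does not rely on it, this does not affect correctness.
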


\begin{proof}
 This is a part of~\cite[Proposition~17.19]{AF}.
\end{proof}

\begin{cor} \label{local-idempotent-cor}
 Let $R$ be a ring with the Jacobson radical $H=H(R)$, and let $e\in R$
be an idempotent element.
 Then the following conditions are equivalent:
\begin{enumerate}
\item the ring $eRe$ is local;
\item the right $R$\+module $eR$ is a projective cover of a simple
right $R$\+module;
\item the left $R$\+module $Re$ is a projective cover of a simple
left $R$\+module;
\item $eR/eH$ is a simple right $R$\+module;
\item $Re/He$ is a simple left $R$\+module.
\end{enumerate}
 Moreover, if any one of the above five equivalent conditions holds,
then $eH$ is the unique maximal submodule in the right $R$\+module $eR$,
and $He$ is the unique maximal submodule in the left $R$\+module~$Re$.
\end{cor}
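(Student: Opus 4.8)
The plan is to route everything through endomorphism rings, together with the elementary computation of the radicals of $eR$ and $Re$. Recall first that there is a ring isomorphism $\End_R(eR)\cong eRe$ (an $R$\+linear endomorphism of the cyclic right module $eR$ is left multiplication by a unique element of $eRe$) and a ring anti-isomorphism $\End_R(Re)\cong(eRe)^{\rop}$. Since a ring is local if and only if its opposite ring is local (by the characterization recalled above: $R/H(R)$ is a division ring exactly when $(R/H(R))^{\rop}$ is), Lemma~\ref{projective-cover-of-simple} applied to the projective modules $eR$ and $Re$ gives at once that (2) holds iff $eRe$ is local, and that (3) holds iff $(eRe)^{\rop}$, equivalently $eRe$, is local. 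This yields (1)$\Leftrightarrow$(2)$\Leftrightarrow$(3), and also lets us read conditions (2) and (3) as the assertions that $eR$, respectively $Re$, is a projective cover of a simple module.

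It remains to prove (2)$\Leftrightarrow$(4), the equivalence (3)$\Leftrightarrow$(5) being symmetric. Since $eR$ is a direct summand of $R$ as a right $R$\+module, $\rad(eR)=\rad(R)\cap eR=H(R)\cap eR=eH$. As $eR$ is finitely generated, $\rad(eR)=eH$ is superfluous in $eR$; hence the canonical epimorphism $eR\twoheadrightarrow eR/eH$ is a projective cover, and so $eR$ is a projective cover of a simple module whenever $eR/eH$ is simple. Conversely, if $f\:eR\rarrow S$ is a projective cover with $S$ simple, then $\ker f$ is a maximal submodule of $eR$, so it contains $\rad(eR)$; but $\ker f$ is also superfluous, and a superfluous submodule lies inside every maximal submodule (else, together with one of them, it would span all of $eR$), so $\ker f\subseteq\rad(eR)$. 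Therefore $\ker f=\rad(eR)=eH$ and $S\cong eR/eH$ is simple. The same discussion proves the ``moreover'' clause: under the equivalent conditions, $eH=\rad(eR)$ is a maximal submodule that equals the intersection of all maximal submodules of $eR$, hence is the unique one; and the statement for $He=\rad(Re)$ inside $Re$ follows symmetrically on the left.

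I do not expect any genuine obstacle, as the argument is essentially bookkeeping. The two points that need a little care are the handedness --- making sure Lemma~\ref{projective-cover-of-simple} is invoked for $eRe$ in the right-module situation and for $(eRe)^{\rop}$ in the left-module situation --- and the elementary fact that a superfluous submodule of a finitely generated module sits inside the radical, which is precisely what links conditions (2) and (4) (resp.\ (3) and (5)) and at the same time delivers the uniqueness of the maximal submodule.
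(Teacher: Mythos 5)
Your proof is correct and follows essentially the same route as the paper: the paper's own proof simply cites Lemma~\ref{projective-cover-of-simple} together with the standard facts that for a finitely generated projective module $P$ the submodule $PH=\rad(P)$ is superfluous and is the intersection of all maximal submodules, which is exactly the bookkeeping you carry out explicitly (including the identifications $\End_R(eR)\cong eRe$ and $\End_R(Re)\cong(eRe)^{\rop}$ and the computation $\rad(eR)=eH$, $\rad(Re)=He$).
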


\begin{proof}
 Follows immediately from the preceding lemma, together with the facts
that, for any finitely generated projective right $R$\+module $P$,
the submodule $PH\subset P$ is superfluous, contains all the other
superfluous submodules of~$P$, and is equal to the intersection of
all maximal submodules of~$P$ \,\cite[Propositions~9.13, 9.18,
and~17.10]{AF}.
 (Cf.~\cite[Corollary~17.20]{AF}.)
\end{proof}

 A ring $R$ is called \emph{semiperfect} if its quotient ring $R/H$ by
its Jacobson radical $H$ is semisimple and every idempotent element in
$R/H$ can be lifted to an idempotent element in~$R$.
 A ring $R$ is semiperfect if and only if it admits a finite set of
orthogonal idempotents $e_1$,~\dots, $e_n\in R$ such that
$\sum_{i=1}^n e_i=1$ and $e_iRe_i$ is a local ring for every~$i$
\,\cite[Theorem~27.6]{AF}.
 In this case, the right $R$\+modules $e_iR/e_iH$ are simple, and
the semisimple right $R$\+module $R/H$ is isomorphic to
$\bigoplus_{i=1}^n e_iR/e_iH$.

 A ring $R$ is semiperfect if and only if any simple right
(equivalently, left) $R$\+module has a projective cover, and
if and only if every finitely generated right (equivalently, left)
$R$\+module has a projective cover~\cite[Theorem~27.6]{AF},
\cite[Theorem~3.6]{Fac}.

\begin{lem} \label{endomorphisms-discrete-semiperfect}
 Let $A$ be an associative ring and $M$ be a left $A$\+module.
 Let $R=\End_A(M)^\rop$ be the opposite ring to the ring of
endomorphisms of the $A$\+module $M$; so $M$ is an $A$\+$R$\+bimodule.
 Then the ring $R$ is semiperfect if and only if $M$ is a finite
direct sum of $A$\+modules with local endomorphism rings.
\end{lem}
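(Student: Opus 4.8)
The plan is to translate the module-theoretic statement into a statement about the ring $R$ using the classical characterizations of semiperfect rings recalled above, and exploiting the fact that the isomorphism classes of direct summands of a finite direct sum decomposition of $M$ correspond to isomorphism classes of the summands $e_iR$ with $e_i$ orthogonal idempotents summing to $1$. First I would observe that, since $R=\End_A(M)^\rop$, a direct sum decomposition $M=M_1\oplus\dots\oplus M_n$ of $A$-modules corresponds exactly to a decomposition $1=e_1+\dots+e_n$ into orthogonal idempotents in $R$, with $M_i$ corresponding to the image of $e_i$; moreover $\End_A(M_i)^\rop\cong e_iRe_i$ canonically. Thus $M$ is a finite direct sum of $A$-modules with local endomorphism rings if and only if there exist orthogonal idempotents $e_1,\dots,e_n\in R$ with $\sum_i e_i=1$ and each $e_iRe_i$ local.

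For the ``if'' direction, assume $M=\bigoplus_{i=1}^n M_i$ with each $\End_A(M_i)$ local. Then, as just noted, we get orthogonal idempotents $e_i\in R$ summing to $1$ with each $e_iRe_i$ local, so $R$ is semiperfect by \cite[Theorem~27.6]{AF}. For the ``only if'' direction, assume $R$ is semiperfect. By the same \cite[Theorem~27.6]{AF}, there is a finite complete set of orthogonal idempotents $e_1,\dots,e_n\in R$ with each $e_iRe_i$ local. Setting $M_i=\im(e_i)$ (the $A$-submodule of $M$ which is the image of the endomorphism corresponding to $e_i$), we get $M=\bigoplus_{i=1}^n M_i$ as $A$-modules, with $\End_A(M_i)^\rop\cong e_iRe_i$ local, hence $\End_A(M_i)$ local as well (a ring is local iff its opposite is). This establishes the equivalence.

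The only mild subtlety — and the step I would be most careful about — is the identification $\End_A(\im e_i)^\rop\cong e_iRe_i$ and the bijection between finite direct sum decompositions of $M$ and complete sets of orthogonal idempotents in $R=\End_A(M)^\rop$; but this is entirely standard module theory (it does not involve the topology at all, since finiteness makes the finite topology on $R$ irrelevant here) and can be cited or recalled in a line. Everything else is a direct application of the classical characterization of semiperfect rings via complete finite sets of orthogonal local idempotents, so there is no real obstacle; the lemma is essentially a restatement packaging that will be upgraded to the topological setting in the subsequent sections.
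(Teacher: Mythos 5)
Your proposal is correct and follows essentially the same route as the paper: both directions reduce the statement to the classical characterization of semiperfect rings via a finite complete set of orthogonal idempotents $e_i$ with each $e_iRe_i$ local (\cite[Theorem~27.6]{AF}), using the standard correspondence between finite direct sum decompositions of $M$ and such idempotent families in $R=\End_A(M)^\rop$, together with the identification $\End_A(M_i)^\rop\simeq e_iRe_i$. Your added remark that locality is preserved under passing to the opposite ring is a correct (and harmless) clarification that the paper leaves implicit.
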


\begin{proof}
 ``If'': assume that ${}_AM=\bigoplus_{i=1}^nM_i$, where the rings
$R_i=\End_A(M_i)^\rop$ are local.
 Denote by $e_i\in R$ the projector onto the direct summand $M_i$
in~$M$; then $e_i\in R$ and the ring $R_i$ is isomorphic to $e_iRe_i$.
 Since $e_1$,~\dots, $e_n$ are orthogonal idempotents in $R$ and
$\sum_{i=1}^n e_i=1$, the assertion follows.

 ``Only if'': assume that $R$ is a semiperfect ring, and let
$e_1$,~\dots, $e_n\in R$ be a set of orthogonal idempotents with
$\sum_{i=1}^n e_i=1$ such that the rings $e_iRe_i$ are local.
 Then the images $M_i$ of the $A$\+module endomorphisms
$e_i\:M\rarrow M$ form a direct sum decomposition of $M$, that is
$M=\bigoplus_{i=1}^nM_i$; and the endomorphism ring of the $A$\+module
$M_i$ is isomorphic to $e_iRe_i$.
\end{proof}

 Let $M=\bigoplus_{i=1}^n M_i$ be a left $A$\+module decomposed into
a finite direct sum of $A$\+modules~$M_i$.
 Then elements of the ring $R=\End_A(M)^\rop$ can be represented by
matrices $(r_{j,i})_{j,i=1}^n$ whose entries are $A$\+module morphisms
$M_i\larrow M_j\,:\!r_{j,i}$.

\begin{prop} \label{discrete-jacobson-as-matrices-of-nonisos}
 Let $A$ be an associative ring and $M$ be a left $A$\+module
decomposed into a finite direct sum $M=\bigoplus_{i=1}^n M_i$ of
modules with local endomorphism rings $R_i=\End_A(M_i)^\rop$.
 Then the Jacobson radical $H\subset R$ of the ring
$R=\End_A(M)^\rop$ is the set of all matrices $(h_{j,i})_{j,i=1}^n$
such that, for every pair of indices $j$ and $i$, the morphism
$M_i\larrow M_j\,:\!h_{j,i}$ is \emph{not} an isomorphism.
\end{prop}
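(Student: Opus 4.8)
The plan is to prove the two inclusions separately. Write $H=H(R)$, and let $J\subseteq R$ denote the set of all matrices $(h_{j,i})_{j,i=1}^n$ every entry of which fails to be an isomorphism; we must show $H=J$. Let $e_i\in R$ be the idempotent projecting $M$ onto its summand $M_i$, so that $e_1,\dots,e_n$ are orthogonal with sum $1$ and $e_iRe_i\cong R_i$; in particular each $M_i$ is nonzero and indecomposable (a module with local endomorphism ring has no nontrivial idempotent endomorphisms), and by Lemma~\ref{endomorphisms-discrete-semiperfect} the ring $R$ is semiperfect. I will use three elementary facts: (a) $H$ is a two-sided ideal and contains no nonzero idempotent; (b) an endomorphism $M_i\larrow M_i$ lies in $H(R_i)$ exactly when it is not an isomorphism, and $H(R_i)$ is an additive subgroup of $R_i$, so — transporting along any isomorphism $M_i\cong M_j$ — the non-isomorphisms $M_i\larrow M_j$ form an additive subgroup of $\Hom_A(M_j,M_i)$ whenever $M_i\cong M_j$, whereas every homomorphism between them is a non-isomorphism when $M_i\not\cong M_j$; (c) a nonzero split monomorphism into an indecomposable module is an isomorphism.

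For the inclusion $H\subseteq J$: if $r=(r_{j,i})\in H$ had an entry $r_{j_0,i_0}$ that is an isomorphism, then multiplying $r$ on both sides by suitable idempotents among the $e_i$ and composing, within the block connecting $M_{i_0}$ and $M_{j_0}$, with $(r_{j_0,i_0})^{-1}$, would produce the idempotent $e_{i_0}$ inside $H$ (as $H$ is a two-sided ideal); since $e_{i_0}\neq0$, this contradicts~(a). Hence $r\in J$. For the reverse inclusion I would invoke the standard characterization of the Jacobson radical \cite[Theorem~15.3]{AF}: it suffices to prove that $1-\phi r$ is invertible in $R$ for all $\phi\in R$ and $r\in J$. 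A routine matrix computation using~(b) and~(c) shows $\phi r\in J$ (each entry of $\phi r$ is a finite sum of composites of a non-isomorphic entry of $r$ — which has indecomposable source — with an $A$-homomorphism, hence a non-isomorphism). As $J$ is stable under negation, it remains to show that $\id_M+s$ is an automorphism of $M$ for every $s=(s_{j,i})\in J$.

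This I would prove by induction on $n$. For $n=1$, $s_{1,1}\in H(R_1)$ by~(b), so $\id_{M_1}+s_{1,1}$ is invertible. For $n>1$, the $(1,1)$-entry $\id_{M_1}+s_{1,1}$ of $\id_M+s$ is an automorphism of $M_1$ by the case $n=1$, so, after clearing the first row and the first column by the corresponding unipotent (hence invertible) elementary transformations, the problem reduces to showing that the remaining lower-right $(n-1)\times(n-1)$ block $g'$, an endomorphism of $M'=\bigoplus_{i=2}^n M_i$, is an automorphism. The $(i,i)$-entry of $g'$ is $(\id_{M_i}+s_{i,i})-s_{i,1}(\id_{M_1}+s_{1,1})^{-1}s_{1,i}$, where the composite $s_{i,1}(\id_{M_1}+s_{1,1})^{-1}s_{1,i}$ lies in $H(R_i)$ by~(c) (otherwise $s_{1,i}$ would be a nonzero split monomorphism into the indecomposable $M_1$, hence an isomorphism, contradicting $s\in J$); since $s_{i,i}\in H(R_i)$ and $H(R_i)$ is an ideal, the $(i,i)$-entry of $g'$ is $\id_{M_i}$ plus an element of $H(R_i)$. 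For $j\neq i$ with $j,i\geq2$, the $(j,i)$-entry $s_{j,i}-s_{j,1}(\id_{M_1}+s_{1,1})^{-1}s_{1,i}$ is a difference of two non-isomorphisms $M_i\larrow M_j$ (the second again by~(c)), hence a non-isomorphism by~(b). Thus $g'=\id_{M'}+s'$ with $s'\in J$ relative to the decomposition $M'=\bigoplus_{i=2}^n M_i$, so the inductive hypothesis applies; since $\id_M+s$ agrees with the block-diagonal endomorphism having diagonal blocks $\id_{M_1}+s_{1,1}$ and $g'$ up to multiplication by invertible unipotent transformations, it is an automorphism of $M$.

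The crux is exactly the stability of $J$ under the elementary transformations: $J$ is \emph{not} an additive subgroup of $R$ in general, and it is precisely the additive-subgroup property of non-isomorphisms between \emph{isomorphic} indecomposable summands — fact~(b) — that keeps the reduced block of the form $\id_{M'}+s'$ with $s'\in J$ and lets the induction close. (Alternatively, one may exploit directly that $R$ is semiperfect: $R/H$ is semisimple, the images $\bar e_i$ are primitive idempotents with $\bar e_i(R/H)\bar e_i$ a division ring, and $\bar e_i(R/H)\bar e_j\neq0$ iff $e_iR\cong e_jR$ as right $R$-modules, i.e.\ iff $M_i\cong M_j$; a block-by-block analysis of the surjection $R\to R/H$ then recovers the description of $H$, at the cost of leaning on the structure theory of semisimple rings.)
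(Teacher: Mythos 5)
Your proof is correct, but it takes a genuinely different route from the paper's. The paper never verifies the two inclusions by hand: it checks that the set $H'$ of matrices of nonisomorphisms is a two-sided ideal with \emph{semisimple} quotient $R/H'$ (a product of matrix rings over the residue division rings of the~$R_i$), which immediately gives $H\subseteq H'$, and then gets the reverse inclusion by a counting argument --- $R$ is semiperfect by Lemma~\ref{endomorphisms-discrete-semiperfect}, so $R/H$ is semisimple with exactly $n$ simple summands, $R/H'$ also has exactly $n$ simple summands, and a surjection of semisimple modules of the same finite length is an isomorphism. You instead prove $H\subseteq J$ by producing a nonzero idempotent in $H$ from any invertible entry, and $J\subseteq H$ by the quasi-regularity criterion together with a Schur-complement induction; the key points --- that a composite through a nonisomorphism with indecomposable (co)domain is again a nonisomorphism, and that nonisomorphisms $M_j\larrow M_i$ form an additive subgroup of $\Hom_A(M_j,M_i)$ (your fact~(b)) --- are exactly what the paper's ``one can easily check'' for the ideal property of $H'$ also rests on. Your argument is more elementary and self-contained (no structure theory of semisimple rings, no length count), and in fact makes the semiperfectness of $R$ unnecessary; the paper's version has the advantage of producing the explicit description of $R/H'$ that is reused in Proposition~\ref{matrices-of-noninvertibles}. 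One small slip in your closing commentary: given fact~(b), the set $J$ \emph{is} an additive subgroup of $R$ (indeed a two-sided ideal, as the paper asserts) --- what fails ``in general'' is additivity of the nonisomorphisms when the endomorphism rings are not local; this does not affect the proof.
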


\begin{proof}
 Denote temporarily by $H'\subset R$ the subset of all matrices of
nonisomorphisms.
 Using the assumption that the rings $R_i$ are local, one can easily
check that $H'$ is a two-sided ideal in $R$ and the quotient ring
$S=R/H'$ is semisimple.
 In fact, the relation of being isomorphic $A$\+modules
is an equivalence relation on
the set of all $A$\+modules $M_i$, \ $1\le i\le n$; let us consider
the equivalence (isomorphism) classes of these modules.
 If $m$~is the number of such isomorphism classes,
then $S$ is a direct product of $m$~simple rings.
 The latter are the rings of matrices over division rings (the residue
skew-fields of the rings~$R_i$) with the sizes of the matrices equal
to the cardinalities of the isomorphism classes of the modules~$M_i$
(cf.\ the proof of Proposition~\ref{matrices-of-noninvertibles} below).

 Since the quotient ring $R/H'$ is semisimple, it follows that
$H\subset H'$.
 In order to show that $H=H'$, one observes that the quotient ring
$R/H$ is semisimple, since the ring $R$ is semiperfect by
Lemma~\ref{endomorphisms-discrete-semiperfect}.
 Furthermore, following the proof of
Lemma~\ref{endomorphisms-discrete-semiperfect} and the discussion
preceding its formulation, the semisimple right $R$\+module $R/H$ is
a direct sum of $n$~simple modules.
 On the other hand, the discussion in the previous paragraph implies
that the semisimple right $R$\+module $R/H'$ is also a direct sum of
$n$~simple modules.
 Hence the natural surjective map $R/H\rarrow R/H'$ is an isomorphism,
and we can conclude that $H=H'$.
\end{proof}

\Section{Topologically Semiperfect Topological Rings}
\label{topologically-semiperfect-secn}

 Let $\R$ be a topological ring.
 Recall that the coproduct of a family of $\R$\+con\-tra\-mod\-ules
$\C_\alpha$, taken in the category $\R\Contra$, is denoted by
$\coprod_\alpha\C_\alpha=\coprod_\alpha^{\R\Contra}\C_\alpha$.
 Let us emphasize that the forgetful functor $\R\Contra\rarrow\R\Modl$
does \emph{not} usually preserve coproducts.
 As mentioned in Subsection~\ref{contramodules-subsecn}, the group of all
morphisms $\P\rarrow\Q$ in the category $\R\Contra$ is denoted by
$\Hom^\R(\P,\Q)$.

\begin{thm} \label{topologically-semiperfect-rings}
 Let\/ $\R$ be a topological ring.
 Then the following conditions are equivalent:
\begin{enumerate}
\item the free left\/ $\R$\+contramodule with one generator\/ $\R\in
\R\Contra$ decomposes as a coproduct of\/ $\R$\+contramodules with
local endomorphism rings (in the category\/ $\R\Contra$);
\item the right\/ $\R$\+module\/ $\R$, viewed as a topological
right\/ $\R$\+module, decomposes as a direct product of
topological\/ $\R$\+modules with local endomorphism rings (with
the product topology on the direct product);
\item there exists a set $Z$ and a zero-convergent family of elements\/
$\mathbf e=(e_z\in\nobreak\R)_{z\in Z}\allowbreak\in\R[[Z]]$ such that
$(e_z\in\R)_{z\in Z}$ is a family of pairwise orthogonal idempotents,
$\sum_{z\in Z}e_z=1$ in\/ $\R$, and $e_z\R e_z$ is a local ring for
every $z\in Z$.
\end{enumerate}
\end{thm}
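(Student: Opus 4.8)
The plan is to prove the equivalences in the form $(1)\Leftrightarrow(2)$ and $(2)\Leftrightarrow(3)$: the contramodule-side statement~(1) will be derived from the topological-module-side statement~(2) by the Pontryagin-type duality of Theorem~\ref{pontryagin-thm}, while $(2)\Leftrightarrow(3)$ is handled by a direct computation with idempotents. I first observe that both decompositions automatically take place inside the relevant projective subcategories. Any coproduct summand $\C_z$ of $\R\cong\coprod_z\C_z$ in $\R\Contra$ is a retract of the free contramodule~$\R$, hence a projective $\R$\+contramodule, and $\R\Contra_\proj\subseteq\R\Contra$ is closed under coproducts (a coproduct of summands of free contramodules $\R[[X_z]]$ is a summand of $\coprod_z\R[[X_z]]=\R[[\bigsqcup_z X_z]]$), so the coproduct in~(1) is also a coproduct in $\R\Contra_\proj$. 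Dually, each factor $\scrN_z$ of $\R\cong\prod_z\scrN_z$ in $\Modcs\R$ is a retract of $\R\in\Prod(\R)$ and therefore lies in $\Prod(\R)$, and $\Prod(\R)$ is closed under products taken in $\Modcs\R$. Now the contravariant equivalence $\Hom^\R({-},\R)$ of Theorem~\ref{pontryagin-thm} sends $\R$ to~$\R$, carries coproducts of projective contramodules to products (with the product topology) of complete separated topological modules, and induces anti-isomorphisms of endomorphism rings $\End_\R^\cont(\Hom^\R(\C_z,\R))\cong\End^\R(\C_z)^\rop$; since a ring is local precisely when its opposite is, a decomposition of~$\R$ as in~(1) is carried to one as in~(2) and conversely by the inverse equivalence $\Hom_\R^\cont({-},\R)$. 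This proves $(1)\Leftrightarrow(2)$.

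For $(3)\Rightarrow(2)$, I set $\scrN_z=e_z\R\in\Modcs\R$ with the subspace topology, and claim that $r\mapsto(e_zr)_{z\in Z}$ defines an isomorphism $\R\rarrow\prod_{z\in Z}e_z\R$ in $\Modcs\R$, the product carrying the product topology. This map is $\R$\+linear and continuous, each component being left multiplication by~$e_z$, and it is injective because $r=\bigl(\sum_z e_z\bigr)r=\sum_z e_zr$ by continuity of right multiplication by~$r$. For surjectivity, any family $(n_z)\in\prod_z e_z\R$ is automatically zero-convergent in~$\R$: given an open right ideal $\I$, one has $e_z\in\I$, hence $n_z=e_zn_z\in\I$, for all but finitely many~$z$. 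Thus $\sum_z n_z$ converges in the complete ring~$\R$ to an element $r$ with $e_wr=n_w$ for all~$w$, and the inverse assignment $(n_z)\mapsto\sum_z n_z$ is continuous, as one sees by splitting the sum at the finitely many indices with $e_z\notin\I$ and using that the open right ideal $\I$ is closed. Finally, every $\R$\+module endomorphism of $e_z\R$ is right multiplication by an element of $e_z\R e_z$ and hence automatically continuous, so $\End_\R^\cont(e_z\R)=\End_\R(e_z\R)\cong(e_z\R e_z)^\rop$ is local.

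For $(2)\Rightarrow(3)$, I identify $\R$ with $\prod_{z\in Z}\scrN_z$ via the given isomorphism; this product comes with continuous projections $p_z\:\R\rarrow\scrN_z$ and sections $q_z\:\scrN_z\rarrow\R$ satisfying $p_zq_z=\id_{\scrN_z}$ and $p_zq_{z'}=0$ for $z\neq z'$. Since every $\R$\+linear endomorphism of the free right module~$\R$ is left multiplication by its value at~$1$, there are (nonzero) elements $e_z\in\R$ with $q_zp_z=e_z\cdot({-})$; these form a family of pairwise orthogonal idempotents with $e_z\R=\im(q_z)\cong\scrN_z$, so $\End_\R^\cont(e_z\R)\cong(e_z\R e_z)^\rop$ is local. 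It remains to check that $\mathbf e=(e_z)_{z\in Z}$ lies in $\R[[Z]]$ and that $\sum_z e_z=1$, and this is where I expect the main obstacle: one has to translate the product topology on $\prod_z\scrN_z$, i.e.\ the given topology of~$\R$, back into the intrinsic language of open right ideals. The key point is that every basic neighborhood of zero in the product topology contains a submodule of the form $(1-f_F)\R$ for some finite $F\subseteq Z$, where $f_F=\sum_{z\in F}e_z$, because $\bigcap_{z\in F}\ker p_z=\bigcap_{z\in F}\ker(e_z\cdot({-}))=(1-f_F)\R$. Given an open right ideal $\I\subseteq\R$ and choosing such an~$F$ with $(1-f_F)\R\subseteq\I$, one obtains $e_z=(1-f_F)e_z\in\I$ for every $z\notin F$ (so $\mathbf e$ is zero-convergent) and $1-f_{F'}\in(1-f_{F'})\R\subseteq(1-f_F)\R\subseteq\I$ for every finite $F'\supseteq F$ (so the partial sums $f_{F'}$ converge to~$1$). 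This completes $(2)\Rightarrow(3)$, and hence the proof.
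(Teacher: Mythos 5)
Your proof is correct and follows essentially the same route as the paper: the equivalence $(1)\Leftrightarrow(2)$ is obtained from Theorem~\ref{pontryagin-thm}, and $(2)\Rightarrow(3)$ proceeds by extracting the projectors of the product decomposition and testing them against open right ideals, exactly as in the paper. The only structural difference is that you close the cycle with a direct verification of $(3)\Rightarrow(2)$ on the topological-module side (showing that $r\mapsto(e_zr)_{z\in Z}$ is an isomorphism onto $\prod_{z\in Z}e_z\R$ in $\Modcs\R$), whereas the paper proves the mirror-image implication $(3)\Rightarrow(1)$ on the contramodule side (showing $\R\simeq\coprod_{z\in Z}\R e_z$ in $\R\Contra$); the two arguments are dual to each other under the established duality and equally valid.
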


 Let us make several comments concerning the formulation of the theorem.
 First of all, when speaking of some rings being local, we consider
them as abstract rings, irrespectively of any topology.
 Notice that, in any separated right linear topology on a local ring,
the maximal ideal is open (since there exists a proper open right ideal
and it is contained in the maximal ideal).

 Furthermore, any direct summand $\P$ of the $\R$\+contramodule $\R$ is
a finitely presented (in fact, finitely generated projective)
$\R$\+contramodule; hence the endomorphism rings of $\P$ in
the categories $\R\Contra$ and $\R\Modl$ agree.
 Similarly, any direct summand $\Q$ of the topological right
$\R$\+module $\R$ has the form $\Q=e\R$ for some idempotent element
$e\in\R$, with the direct summand topology on $e\R$; hence
the endomorphism ring of the topological right $\R$\+module $\Q$
agrees with the endomorphism ring of the abstract right
$\R$\+module $\Q$ (both are equal to $e\R e$);
cf.\ Subsection~\ref{discrete-and-cs-modules-subsecn}.

 The infinite sum $\sum_{z\in Z}e_z\in\R$ is understood as the limit of
finite partial sums in the topology of~$\R$.
 It is well-defined, because the family of elements $e_z\in\R$ is
zero-convergent by assumption (and the topological ring $\R$ is
complete and separated).

\begin{proof}[Proof of Theorem~\ref{topologically-semiperfect-rings}]
 (1)\,$\Longleftrightarrow$\,(2)
 This is an immediate consequence of Theorem~\ref{pontryagin-thm}.

%

 (2)\,$\Longrightarrow$\,(3)
 Suppose that we are given an isomorphism of topological right
$\R$\+modules $\R\simeq\prod_{z\in Z}\Q_z$, with the product
topology on the right-hand side.
 Then, for any fixed $z\in Z$, we have a direct sum decomposition
$\R\simeq\Q_z\oplus\prod_{y\in Z}^{y\ne z}\Q_y$ of the (topological)
right $\R$\+module~$\R$.
 Let $e_z\in\R$ be the idempotent element such that the projector
$\R\rarrow\Q_z\rarrow\R$ (which is a right $\R$\+module morphism)
is given by the left multiplication with~$e_z$; so $\Q_z\simeq e_z\R$.

 Choosing two elements $z\ne w\in Z$ and considering the direct
sum decomposition $\R\simeq\Q_z\oplus\Q_w\oplus
\prod_{y\in Z}^{z\ne y\ne w}\Q_y$, one easily shows that $e_z$
and~$e_w$ are orthogonal idempotents in~$\R$.

 So we have an isomorphism of topological right $\R$\+modules
$\R\simeq\prod_{z\in Z}e_z\R$.
 Let $\I\subset\R$ be an open right ideal.
 Then, by the definition of the product topology, there exists
a subset $Y\subset Z$ with a finite complement $Z\setminus Y$
such that $\prod_{y\in Y}e_y\R\subset\I$.
 Hence $e_y\in\I$ for all $y\in Y$, and we have shown that
the family of elements $(e_z)_{z\in Z}$ converges to zero in~$\R$.

 Finally, in the notation of the previous paragraph we have
$\R\simeq\bigoplus_{z\in Z\setminus Y}e_z\R\oplus
\prod_{y\in Y}e_y\R$.
 Under this direct sum decomposition, the element $1\in\R$
corresponds to the element $\sum_{z\in Z\setminus Y}e_z+f
\in\bigoplus_{z\in Z\setminus Y}e_z\R\oplus\prod_{y\in Y}e_y\R$,
with $e_z\in e_z\R$ and $f\in\prod_{y\in Y}e_y\R\subset\I$.
 Hence $1-\sum_{z\in Z\setminus Y}e_z\in\I$, and we can conclude
that $\sum_{z\in Z}e_z=1$ in~$\R$.

 (3)\,$\Longrightarrow$\,(1)
 For any idempotent element $e\in\R$, the left $\R$\+submodule
$\R e\subset\R$ is a subcontramodule, and in fact naturally a direct
summand of $\R$ in the category $\R\Contra$.
 Hence, given a family of idempotent elements
$\mathbf e=(e_z\in\R)_{z\in Z}$, the contramodule
$\coprod_{z\in Z}^{\R\Contra}\R e_z$ is a direct summand of the free
$\R$\+contramodule $\coprod_{z\in Z}^{\R\Contra}\R\simeq\R[[Z]]$.
 It follows that, viewed as a subcontramodule in $\R[[Z]]$,
the coproduct $\coprod_{z\in Z}\R e_z$ is the set of all elements
$(r_z\in\R e_z)_{z\in Z}$ such that $(r_z\in\R)_{z\in Z}$ is
a zero-convergent family of elements in~$\R$.

 Now the map $f\:\coprod_{z\in Z}\R e_z\rarrow\R$ taking an element
$\mathbf r=(r_z\in\R e_z)_{z\in Z}$ to the element $f(\mathbf r)=
\sum_{z\in Z}r_z\in\R$ is an $\R$\+contramodule morphism (in fact,
a restriction of the natural $\R$\+contramodule morphism
$\R[[Z]]\rarrow\R$, which can be similarly constructed).
 Assuming that $\mathbf e\in\R[[Z]]$, a map $g\:\R\rarrow
\coprod_{z\in Z}\R e_z$ can be defined by the rule
$g(r)=(re_z)_{z\in Z}$ for every $r\in\R$.
 Assuming further that $\sum_{z\in Z}e_z=1$, the composition
$fg\:\R\rarrow\R$ is the identity map.
 Assuming that the idempotents~$e_z$ are pairwise orthogonal,
the composition $gf\:\coprod_{z\in Z}\R e_z\rarrow
\coprod_{z\in Z}\R e_z$ is the identity map.
 So $f$~is an isomorphism of $\R$\+contramodules.
 Finally, $\Hom^\R(\R e_z,\R e_z)\simeq(e_z\R e_z)^\rop$ is
a local ring by yet another assumption in~(3).
\end{proof}

 We will say that a topological ring $\R$ is \emph{topologically
semiperfect} if it satisfies the equivalent conditions of
Theorem~\ref{topologically-semiperfect-rings}.

\begin{prop} \label{semiperfect-decomposition}
 Let\/ $\sA$ be an idempotent-complete topologically agreeable
additive category, and let $M\in\sA$ be an object.
 Then the topological ring\/ $\R=\End_\sA(M)^\rop$ is topologically
semiperfect if and only if the object $M$ can be decomposed as
a coproduct of objects with local endomorphism rings.
\end{prop}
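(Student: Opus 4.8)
The plan is to deduce the proposition directly from Theorem~\ref{topologically-semiperfect-rings} by transporting the relevant decompositions across the anti-equivalence $\Hom_\sA(-,M)\colon\Add(M)^\rop\rarrow\Prod(\R)$ provided by the Corollary to Theorem~\ref{pontryagin-thm}. I will use three features of that equivalence: it sends the object $M$ to the topological right $\R$\+module $\R$; being contravariant, it sends coproducts in $\Add(M)$ to products in $\Prod(\R)$ and vice versa; and for any $N\in\Add(M)$ it induces a ring anti-isomorphism $\End_\sA(N)\simeq\End_{\Modcs\R}(\Hom_\sA(N,M))$. The last point, combined with the fact that a ring is local if and only if its opposite ring is local (immediate from the characterization of local rings recalled at the start of Section~\ref{discrete-secn}), shows that $N$ has local endomorphism ring in $\sA$ exactly when $\Hom_\sA(N,M)$ has local endomorphism ring in $\Modcs\R$.

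For the ``only if'' direction, I would suppose $M\simeq\coprod_{z\in Z}N_z$ with each $\End_\sA(N_z)$ local. Since each $N_z$ is a retract of $M$ (the canonical morphism $\coprod_{w\in Z}N_w\rarrow N_z$ splits the $z$\+th coproduct injection), we have $N_z\in\Add(M)$, and applying $\Hom_\sA(-,M)$ turns the decomposition into an isomorphism of topological right $\R$\+modules $\R\simeq\prod_{z\in Z}\Hom_\sA(N_z,M)$ with each factor having local endomorphism ring; this is precisely condition~(2) of Theorem~\ref{topologically-semiperfect-rings}, so $\R$ is topologically semiperfect. For the converse, if $\R$ is topologically semiperfect, Theorem~\ref{topologically-semiperfect-rings}(2) gives an isomorphism of topological right $\R$\+modules $\R\simeq\prod_{z\in Z}\Q_z$ in which every $\Q_z$ has local endomorphism ring; each $\Q_z$ is a direct summand of $\R$ (of the form $e_z\R$) and hence lies in $\Prod(\R)$, as does the product, which is isomorphic to $\R$. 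Applying the inverse equivalence and the dictionary above, I obtain objects $N_z\in\Add(M)$ with local endomorphism rings and an isomorphism $M\simeq\coprod_{z\in Z}N_z$ in $\Add(M)$; since $\Add(M)$ is closed under set-indexed coproducts in $\sA$, the same coproduct computes the decomposition in $\sA$.

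The bookkeeping steps — that retracts of $M$ and the summands $e_z\R$ land in $\Add(M)$ and $\Prod(\R)$ respectively, that these subcategories are closed under the (co)products in play, and that $M$ corresponds to $\R$ under the equivalence — are all routine and were essentially recorded already in Section~\ref{pontryagin-secn} and Subsection~\ref{discrete-and-cs-modules-subsecn}. The one point that genuinely needs attention, and is the closest thing here to an obstacle, is that the equivalence is \emph{contravariant}, so it transports endomorphism rings to their opposites; this is harmless only because locality is a left--right symmetric property, which is what makes the characterization via Theorem~\ref{topologically-semiperfect-rings} applicable on both sides.
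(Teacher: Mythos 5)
Your argument is correct and is essentially the route the paper itself takes: the paper's (one-line) proof says to compare condition~(1) of Theorem~\ref{topologically-semiperfect-rings} with the covariant equivalence $\Add(M)\simeq\R\Contra_\proj$ of \cite[Theorem~3.14(iii)]{PS3} (or condition~(3) with \cite[Lemma~10.10]{PS3}), and your version is the same comparison transported through the Pontryagin duality of Section~\ref{pontryagin-secn}, namely condition~(2) matched against the contravariant equivalence $\Add(M)^\rop\simeq\Prod(\R)$. The bookkeeping you single out --- closure of $\Add(M)$ and $\Prod(\R)$ under the relevant (co)products, the correspondence $M\mapsto\R$, and the left--right symmetry of locality under the anti-equivalence --- is exactly what is needed, and it all checks out.
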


\begin{proof}
 Compare condition~(3) in Theorem~\ref{topologically-semiperfect-rings}
with~\cite[Lemma~10.10]{PS3}.
 Alternatively, compare condition~(1) in
Theorem~\ref{topologically-semiperfect-rings}
with~\cite[Theorem~3.14(iii)]{PS3}.
\end{proof}

\SectionSpecialChar{Structural Properties of Topologically Semiperfect
Topological Rings}%
{Structural Properties of Topologically Semiperfect Topological Rings}

 The notation $H(R)$ for the Jacobson radical of a ring $R$ was
introduced in Subsection~\ref{top-ss-and-perfect-subsecn} and already used
in Section~\ref{discrete-secn}.
 We denote the topological Jacobson radical of a topological ring
$\R$ by $\HH(\R)$; see Subsection~\ref{top-ss-and-perfect-subsecn}
for a brief discussion with references.
 The definition of the \emph{finite topology} on the endomorphism ring
of a module can be found in Subsection~\ref{agreeable-subsecn}.

\begin{lem} \label{no-idempotents-in-topological-Jacobson}
 Let\/ $\R$ be a topological ring and $e\in\HH(\R)$ be an element.
 Then the equation $e^2=e$ implies $e=0$.
\end{lem}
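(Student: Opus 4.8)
The plan is to imitate the classical observation that a nonzero idempotent cannot lie in the Jacobson radical of a ring, the twist being that $\HH(\R)$ is by definition the intersection of the \emph{open} maximal right ideals of $\R$, so the usual trick of inverting $1-e$ is unavailable (the element $1-e$ need not generate $\R$ as a right ideal). Instead I will argue by contradiction: assuming $e\neq0$, I will produce an open maximal right ideal of $\R$ that does not contain~$e$, contradicting $e\in\HH(\R)$.

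Since $\R$ is separated, $e\neq0$ yields an open right ideal $\I\subseteq\R$ with $e\notin\I$. The one place where the hypothesis $e^2=e$ enters is the fact, recorded in Subsection~\ref{discrete-and-cs-modules-subsecn}, that left multiplication by the idempotent $e$ is a continuous endomorphism of $\R$; consequently $\I_e:=\{r\in\R:er\in\I\}$ is again an open right ideal. Note that $e\notin\I_e$, since $e\cdot e=e\notin\I$. Moreover, for every $r\in\R$ one has $r-er\in\I_e$, because $e(r-er)=er-e^2r=0\in\I$; in other words, writing $\bar e$ for the image of $e$ in $\R/\I_e$, the coset $r+\I_e$ equals $\bar e\cdot r$. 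Hence $\R/\I_e$ is a nonzero cyclic right $\R$\+module generated by~$\bar e$.

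It now suffices to find a maximal $\R$\+submodule of $\R/\I_e$ not containing $\bar e$. The union of any chain of submodules avoiding $\bar e$ still avoids $\bar e$, so Zorn's lemma produces a submodule $N\subseteq\R/\I_e$ maximal with respect to the property $\bar e\notin N$. Any submodule strictly containing $N$ must then contain $\bar e$, hence contains $\bar e\R=\R/\I_e$, and so equals $\R/\I_e$; thus $N$ is a maximal submodule. Its preimage $\mathfrak{m}$ under the quotient map $\R\rarrow\R/\I_e$ is therefore a maximal right ideal of $\R$; it is open because it contains the open right ideal $\I_e$; and $e\notin\mathfrak{m}$ because $\bar e\notin N$. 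This contradicts $e\in\HH(\R)$, which is the intersection of all open maximal right ideals of $\R$. Therefore $e=0$.

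I expect no genuine obstacle here: the only substantive ingredient is the continuity of left multiplication by an idempotent, which is what makes $\I_e$ open; after that, the observations that $\R/\I_e$ is cyclic on $\bar e$ and that a cyclic module admits a maximal submodule avoiding any prescribed generator are entirely routine. The detour through the quotient $\R/\I_e$ is precisely what replaces the invertibility of $1-e$ available in the abstract setting.
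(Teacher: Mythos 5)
Your proof is correct, but it takes a genuinely different route from the paper. The paper's argument is a two-line deduction from an external result: by \cite[Lemma~3.8]{BPS}, right multiplication by $1-h$ is an injective self-map of $\R$ for any $h\in\HH(\R)$, so the identity $e(1-e)=e-e^2=0$ immediately forces $e=0$. You instead give a self-contained, first-principles argument: starting from separatedness you pick an open right ideal $\I$ with $e\notin\I$, pass to the open right ideal $\I_e=\{r: er\in\I\}$ (open by continuity of left multiplication, which indeed holds and is even noted for idempotents in Subsection~\ref{discrete-and-cs-modules-subsecn}), observe that $\R/\I_e$ is a nonzero cyclic module generated by $\bar e$ because $r-er\in\I_e$ for all $r$, and then use Zorn's lemma to manufacture an open maximal right ideal avoiding $e$, contradicting the definition of $\HH(\R)$ as the intersection of open maximal right ideals. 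All steps check out ($\I_e$ is a right ideal since $\I$ is, $e\notin\I_e$ uses $e^2=e$, and the Zorn argument for a maximal submodule avoiding a chosen generator of a cyclic module is standard). What your approach buys is independence from the cited lemma and a slightly stronger intermediate fact (every nonzero idempotent lies outside some open maximal right ideal); what the paper's approach buys is brevity, at the cost of invoking the injectivity statement from \cite{BPS}, which is itself the correct topological surrogate for the classical invertibility of $1-e$ that you rightly note is unavailable here. One small quibble: the idempotent hypothesis enters your argument not only through continuity of $e\cdot{-}$ (which holds for left multiplication by any element of a topological ring) but, more essentially, in the computations $e\cdot e=e\notin\I$ and $e(r-er)=0$.
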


\begin{proof}
 Following~\cite[Lemma~3.8]{BPS}, for any element $h\in\HH(\R)$,
the right multiplication with $1-h$ is an injective map
$\R\rarrow\R$.
 Hence the conditions $e\in\HH(\R)$ and $e(1-e)=0$ imply $e=0$.
\end{proof}

 Let $A$ be an associative ring and $i\:M\rarrow N$ be a morphism
of left $A$\+modules.
 One says that $i$~is a \emph{locally split monomorphism}
(see~\cite[Introduction and Section~4]{BPS} for a historical discussion
with references) if, for any finite set of elements $x_1$,~\dots,
$x_m\in M$, there exists an $A$\+module morphism $g\:N\rarrow M$
such that $gi(x_j)=x_j$ for all $j=1$,~\dots,~$m$.
 Clearly, any locally split monomorphism of $A$\+modules is
an injective map.

\begin{lem} \label{topological-Jacobson-via-locally-split-monos}
 Let $A$ be an associative ring, $M$ be a left $A$\+module, and\/
$\R=\End_A(M)^\rop$ be (the opposite ring to) the endomorphism ring
of $M$, endowed with the finite topology.
 Then an element $h\in\R$ belongs to the topological Jacobson
radical\/ $\HH(\R)\subset\R$ if and only if, for every element
$r\in\R$, the $A$\+module morphism\/ $M\larrow M\,:\!(1-hr)$
is a locally split monomorphism.
\end{lem}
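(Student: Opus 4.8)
The plan is to turn the locally-split-monomorphism condition on $M\larrow M\,:\!(1-hr)$ into a congruence condition modulo the open right ideals of $\R$, and then to read off the equivalence directly from the definition of $\HH(\R)$ as the intersection of the open maximal right ideals of $\R$. For the translation, recall that in the finite topology the annihilators $\mathrm{Ann}(F)=\{\,s\in\R : xs=0 \text{ for all } x\in F\,\}$ of finite subsets $F\subset M$ form a base of neighborhoods of zero consisting of open right ideals, so every open right ideal of $\R$ contains one of this form. Keeping the left/right bookkeeping straight through the opposite ring — the composite of endomorphisms of $M$ ``first $(1-hr)$, then $g$'' corresponds to the product $(1-hr)g$ taken in $\R=\End_A(M)^\rop$ — one checks that $M\larrow M\,:\!(1-hr)$ is a locally split monomorphism if and only if for every finite $F\subset M$ there is $g\in\R$ with $(1-hr)g-1\in\mathrm{Ann}(F)$, equivalently, if and only if for every open right ideal $\I\subset\R$ there is $g\in\R$ with $(1-hr)g\equiv1\pmod{\I}$. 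Existence of such a $g$ is precisely the assertion that the right ideal $(1-hr)\R+\I$ equals $\R$, so the condition on $h$ in the lemma amounts to: $(1-hr)\R+\I=\R$ for every $r\in\R$ and every open right ideal $\I\subset\R$.

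Granting this reformulation, I would argue the ``only if'' direction as follows. Let $h\in\HH(\R)$ and fix $r\in\R$ and an open right ideal $\I$. If $(1-hr)\R+\I$ were proper, then, being a right ideal containing the open right ideal $\I$, it would be open and hence contained in a maximal right ideal $\mathfrak m$, which in turn is open. Since $\HH(\R)\subseteq\mathfrak m$ by definition, we get $hr\in\mathfrak m$; but also $1-hr\in(1-hr)\R\subseteq\mathfrak m$, so adding gives $1\in\mathfrak m$, a contradiction. Hence $(1-hr)\R+\I=\R$; as this holds for every $r$ and every $\I$, the reformulation shows that each $M\larrow M\,:\!(1-hr)$ is a locally split monomorphism.

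For the converse, suppose $(1-hr)\R+\I=\R$ for all $r\in\R$ and all open right ideals $\I$, and let $\mathfrak m$ be an arbitrary open maximal right ideal; it suffices to show $h\in\mathfrak m$. If $h\notin\mathfrak m$, then $h\R+\mathfrak m$ strictly contains $\mathfrak m$, so by maximality $h\R+\mathfrak m=\R$, whence $1-hr\in\mathfrak m$ for some $r\in\R$. Picking an open right ideal $\I\subseteq\mathfrak m$ and then $g$ with $(1-hr)g\equiv1\pmod{\I}$, we get $1-(1-hr)g\in\I\subseteq\mathfrak m$, while $(1-hr)g\in\mathfrak m$ because $1-hr\in\mathfrak m$ and $\mathfrak m$ is a right ideal; adding yields $1\in\mathfrak m$, again a contradiction. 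Therefore $h$ lies in every open maximal right ideal, i.e.\ $h\in\HH(\R)$.

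The one step I expect to demand genuine care is the reformulation in the first paragraph: tracking the passage through $\End_A(M)$ versus its opposite, identifying ``$g$ retracts $(1-hr)$ on a prescribed finite set of elements of $M$'' with the congruence ``$(1-hr)g\equiv1$ modulo the corresponding annihilator'', and passing between basic open right ideals and arbitrary ones. Once that dictionary is in place, both implications are routine manipulations with maximal right ideals, using only that an open maximal right ideal contains $\HH(\R)$ by definition, that any right ideal containing an open right ideal is open, and that every proper right ideal extends to a maximal one.
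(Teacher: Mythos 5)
Your proof is correct and follows the same overall strategy as the paper: reduce the locally-split-monomorphism condition, via the annihilator ideals that form a base of the finite topology, to the statement that $(1-hr)\R+\I=\R$ for all $r\in\R$ and all open right ideals $\I\subset\R$, and then identify the latter with membership in $\HH(\R)$. The only difference is that the paper imports the characterization ``$h\in\HH(\R)$ iff $(1-hr)\R+\I=\R$ for all $r$ and all open right ideals $\I$'' from the references \cite[Lemma~7.2(iii)]{Pproperf} and \cite[Proposition~1.7]{Gre}, whereas you prove it directly from the definition of $\HH(\R)$ as the intersection of the open maximal right ideals; your two Zorn's-lemma arguments for that equivalence are sound (the key observations being that any right ideal containing an open right ideal is itself open, and that a proper right ideal extends to a maximal one, which is then open). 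One small slip of phrasing: in the ``only if'' direction it is properness, not openness, of $(1-hr)\R+\I$ that guarantees its containment in a maximal right ideal; openness is only used afterwards to conclude that this maximal right ideal is open. The translation dictionary in your first paragraph, including the bookkeeping through the opposite ring (the product $(1-hr)g$ in $\R=\End_A(M)^\rop$ is ``first $1-hr$, then $g$'' acting on the right), matches the paper's.
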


\begin{proof}
 Following~\cite[Lemma~7.2(iii)]{Pproperf}
or~\cite[Proposition 1.7]{Gre}, an element $h\in\R$
belongs to $\HH(\R)$ if and only if, for every $r\in\R$ and
every open right ideal $\I\subset\R$, one has $(1-hr)\R+\I=\R$.
 Since the annihilators of finitely generated submodules of $M$
form a base of neighborhoods of zero in $\R$, one can assume that
$\I$ is such an annihilator.
 Let $E\subset M$ be a finitely generated submodule and
$\I=\Hom_A(M/E,M)\subset\R$ be its annihilator.
 Then two elements of $\R$ differ by an element from $\I$ if and only
if the related two endomorphisms of the $A$\+module $M$ agree in
the restriction to~$E$.
 One easily concludes that the equation $(1-hr)\R+\I=\R$ holds for
all $\I$ if and only if $M\larrow M\,:\!(1-hr)$ is a locally split
monomorphism of $A$\+modules.
\end{proof}

 Let $\sA$ be a topologically agreeable additive category and
$M=\coprod_{z\in Z}M_z$ be an object of $\sA$ decomposed into
a coproduct indexed by a set~$Z$.
 Then elements of the topological ring $\R=\End_\sA(M)^\rop$ can be
represented by matrices $(r_{w,z})_{w,z\in Z}$ whose entries are
morphisms $M_z\larrow M_w\,:\!r_{w,z}$.
 More precisely, the ring $\R$ can be described as the ring of all
\emph{row-summable} matrices of this form~\cite[first paragraph of
the proof of ``only if'' implication in Theorem~10.4]{PS3}.
 (We refer to Subsection~\ref{agreeable-subsecn} for the background on
topologically agreeable categories and summable families of morphisms.)

\begin{prop} \label{matrices-of-noninvertibles}
 Let\/ $\sA$ be a topologically agreeable additive category and
$M=\coprod_{x\in Z}M_z$ be an object of\/ $\sA$ decomposed into
a coproduct of objects with local endomorphism rings\/
$\R_z=\End_\sA(M)^\rop$.
 Consider the subset\/ $\HH'\subset\R$ consisting of all the matrices
$(h_{w,z})_{w,z\in Z}$ such that, for all $w$, $z\in Z$, the morphism
$M_z\larrow M_w\,:\!h_{w,z}$ is \emph{not} an isomorphism.
 Then\/ $\HH'$ is a strongly closed two-sided ideal in\/ $\R$,
and the quotient ring\/ $\S=\R/\HH'$ is topologically semisimple in
the quotient topology.
\end{prop}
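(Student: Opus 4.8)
The plan is to reduce all assertions to entrywise statements about morphisms between the objects $M_z$, and then read them off from the description of $\R$ as the ring of row-summable matrices $(r_{w,z})_{w,z\in Z}$ recalled before the statement. \emph{Entrywise preliminaries.} First I would record that each $M_z$ is indecomposable and nonzero, since its endomorphism ring $\R_z$ is local and hence has no nontrivial idempotents; consequently, if a morphism $M_z\larrow M_w$ is an isomorphism, then $M_z\simeq M_w$ (a one-sided inverse exhibits one of the indecomposables $M_z$, $M_w$ as a nonzero direct summand of the other). Fix a set $\Lambda$ of isomorphism classes, write $Z=\bigsqcup_{\lambda\in\Lambda}Z_\lambda$ accordingly, and for each $\lambda$ choose a representative $N_\lambda$ together with isomorphisms $\iota_z\:N_\lambda\to M_z$ for $z\in Z_\lambda$; let $\mathfrak m_\lambda\subset\R_{N_\lambda}=\End_\sA(N_\lambda)^\rop$ be the maximal ideal and $D_\lambda=\R_{N_\lambda}/\mathfrak m_\lambda$ the residue division ring. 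As noted after Theorem~\ref{topologically-semiperfect-rings}, $\mathfrak m_\lambda$ is open, hence also closed, in the complete separated right linear topology on $\R_{N_\lambda}$. For $w,z$ in one class $Z_\lambda$, transporting a morphism $M_z\larrow M_w$ along $\iota_w$ and~$\iota_z$ yields an element of $\R_{N_\lambda}$ which lies in $\mathfrak m_\lambda$ precisely when the original morphism is a nonisomorphism; for $w,z$ in distinct classes every morphism $M_z\larrow M_w$ is a nonisomorphism. From this, together with $\mathfrak m_\lambda$ being a closed two-sided ideal and the indecomposability argument above, I obtain the three stability properties that do all the work: a sum of two nonisomorphisms $M_z\larrow M_w$ is a nonisomorphism; a composite of a nonisomorphism with an arbitrary morphism, on either side, is a nonisomorphism; and the topological limit of finite partial sums of nonisomorphisms $M_z\larrow M_w$ --- in particular, the sum $\sum_v h_v$ along any zero-convergent family of nonisomorphisms --- is again a nonisomorphism.

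\emph{$\HH'$ is a closed two-sided ideal.} In the matrix description the $(w,z)$-entry of a product of two matrices is the zero-convergent sum over $v\in Z$ of the composites of their $(w,v)$- and $(v,z)$-entries; hence the stability properties show at once that $\HH'$ is closed under addition and under left and right multiplication by arbitrary elements of $\R$, i.e.\ that $\HH'$ is a two-sided ideal. It is topologically closed because it is the intersection, over all pairs $(w,z)$, of the preimages under the continuous coordinate projections (composition is continuous in a topologically agreeable category) of the sets of nonisomorphisms $M_z\larrow M_w$, each of which is closed: it is the whole group when $M_z\not\simeq M_w$, and otherwise the preimage of the closed set $\mathfrak m_\lambda$ under a map obtained by composing with the fixed isomorphisms.

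\emph{The quotient.} Assigning to a matrix $(a_{w,z})$ the family, indexed by $\lambda\in\Lambda$, of the $Z_\lambda\times Z_\lambda$ matrices over $D_\lambda$ whose $(w,z)$-entry is the residue in $D_\lambda$ of the transport to $\R_{N_\lambda}$ of the entry $a_{w,z}$, defines a map $\rho\:\R\to\prod_{\lambda}\Mat_{Z_\lambda\times Z_\lambda}(D_\lambda)$. Since every row of $(a_{w,z})$ is zero-convergent and $\mathfrak m_\lambda$ is open, only finitely many entries in each row have nonzero residue, so $\rho$ lands in the product of the topological rings of \emph{row-finite} $Z_\lambda\times Z_\lambda$ matrices over the $D_\lambda$. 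One then checks that $\rho$ is a unital ring homomorphism --- additivity is clear, and multiplicativity follows from multiplicativity of the residue maps once one observes that matrix entries factoring through a summand from a different isomorphism class are nonisomorphisms and so have zero residue --- with kernel exactly $\HH'$ (a matrix has all residues zero iff all its entries are nonisomorphisms, by the entrywise preliminaries), and that $\rho$ is surjective: a row-finite matrix over $D_\lambda$ is lifted entrywise through a set-section of $\R_{N_\lambda}\to D_\lambda$ sending $0$ to $0$ and transported back by the $\iota$'s, the result being row-finite, in particular row-summable, hence an element of $\R$. Finally, $\rho$ is continuous (its coordinates are continuous, the residue maps being continuous as $\mathfrak m_\lambda$ is open) and open, so it induces an isomorphism of topological rings between $\R/\HH'$ and the product $\prod_\lambda(\text{row-finite }Z_\lambda\times Z_\lambda\text{ matrices over }D_\lambda)$. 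In particular $\R/\HH'$ is complete, and it is topologically semisimple by the classification of topologically semisimple topological rings~\cite[Theorem~3.10]{IMR}, \cite[Theorem~6.2]{PS3}. It remains to verify that $\R[[X]]\rarrow(\R/\HH')[[X]]$ is surjective for every set $X$: via the isomorphism just obtained and the entrywise section above (which, sending $0$ to $0$, preserves the ``eventually zero'' pattern of the entries), a zero-convergent $X$-family of matrices in the quotient lifts to a zero-convergent $X$-family of row-finite matrices in $\R$. Together with closedness and completeness, this shows $\HH'$ is strongly closed.

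\emph{Main obstacle.} I expect the delicate point to be the topological bookkeeping in the third step --- that $\rho$ is open, equivalently that the open right ideals of $\R$ restrict to a base of neighborhoods of zero in the product of the finite topologies on the matrix rings; completeness of $\R/\HH'$ and the lifting of zero-convergent families then follow. For the endomorphism ring of a module with its finite topology this is concrete: a finitely generated subobject of $\coprod_z M_z$ lies in $\coprod_{z\in F}M_z$ for some finite $F$, so its annihilator contains the ideal of matrices vanishing on the summands indexed by $F$, which is a basic neighborhood of zero in the row-finite matrix topology; in the general topologically agreeable setting one invokes instead the description of the topology on $\End_\sA(\coprod_z M_z)^\rop$ from~\cite[Section~10]{PS3}. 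The remaining, purely ring-theoretic parts follow the pattern of the discrete case treated in Proposition~\ref{discrete-jacobson-as-matrices-of-nonisos}.
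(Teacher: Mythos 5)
Your proposal is correct and follows essentially the same route as the paper's proof, which itself defers to the ``only if'' part of \cite[Theorem~10.4]{PS3}: both arguments observe that the sets of nonisomorphisms $\HH'_{w,z}\subset\Hom_\sA(M_w,M_z)$ are open (hence closed) subgroups because the maximal ideal of each local endomorphism ring is open in any separated right linear topology, deduce that $\HH'$ is a closed two-sided ideal, identify $\R/\HH'$ with a topological product over the isomorphism classes of rings of row-finite matrices over the residue division rings (so that topological semisimplicity follows from \cite[Theorem~6.2]{PS3}), and establish strong closedness by exhibiting a continuous zero-preserving section $\S\rarrow\R$. Your version merely spells out explicitly the entrywise verifications that the paper delegates to the citation of \cite{PS3}.
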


\begin{proof}
 We follow the arguments in the proof of the ``only if'' part
of~\cite[Theorem~10.4]{PS3} in~\cite[Section~10]{PS3}.
 Denote by $X$ the set of all isomorphism classes of the objects
$M_z\in\sA$.
 For every element $x\in X$, let $Y_x\subset Z$ denote the full
preimage of the element~$x$ under the natural surjective map
$Z\rarrow X$ assigning to an index $z\in Z$ the isomorphism class
of the object~$M_z$.
 So, given $z$ and $w\in Z$, we have $M_z\simeq M_w$ if and only if
there exists $x\in X$ such that $z$, $w\in Y_x$.

 Given $z$ and $w\in Z$, consider the topological group of morphisms
$\R_{w,z}=\Hom_\sA(M_w,M_z)$, and denote by $\HH'_{w,z}\subset\R_{w,z}$
the subset of all nonisomorphisms.
 By assumption, $\HH'_{z,z}$ is a two-sided ideal in~$\R_{z,z}$;
following the discussion after the formulation of
Theorem~\ref{topologically-semiperfect-rings}, it is
an open two-sided ideal.
 Similarly to the argument in~\cite{PS3}, it follows that $\HH'_{w,z}$
is an open subgroup in $\R_{w,z}$ for all $z$, $w\in Z$ (in fact,
one has $\HH'_{w,z}=\R_{w,z}$ when $M_z$ and $M_w$ are not isomorphic).
 As in~\cite{PS3}, one concludes that $\HH'$ is a closed subgroup
in~$\R$, and further that $\HH'$ is a two-sided ideal.

 Given $y\in Z$, denote by $D_y$ the discrete skew-field
$\R_{y,y}/\HH'_{y,y}$.
 Given $x\in X$, we choose isomorphisms between all the objects $M_y$,
\,$y\in Y_x$, in a compatible way, and put $D_x=D_y$.
 Similarly to the argument in~\cite{PS3}, the quotient ring
$\S=\R/\HH'$ with its quotient topology is described as the topological
product over $x\in X$ of the topological rings of $Y_x$\+sized
row-finite matrices with the entries in~$D_x$.
 By~\cite[Theorem~6.2(4)]{PS3}, the topological ring $\S$ is
topologically semisimple.

 Finally, the construction of a continuous section $s\:\S\rarrow\R$
as in the next-to-last paragraph of~\cite[Theorem~10.4, proof of
``only if'']{PS3} shows that the subgroup $\HH'\subset\R$ is
strongly closed.
\end{proof}

\begin{thm} \label{topologically-semiperfect-structural-properties}
 Let\/ $\R$ be a topologically semiperfect topological ring.
 Then the following properties hold:
\begin{enumerate}
\renewcommand{\theenumi}{\alph{enumi}}
\item the topological Jacobson radical\/ $\HH=\HH(\R)$ is strongly
closed in\/~$\R$;
\item the topological quotient ring\/ $\R/\HH$ is topologically
semisimple.
\end{enumerate}
\end{thm}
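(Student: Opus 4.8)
The plan is to reduce the theorem to Proposition~\ref{matrices-of-noninvertibles} by identifying the explicit ideal $\HH'$ constructed there with the topological Jacobson radical $\HH(\R)$. First I would invoke condition~(3) of Theorem~\ref{topologically-semiperfect-rings}: since $\R$ is topologically semiperfect, there is a set $Z$ and a zero-convergent complete family $(e_z)_{z\in Z}$ of pairwise orthogonal idempotents with each $e_z\R e_z$ local. By the equivalence (3)$\Longleftrightarrow$(2) we may view $\R$ as the endomorphism ring of the object $M=\coprod_{z\in Z}M_z$ in the topologically agreeable category $\sA=\Modcs\R$ (or directly as the ring of row-summable matrices $(r_{w,z})$ with $M_z=e_z\R$, $\R_z=e_z\R e_z$ local), so Proposition~\ref{matrices-of-noninvertibles} applies verbatim. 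This immediately yields a strongly closed two-sided ideal $\HH'\subset\R$, defined as the set of matrices all of whose entries are nonisomorphisms, such that $\S=\R/\HH'$ is topologically semisimple in the quotient topology. So both (a) and (b) will follow once I show $\HH'=\HH(\R)$.

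For the inclusion $\HH(\R)\subseteq\HH'$, I would use that $\HH'$ is strongly closed with topologically semisimple quotient: the topological Jacobson radical of a topologically semisimple ring is zero (its discrete simple modules separate points, or equivalently $\S\Contra$ is semisimple), and the quotient map $\R\to\S$ is continuous and open, hence pulls open maximal right ideals of $\S$ back to open maximal right ideals of $\R$; intersecting gives $\HH(\R)\subseteq\HH'$. Concretely: any open maximal right ideal of $\S$ has preimage an open maximal right ideal of $\R$ containing $\HH'$, and since $\HH(\S)=0$ these preimages intersect in $\HH'$, so $\HH(\R)$, being the intersection over \emph{all} open maximal right ideals of $\R$, is contained in $\HH'$.

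For the reverse inclusion $\HH'\subseteq\HH(\R)$, I would argue that every open maximal right ideal $\mathfrak m\subset\R$ contains $\HH'$, so their intersection does too. Equivalently, I want to show $\HH'$ annihilates every discrete simple right $\R$\+module, i.e.\ acts as zero on $\R/\mathfrak m$. The cleanest route is to use the characterization from Lemma~\ref{topological-Jacobson-via-locally-split-monos} (or rather its proof via \cite[Lemma~7.2(iii)]{Pproperf}): $h\in\HH(\R)$ iff $(1-hr)\R+\I=\R$ for every $r\in\R$ and every open right ideal $\I$. Given $h\in\HH'$ and $r\in\R$, the element $hr$ again lies in $\HH'$ (as $\HH'$ is a right ideal), so $1-hr\equiv 1\pmod{\HH'}$; since $\S=\R/\HH'$ is topologically semisimple its topological Jacobson radical is zero, so the image of $1-hr$ is a unit in the appropriate completed-matrix sense — more precisely, in the topologically semisimple ring $\S$ every element congruent to $1$ modulo nothing is already $1$ plus a radical element, hence $(1-hr)$ is left invertible modulo any open right ideal of $\S$, and pulling back through the open surjection $\R\to\S$ gives $(1-hr)\R+\I=\R$ for every open $\I\supseteq\ker$, hence for every open $\I$ after intersecting with $\ker(\R\to\S)$, which is itself $\HH'\subseteq$ (already shown)$\HH(\R)\subseteq$ every open maximal ideal.

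The main obstacle I anticipate is the last step: making rigorous that ``$1-hr$ with $hr\in\HH'$ is invertible enough'' to conclude $(1-hr)\R+\I=\R$. The danger is circularity — one wants $\HH'\subseteq\HH(\R)$ but the natural proof seems to use properties of $\HH(\R)$. I would sidestep this by working entirely inside $\S=\R/\HH'$: there $hr$ maps to $0$, so $1-hr$ maps to the unit $1$, and the quotient map being open means $(1-hr)\R+\ker(\R\to\S)=\R$ and more generally $(1-hr)\R+\J=\R$ for every open right ideal $\J$ of $\R$ containing a neighborhood that maps into a given open ideal of $\S$; since $\S$ is topologically semisimple (so \emph{every} right ideal congruence question is trivial, radical being zero) and $\ker(\R\to\S)=\HH'$ is strongly closed, the preimages of open right ideals of $\S$ together with $\HH'$ generate all open right ideals up to the topology, giving $(1-hr)\R+\I=\R$ for all open $\I$. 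Then \cite[Lemma~7.2(iii)]{Pproperf} (cf.\ Lemma~\ref{topological-Jacobson-via-locally-split-monos}) yields $h\in\HH(\R)$. Combining both inclusions gives $\HH(\R)=\HH'$, and then (a) and (b) are exactly the two conclusions of Proposition~\ref{matrices-of-noninvertibles}, completing the proof.
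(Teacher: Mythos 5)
Your overall strategy coincides with the paper's: both reduce to Proposition~\ref{matrices-of-noninvertibles} and then identify the ideal $\HH'$ of matrices of nonisomorphisms with $\HH(\R)$. (One small imprecision at the start: in $\Modcs\R$ the decomposition $\R\simeq\prod_{z}e_z\R$ is a \emph{product}, not a coproduct, so Proposition~\ref{matrices-of-noninvertibles} does not apply there ``verbatim''; the paper instead realizes $\R$ as $\End_A(M)^\rop$ of a module $M=\bigoplus_{z\in Z}M_z$ via \cite[Corollary~4.4]{PS3} and Proposition~\ref{semiperfect-decomposition}.) Your proof of $\HH(\R)\subseteq\HH'$ is fine and essentially the paper's argument in a different dress. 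The genuine gap is in the reverse inclusion $\HH'\subseteq\HH(\R)$, exactly where you anticipated trouble. Knowing that $1-hr$ maps to $1$ in $\S=\R/\HH'$ only yields $(1-hr)\R+\J=\R$ for open right ideals $\J$ \emph{containing} $\HH'$, i.e.\ for preimages of open right ideals of~$\S$. The criterion of \cite[Lemma~7.2(iii)]{Pproperf} demands $(1-hr)\R+\I=\R$ for \emph{every} open right ideal $\I$, and a general open right ideal need not contain $\HH'$: the ideal $\HH'$ is closed but typically not open (in Example~\ref{topological-jacobson-orthogonalization-counterex}, the annihilator of a single basis vector is an open right ideal not containing $\HH=t\R$). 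Your claim that ``the preimages of open right ideals of $\S$ together with $\HH'$ generate all open right ideals up to the topology'' is not true, and the phrase ``$\HH'\subseteq$ (already shown) $\HH(\R)$'' invokes precisely the inclusion under proof, so the circularity is not actually sidestepped.

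The missing idea is a reduction to finite sub-sums, where classical semiperfect ring theory supplies genuine invertibility. By Lemma~\ref{topological-Jacobson-via-locally-split-monos} it suffices to show that $1-h$ is a locally split monomorphism of $A$\+modules for each $h\in\HH'$. Given a finitely generated submodule $E\subset M$, choose a finite subset $Z_0\subset Z$ with $E\subset K=\bigoplus_{z\in Z_0}M_z$. The endomorphism of $K$ obtained by including $K$ into $M$, applying $1-h$, and projecting back onto $K$ has the form $1-t$, where $t$ is a finite matrix of nonisomorphisms; by Lemma~\ref{endomorphisms-discrete-semiperfect} the ring $R=\End_A(K)^\rop$ is semiperfect, by Proposition~\ref{discrete-jacobson-as-matrices-of-nonisos} one has $t\in H(R)$, hence $1-t$ is invertible in $R$ and the restriction of $1-h$ to $K$ is a split monomorphism. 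This is the step that converts ``congruent to $1$ modulo $\HH'$'' into actual invertibility on finite pieces, which is what the topological criterion requires; without it the inclusion $\HH'\subseteq\HH(\R)$, and with it parts~(a) and~(b), remain unproved.
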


\begin{proof}
 By~\cite[Corollary~4.4]{PS3}, for any topological ring $\R$ there
exists an associative ring $A$ and a left $A$\+module $M$ such that\/
$\R$ is isomorphic, as a topological ring, to the endomorphism ring
$\End_A(M)^\rop$ endowed with the finite topology.
 By Proposition~\ref{semiperfect-decomposition}, if the topological
ring $\R$ is topologically semiperfect, then the left $A$\+module $M$
decomposes into a direct sum $M=\bigoplus_{z\in Z}M_z$ of left
$A$\+modules $M_z$ with local endomorphism rings
$\R_z=\End_A(M_z)^\rop$.
 
 Now the construction of Proposition~\ref{matrices-of-noninvertibles}
provides a strongly closed two-sided ideal $\HH'\subset\R$ with
a topologically semisimple topological quotient ring $\S=\R/\HH'$.
 Let us show that $\HH=\HH'$; this would obviously imply both
assertions of the theorem.

 First of all, every nonzero element of $\S$ acts nontrivially on some
simple discrete right $\S$\+module (since $\S$ is topologically
semisimple); see~\cite[Theorem~6.2(2)]{PS3}.
 Hence the ideal $\HH'\subset\R$ is the intersection of
the annihilators of those simple discrete right $\R$\+modules on
which $\R$ acts through~$\S$.
 Since $\HH$ is the intersection of the annihilators of all simple
discrete right $\R$\+modules~\cite[Lemma~7.2(ii)]{Pproperf}, it
follows that $\HH\subset\HH'$.

 To prove the inverse inclusion, we use
Lemma~\ref{topological-Jacobson-via-locally-split-monos}.
 It suffices to show that, for every $h\in\HH'$, the endomorphism
$M\larrow M\,:\!(1-h)$ is a locally split monomorphism of $A$\+modules.
 Let $E\subset M$ be a finitely generated submodule.
 Then there exists a finite subset $Z_0\subset Z$ such that
$E\subset\bigoplus_{z\in Z_0}M_z\subset M$.
 Put $K=\bigoplus_{z\in Z_0}M_z$ and
$L=\bigoplus_{w\in Z\setminus Z_0}M_w$; so $M=K\oplus L$.
 Consider the direct summand inclusion $M\hookleftarrow K$ and
the direct summand projection $K\twoheadleftarrow M$ along~$L$.
 We are interested in the composition $M\overset{1-h}\larrow M
\hookleftarrow K$; it suffices to show that this composition
$M\larrow K$ is a split monomorphism of $A$\+modules
(cf.~\cite[Lemma~4.4]{BPS}).
 For this purpose, we will check that the composition
$K\twoheadleftarrow M\overset{1-h}\larrow M\hookleftarrow K$ is
an isomorphism.

 Consider the endomorphism ring $R=\End_A(K)^\rop$.
 By Lemma~\ref{endomorphisms-discrete-semiperfect}, the ring $R$ is
semiperfect.
 Proposition~\ref{discrete-jacobson-as-matrices-of-nonisos} computes
the Jacobson radical $H(R)$ as the set of all $n\times n$ matrices
of nonisomorphisms $t=(t_{j,i})_{j,i\in Z_0}$, where $M_i\larrow M_j
\,:\!t_{j,i}$ and $n$~is the cardinality of~$Z_0$.
 Now the composition $K\twoheadleftarrow M\overset h\larrow M
\hookleftarrow K$ is such a matrix of nonisomorphisms (since
$h\in\HH'$).
 It remains to recall that the element $1-t$ is invertible in $R$
for every $t\in H(R)$.
\end{proof}

\begin{rem}
 Following~\cite[Section~10]{PS3}, a topological ring $\R$ is called
\emph{topologically left perfect} if its topological Jacobson radical
$\HH=\HH(\R)$ is a topologically left T\+nilpotent strongly closed ideal
in $\R$ and the topological quotient ring $\S=\R/\HH$ is topologically
semisimple.
 Any topological ring $\R$ can be obtained as the endomorphism ring of
a module $M$, with the finite topology on the endomorphism
ring~\cite[Corollary~4.4]{PS3}.
 Hence, comparing~\cite[Theorem~10.4]{PS3} with
Proposition~\ref{semiperfect-decomposition} above, one can see that
any topologically left perfect topological ring is topologically
semiperfect (as any perfect decomposition of a module is a decomposition
into a direct sum of modules with local endomorphism rings).
 Thus it follows from
Theorem~\ref{topologically-semiperfect-structural-properties} that
a topological ring $\R$ is topologically left perfect if and only if
it is topologically semiperfect and its topological Jacobson radical
$\HH$ is topologically left T\+nilpotent.

 A module $M$ over a ring $R$ is said to be \emph{coperfect} if any
descending chain of cyclic submodules in $R$ terminates, or
equivalently, any descending chain of finitely generated submodules
in $R$ terminates~\cite[Theorem~2]{Bj}.
 A topological ring $\R$ is called \emph{topologically right coperfect}
if all discrete right $\R$\+modules are coperfect.

 Any topologically left perfect topological ring is topologically right
coperfect~\cite[Theorem~14.4\,(iv)$\Rightarrow$(v)]{PS3}.
 By~\cite[Lemmas~7.4 and~7.5]{Pproperf}, the topological Jacobson
radical of any topologically right coperfect topological ring is
topologically left T\+nilpotent.
 Therefore, a topological ring $\R$ is topologically left perfect if and
only if it is simultaneously topologically semiperfect and topologically
right coperfect.

 It is conjectured in~\cite[Conjecture~14.3]{PS3} that a topological
ring is topologically left perfect if and only if it is topologically
right coperfect.
 By~\cite[Remark~14.9]{PS3}, this conjecture is equivalent to a positive
answer to a question of Angeleri H\"ugel and
Saor\'{\i}n~\cite[Question~1 in Section~2]{AS}.
 In view of the arguments above, it would be sufficient to prove that
any topologically right coperfect ring is topologically semiperfect
in order to establish the conjecture.
\end{rem}

\Section{Surjective Continuous Ring Homomorphisms}
\label{surjective-homomorphisms-secn}

 The results of this section are inspired by~\cite[Proposition~2.12
and Theorem~2.14]{Gre}.

 Let $R$ be an associative ring.
 An idempotent element $e\in R$ is said to be \emph{local} if the ring
$eRe$ is local.
 Since any nonzero quotient ring of a local ring is local, it follows
that any surjective ring homomorphism takes local idempotents to
local idempotents or zero.

\begin{prop} \label{continuous-image-stays-top-semiperfect}
 Let $g\:\R\rarrow\S$ be a surjective continuous homomorphism of
topological rings.
 Assume that the topological ring\/ $\R$ is topologically semiperfect.
 Then the topological ring\/ $\S$ is topologically semiperfect as well.
\end{prop}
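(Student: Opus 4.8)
The plan is to verify condition~(3) of Theorem~\ref{topologically-semiperfect-rings} for $\S$ by transporting along $g$ the data witnessing topological semiperfectness of $\R$. Since $\R$ is topologically semiperfect, there is a set $Z$ and a zero-convergent family $\mathbf e = (e_z)_{z\in Z}\in\R[[Z]]$ of pairwise orthogonal idempotents with $\sum_{z\in Z}e_z=1$ in $\R$ and $e_z\R e_z$ local for every $z$. I would set $f_z=g(e_z)\in\S$ and show that, after discarding the indices $z$ with $f_z=0$, the family $(f_z)$ witnesses topological semiperfectness of $\S$.

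First, the family $(f_z)_{z\in Z}$ is zero-convergent in $\S$: given any open right ideal $\J\subset\S$, its preimage $g^{-1}(\J)$ is an open right ideal of $\R$ (open since $g$ is continuous, a right ideal since $g$ is a ring homomorphism), so $e_z\in g^{-1}(\J)$, i.e.\ $f_z\in\J$, for all but finitely many $z$. Next, each $f_z$ is idempotent and the $f_z$ are pairwise orthogonal, since $g$ is multiplicative: $f_z^2=g(e_z^2)=g(e_z)=f_z$ and $f_zf_w=g(e_ze_w)=0$ for $z\ne w$. Finally $\sum_{z\in Z}f_z=1$ in $\S$: the partial sums $\sum_{z\in F}e_z$ over finite subsets $F\subseteq Z$ form a net converging to $1$ in $\R$, continuity of $g$ sends this net to the net $\sum_{z\in F}f_z$ converging to $g(1)=1$ in $\S$, and since $(f_z)$ is zero-convergent this limit is by definition $\sum_{z\in Z}f_z$ (here we use that $\S$ is complete and separated).

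It remains to handle locality and the possibly vanishing $f_z$. As recalled just before the statement, a surjective ring homomorphism carries a local idempotent to a local idempotent or to zero, so put $Z'=\{z\in Z:f_z\ne 0\}$. Deleting the zero entries changes neither the sum (still $1$) nor the zero-convergence nor the orthogonality, so $(f_z)_{z\in Z'}$ is a zero-convergent family of pairwise orthogonal idempotents summing to $1$. For $z\in Z'$, surjectivity of $g$ gives $f_z\S f_z=g(e_z)\S g(e_z)=g(e_z\R e_z)$, which is a nonzero quotient ring of the local ring $e_z\R e_z$ (nonzero because $f_z$ is its unit and $f_z\ne 0$), hence itself local. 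Thus $\S$ satisfies Theorem~\ref{topologically-semiperfect-rings}(3) and is topologically semiperfect.

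The only points needing a little care — and the closest thing to an obstacle — are the interchange of $g$ with the infinite summation, handled by continuity of $g$ together with completeness and separatedness of $\S$ as above, and the bookkeeping needed to drop the vanishing idempotents; everything else is immediate from the multiplicativity and continuity of $g$.
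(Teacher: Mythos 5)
Your proposal is correct and follows essentially the same route as the paper: verify condition~(3) of Theorem~\ref{topologically-semiperfect-rings} for $\S$ by pushing the complete zero-convergent family of orthogonal local idempotents forward along $g$, discarding the indices where the image vanishes, and using continuity of $g$ for zero-convergence and for the identity $\sum_z g(e_z)=g(1)=1$. The paper's proof is terser but relies on exactly the same observations, including the remark preceding the statement that surjective ring homomorphisms send local idempotents to local idempotents or zero.
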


\begin{proof}
 We use the characterization of topologically semiperfect topological
rings given by Theorem~\ref{topologically-semiperfect-rings}(3).
 Let $\mathbf e=(e_z\in\R)_{z\in Z}$ be a zero-convergent family of
pairwise orthogonal local idempotents in $\R$ such that
$\sum_{z\in Z}e_z=1$ in~$\R$.
 Denote by $W\subset Z$ the set of all indices $w\in Z$ for which
$g(e_w)\ne0$ in~$\S$.
 For every $w\in W$, put $f_w=g(e_w)\in\S$.
 Then $\mathbf f=(f_w\in\S)_{w\in W}$ is a family of pairwise
orthogonal local idempotents in~$\S$.
 Since the ring homomorphism~$g$ is continuous by assumption,
the family of elements $(f_w)_{w\in W}$ converges to zero in $\S$
and we have $\sum_{w\in W}f_w=\sum_{z\in Z}g(e_z)=
g(\sum_{z\in Z}e_z)=1$ in $\S$, as desired.
\end{proof}

 Let us state separately two particular cases of
Proposition~\ref{continuous-image-stays-top-semiperfect}
corresponding to the quotient topologies and the topology
coarsenings.

\begin{cor} \label{complete-quotient-stays-top-semiperfect}
 Let\/ $\R$ be a topologically semiperfect topological ring and\/
$\J\subset\R$ be a closed two-sided ideal such that the quotient
ring\/ $\S=\R/\J$ is complete in its quotient topology.
 Then the topological ring\/ $\S$ is topologically semiperfect. \qed
\end{cor}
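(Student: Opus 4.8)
The plan is simply to invoke Proposition~\ref{continuous-image-stays-top-semiperfect} for the canonical surjection $g\:\R\rarrow\S$, where $\S=\R/\J$ carries the quotient topology. Since in this paper all topological rings are tacitly assumed complete, separated, and right linear, the only thing one has to check before the proposition can be applied is that $\S$, so topologized, is a topological ring in this sense.

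Completeness of $\S$ is precisely the hypothesis. Separatedness holds because $\J$ is closed in $\R$: this is the standard fact that the quotient of a (linearly) topological abelian group by a closed subgroup is separated (see~\cite[Section~11]{Pextop}). Right linearity is the one point that needs a brief argument: I would check that the images $g(\I)$ of the open right ideals $\I\subset\R$ form a base of neighborhoods of zero in~$\S$. Each $g(\I)$ is a right ideal, since $g$ is a surjective ring homomorphism, and it is open in the quotient topology, since its full preimage $\I+\J$ contains the open set~$\I$; conversely, any neighborhood of zero $U\subset\S$ has open preimage in $\R$, hence contains some open right ideal $\I\subset\R$, and then $U\supseteq g(\I)$.

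Once this is in place, note that $g$ is continuous by the very definition of the quotient topology and surjective by construction, so all the hypotheses of Proposition~\ref{continuous-image-stays-top-semiperfect} are satisfied, and we conclude that $\S$ is topologically semiperfect. There is no genuine obstacle here; the routine verification that the quotient topology is right linear is the only item of substance, and everything else is immediate from the proposition. (Two-sidedness of $\J$ is used only to guarantee that $\S$ is a ring at all.)
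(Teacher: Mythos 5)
Your proposal is correct and follows exactly the paper's route: the paper states this corollary with no separate proof precisely because it is an immediate particular case of Proposition~\ref{continuous-image-stays-top-semiperfect} applied to the canonical surjection $\R\rarrow\R/\J$, with the quotient topology. The extra verification you supply (separatedness from closedness of $\J$, right linearity of the quotient topology via the images of open right ideals) is the routine content the paper leaves implicit, and it is carried out correctly.
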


\begin{cor} \label{complete-coarsening-stays-top-semiperfect}
 Let\/ $\R'$ and\/ $\R''$ be two topological ring structures on one
and the same ring\/ $\R'=R=\R''$ such that the topology of\/ $\R''$
is coarser than that of\/~$\R'$.
 Then the topological ring\/ $\R''$ is topologically semiperfect
whenever the topological ring\/ $\R'$ is.
 In other words, if $g\:\R'\rarrow\R''$ is a bijective continuous
homomorphism of topological rings and the topological ring\/ $\R'$
is topologically semiperfect, then the topological ring\/ $\R''$
is topologically semiperfect as well. \qed
\end{cor}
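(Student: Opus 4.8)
The plan is to obtain this as an immediate consequence of Proposition~\ref{continuous-image-stays-top-semiperfect}. Since the topology of $\R''$ is coarser than that of $\R'$, the identity map on the common underlying ring $R=\R'=\R''$ is a continuous ring homomorphism $g\:\R'\rarrow\R''$; it is a bijection, hence in particular a \emph{surjective} continuous homomorphism of topological rings. Applying Proposition~\ref{continuous-image-stays-top-semiperfect} to this~$g$ yields that $\R''$ is topologically semiperfect whenever $\R'$ is, which is precisely the assertion. The ``in other words'' sentence is the same statement with the map~$g$ named explicitly, so nothing more is required.

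For the sake of concreteness I would also record how the argument looks when unwound through condition~(3) of Theorem~\ref{topologically-semiperfect-rings}. Let $\mathbf e=(e_z)_{z\in Z}$ be a zero-convergent family in $\R'$ of pairwise orthogonal local idempotents with $\sum_{z\in Z}e_z=1$. Being idempotent, pairwise orthogonal, and local are purely ring-theoretic conditions that do not refer to any topology, so they persist for the same family regarded in~$\R''$. Every open subgroup of $\R''$ is open in $\R'$, so a zero-convergent family in $\R'$ stays zero-convergent in $\R''$; likewise the net of finite partial sums of the $e_z$, converging to~$1$ in the finer topology, still converges to~$1$ in the coarser one. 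Hence the same family $\mathbf e$ witnesses condition~(3) for~$\R''$.

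I do not expect any genuine obstacle: the corollary is formal, which is why no proof is written out for it. The one point to keep in mind is the standing convention of the paper that $\R''$, being a topological ring, is assumed complete, separated, and right linear in its own (coarser) topology --- coarsening a topology can in general destroy separatedness or completeness, so this is part of the hypothesis rather than something that needs to be checked.
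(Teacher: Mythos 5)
Your proof is correct and is exactly the paper's intended argument: the corollary is stated (with a \qed) as a particular case of Proposition~\ref{continuous-image-stays-top-semiperfect}, applied to the identity map regarded as a surjective continuous homomorphism $\R'\rarrow\R''$. The extra unwinding via condition~(3) of Theorem~\ref{topologically-semiperfect-rings} and the remark about completeness/separatedness being part of the standing hypotheses are both accurate.
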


\begin{ex}
 The situation described in
Corollary~\ref{complete-coarsening-stays-top-semiperfect} happens
quite often.
 For example, let $A$ be a ring and $M$ be an infinitely generated
$A$\+module with a local endomorphism ring $R=\End_A(M)^\rop$.
 Denote by\/ $\R'$ the ring $R$ endowed with the discrete topology,
and let $\R''$ be the notation for the ring $R$ endowed with
the finite topology of the endomorphism ring.
 Then the topological ring $\R'$ is topologically semiperfect
(because any local ring is semiperfect as a discrete ring,
in the sense of Section~\ref{discrete-secn}), and the topological
ring $\R''$ is topologically semiperfect (because any complete,
separated right linear topology on a local ring makes it
topologically semiperfect, in the sense of
Section~\ref{topologically-semiperfect-secn}).

 To give a more specific example, let $R$ be a complete Noetherian
commutative local ring.
 Then the ring $R$ is semiperfect in its discrete topology, and it is
also topologically semiperfect in the adic topology of its maximal
ideal~$\mathfrak m$.
 In fact, the ring $R$ with the $\mathfrak m$\+adic topology is even
\emph{topologically perfect}, in the sense of~\cite{PS3} and
Section~\ref{top-ss-and-perfect-subsecn}, as the ideal~$\mathfrak m$
is topologically T\+nilpotent in its adic topology.
\end{ex}

\begin{rem}
 The converse assertion to
Corollary~\ref{complete-coarsening-stays-top-semiperfect} is not true:
the passage to a finer topology does \emph{not} preserve
the topological semiperfectness.
 For example, let $R$ be the product of an infinite family of fields.
 Then, endowed with the product topology (of the discrete topologies
on the fields), $R$ is topologically semiperfect, and in fact, even
topologically semisimple.
 But $R$ is not semiperfect as a discrete ring, since its Jacobson
radical $H(R)$ vanishes while the ring $R$ is not (classically)
semisimple.
 (Cf.~\cite[Proposition~2.15]{Gre}.)
\end{rem}

\begin{rem}
 It would be interesting to generalize
Proposition~\ref{continuous-image-stays-top-semiperfect} to
continuous, but not necessarily surjective ring homomorphisms~$g$
with a dense image.
 In particular, this might include the passage from a topological
ring to the completion of its quotient ring by a closed two-sided
ideal (cf.\ Section~\ref{top-ss-and-perfect-subsecn}) or to its
 completion with respect to a separated coarsening of its topology.
 However, a straightforward approach to such a generalization runs
into an obstacle in the case of a local ring $\R$ already.

 Let $\R$ be a topological ring that is local as an abstract ring,
and let $H=\HH\subset\R$ be its topological Jacobson radical, which
coincides with the abstract Jacobson radical in this case.
 Let $g\:\R\rarrow\S$ be a continuous homomorphism of topological
rings with a dense image.
 Then, assuming that $\S\ne0$, the right action of $\R$ on $\R/\HH$
extends uniquely to a discrete $\S$\+module structure, making $\R/\HH$
the unique discrete simple right $\S$\+module.
 Hence the topological Jacobson radical $\HH(\S)\subset\S$ coincides
with the topological closure of $g(\HH)$ in $\S$, and the topological
quotient ring $\S/\HH(\S)\simeq\R/\HH$ is a discrete division ring.
 But how can one prove that the \emph{abstract} Jacobson radical
of $\S$ coincides with the topological Jacobson radical?

 Generally, let $\S$ be a topological ring whose topological Jacobson
radical $\HH(\S)$ is an open ideal in $\S$ and the quotient ring
$\S/\HH(\S)$ is a division ring.
 Does it follow that $\S$ is a local ring, or in other words,
that $H(\S)=\HH(\S)$\,?
\end{rem}

\Section{Projective Covers of Finitely Generated Discrete Modules} \label{proj-cover-discrete}

 Let $R$ be an associative ring.
 A nonzero idempotent element $e\in R$ is said to be \emph{primitive}
if it cannot be presented as the sum of two nonzero orthogonal
idempotents.
 The definition of a \emph{local} idempotent was given in
Section~\ref{surjective-homomorphisms-secn}.
 Since a local ring contains no idempotents other than $0$ and~$1$,
it follows that any local idempotent is primitive.

 We start with a discussion of idempotents in a topologically
semisimple topological ring (in the sense of~\cite[Section~6]{PS3};
see Subsection~\ref{top-ss-and-perfect-subsecn}).

\begin{lem} \label{idempotents-in-topologically-semisimple-rings}
 Let\/ $\S$ be a topologically semisimple topological ring and
$g\in\S$ be an idempotent element.
 Then the following conditions are equivalent:
\begin{enumerate}
\item $g$ is a primitive idempotent;
\item $g$ is a local idempotent;
\item the right\/ $\S$\+module $g\S$ is (discrete and) simple;
\item the left\/ $\S$\+contramodule $\S g$ is simple.
\end{enumerate}
\end{lem}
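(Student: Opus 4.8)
\textbf{Proof strategy for Lemma~\ref{idempotents-in-topologically-semisimple-rings}.}
The plan is to exploit the explicit structural description of topologically semisimple rings as products of rings of row-finite matrices over skew-fields, together with the duality between discrete right modules and left contramodules available for such rings. First I would dispose of the easy implications that hold over any ring. The implication (2)$\Rightarrow$(1) was already noted in the text (a local idempotent is primitive, since a local ring has no nontrivial idempotents). For (3)$\Leftrightarrow$(4), I would invoke the splitness of both abelian categories $\Discr\S$ and $\S\Contra$ for a topologically semisimple $\S$ (see~\cite[Section~6]{PS3} and Subsection~\ref{top-ss-and-perfect-subsecn}); under the induced (anti)equivalence between the semisimple pieces — or more concretely, using Theorem~\ref{pontryagin-thm}, which identifies $\Hom^\R(-,\R)$ on finitely generated projectives with a Pontryagin-type duality — the finitely generated projective $\S g$ corresponds to $g\S$, and simplicity is preserved because the duality is exact and reflects the lattice of subobjects. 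So one of (3), (4) holds if and only if the other does.

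The substantive step is (1)$\Rightarrow$(2) (equivalently (1)$\Rightarrow$(3)); the reverse (3)$\Rightarrow$(1) is formal, since a simple module is certainly indecomposable, hence $g$ cannot split off a nonzero proper summand, i.e.\ $g$ is primitive. For (1)$\Rightarrow$(3): write $\S\simeq\prod_{x\in X}\R_x$ where each $\R_x$ is the topological ring of $Y_x$-sized row-finite matrices over a skew-field $D_x$ (by~\cite[Theorem~3.10]{IMR}, \cite[Theorem~6.2]{PS3}). An idempotent $g=(g_x)_{x\in X}$ is primitive in $\S$ only if $g_x=0$ for all but one index $x_0$ and $g_{x_0}$ is a primitive idempotent in $\R_{x_0}$; indeed if two indices had nonzero components, or if some $g_{x_0}$ decomposed, we could split $g$ into two nonzero orthogonal idempotents (and the needed idempotents are available inside the non-unital but idempotent-rich matrix ring). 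So it suffices to treat a single factor $\R=\R_x$, the ring of $Y$-sized row-finite matrices over a skew-field $D$. Here $\R$ acts faithfully on the discrete simple right module $V=D^{(Y)}$ (row-finite matrices act on the right on finitely supported row vectors, or one takes the appropriate variant consistent with the paper's conventions), and every finitely generated projective right $\R$-module is a finite direct sum of copies of $V$. A primitive idempotent $g$ then has $g\R$ an indecomposable finitely generated projective, hence $g\R\simeq V$ is simple; and $g\R g\simeq\End_\R(g\R)^{\rop}\simeq D$ is a skew-field, so $g$ is local. This simultaneously yields (1)$\Rightarrow$(3) and (1)$\Rightarrow$(2).

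The main obstacle I anticipate is bookkeeping rather than conceptual: making the reduction ``primitive idempotent in a product of matrix rings lives in a single factor and is primitive there'' fully rigorous requires a small argument about idempotents in infinite products and in non-unital row-finite matrix rings, and one must be careful that the relevant orthogonal idempotents one uses to split a non-primitive $g$ genuinely lie in $\S$ (they do, being supported in finitely many rows/columns, hence row-finite). A secondary point of care is the precise handedness in the matrix description — whether one uses row-finite or column-finite matrices and left vs.\ right action — so that $g\S$ simple really does correspond to the simple module $V$ under the paper's conventions; once the model is fixed, the identifications $g\R\simeq V$ and $g\R g\simeq D$ are routine. Having established the chain (2)$\Rightarrow$(1)$\Rightarrow$(3)$\Rightarrow$(1) and (2)$\Rightarrow$(3) via the same matrix computation, together with (3)$\Leftrightarrow$(4) from the contramodule duality, closes the cycle of equivalences.
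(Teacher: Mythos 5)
Your overall architecture (the cycle of implications, the reduction to the explicit description of topologically semisimple rings as products of row-finite matrix rings over skew-fields) is sound and close in spirit to the paper's proof, which realizes $\S$ as $\End_A(M)^\rop$ for a semisimple module $M=\bigoplus_{x}D_x^{(Y_x)}$ over $A=\prod_x D_x$. However, the pivotal step of your (1)$\Rightarrow$(3)/(2) rests on a false claim: it is \emph{not} true that every finitely generated projective right module over the ring $\R$ of $Y$-sized row-finite matrices over a skew-field $D$ is a finite direct sum of copies of $V=D^{(Y)}$ when $Y$ is infinite. The free module $\R$ itself is a counterexample: as a right $\R$-module, $\R\cong\Hom_D(D^{(Y)},D^{(Y)})\cong V^Y$ is a \emph{product}, not a finite direct sum, of copies of $V$; it is not even a discrete module (the annihilator of $1$ is the zero ideal, which is not open), whereas $V^{(n)}$ is discrete of finite length. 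The fact you are importing holds for finite matrix rings, i.e.\ simple Artinian rings, but breaks for infinite $Y$, where $\R$ is not semisimple as an abstract ring. So ``primitive $\Rightarrow$ $g\R$ indecomposable f.g.\ projective $\Rightarrow$ $g\R\cong V$'' does not follow as written. The repair --- and this is what the paper does --- is to translate the idempotent $g$ directly into the direct summand $Mg$ of the semisimple module $M$: a direct summand of a semisimple module is again semisimple, hence indecomposable if and only if simple; thus $g$ primitive forces $Mg$ to be simple, whence $g\S\cong\Hom_A(Mg,M)\cong V$ is discrete and simple, and $g\S g\cong\End_A(Mg)$ is a division ring, so $g$ is local. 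This observation is the missing idea.

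A secondary weak point is your justification of (3)$\Leftrightarrow$(4). Theorem~\ref{pontryagin-thm} is a duality between \emph{additive} categories of projectives, namely $(\R\Contra_\proj)^\sop$ and $\Prod(\R)$; it is not an exact functor between abelian categories, and it does not a priori ``reflect the lattice of subobjects,'' since subobjects of $\S g$ in $\S\Contra$ are not in general visible as direct summands. For a topologically semisimple $\S$ this can be patched, but the clean argument (the one in the paper) goes via endomorphism rings: both $\Discr\S$ and $\S\Contra$ are split, so in each of them simple $\Leftrightarrow$ indecomposable $\Leftrightarrow$ local endomorphism ring, and one has $\End_\S(g\S)\cong g\S g$ while $\Hom^\S(\S g,\S g)\cong(g\S g)^\rop$; a ring is local if and only if its opposite is. With these two replacements your chain of implications closes correctly.
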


\begin{proof}
 (1)\,$\Longleftrightarrow$\,(2)
 By~\cite[Theorem~6.2(3)]{PS3}, there exists a ring $A$ and
a semisimple left $A$\+module $M$ such that $\S$ is topologically
isomorphic to the ring $\End_A(M)^\rop$ endowed with the finite
topology.
 Now idempotent elements of $\S$ correspond to decompositions of
the $A$\+module $M$ into direct sums of two summands.
 It remains to observe that any direct summand of $M$ is semisimple,
and a semisimple module is indecomposable if and only if its
endomorphism ring is local.

 (1)\,$\Longrightarrow$\,(3)
 Following~\cite[proof of Theorem~6.2]{PS3}, in the context of
the previous paragraph one can assume that $A=\prod_{x\in X}D_x$
is a product of division rings and $M=\bigoplus_{x\in X}D_x^{(Y_x)}$
is a direct sum of vector spaces over these division rings, which is a semisimple $A$-module.
If $M\larrow M\,:\!g$ is a primitive idempotent, then $Mg$ is a simple $A$\+module and elements of the right ideal $g\S$ can be identified with $A$\+module homomorphisms $M\larrow Mg$. As $Mg$ is simple, the finite topology on $\Hom_A(Mg,M)$ is discrete, and so is the right ideal $g\S$. It is also not difficult to see that $g\S$ is a simple right $\S$\+module; see the description of such modules in~\cite[Remark~6.4]{PS3}.

 (3)\,$\Longrightarrow$\,(2)
 The endomorphism ring $g\S g=\End_\S(g\S)$ of a simple
right $\S$\+module $g\S$ is a division ring, hence a local ring.

 (2)\,$\Longleftrightarrow$\,(4)
 By~\cite[Theorem~6.2(1)]{PS3}, the category of left
$\S$\+contramodules is Ab5 and semisimple; hence an object
$\C\in\S\Contra$ is simple if and only if its endomorphism ring
$\Hom^\S(\C,\C)$ is local.
 (In fact, in any split abelian category, an object is simple if and
only if it is indecomposable; the endomorphism ring of a decomposable
object cannot be local.)
 It remains to recall that for a finitely generated (in fact, cyclic)
projective left $\S$\+contramodule $\S g$ one has
$\Hom^\S(\S g,\S g)=\Hom_\S(\S g,\S g)\simeq(g\S g)^\rop$.
\end{proof}

\begin{lem} \label{simples-over-top-semisimple-via-idempotents}
 Let\/ $\S$ be a topologically semisimple topological ring and\/
$\mathbf g=(g_z)_{z\in Z}\in\S[[Z]]$ be a zero-convergent family
of pairwise orthogonal primitive/local idempotents such that\/
$\sum_{z\in Z}g_z=1$ in\/~$\S$.  Then
\begin{enumerate}
\renewcommand{\theenumi}{\alph{enumi}}
\item every simple discrete right\/ $\S$\+module has the form
$g_z\S$ for some $z\in Z$;
\item every simple left\/ $\S$\+contramodule has the form
$\S g_z$ for some $z\in Z$.
\end{enumerate}
\end{lem}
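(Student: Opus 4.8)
The plan is to reduce both statements to the structural facts about $\S$ that are already available. Because $\mathbf g=(g_z)_{z\in Z}\in\S[[Z]]$ is a zero-convergent family of pairwise orthogonal idempotents with $\sum_{z\in Z}g_z=1$ and each $g_z\S g_z$ local, the topological ring $\S$ satisfies condition~(3) of Theorem~\ref{topologically-semiperfect-rings}; hence, exactly as in the proof of that theorem, the left ideals $\S g_z\subset\S$ form a coproduct cocone realizing $\S$ as $\coprod_{z\in Z}^{\S\Contra}\S g_z$ in $\S\Contra$, and dually $\S\simeq\prod_{z\in Z}g_z\S$ in $\Modcs\S$. Moreover, each $g_z$ is a local, hence primitive, idempotent, so by Lemma~\ref{idempotents-in-topologically-semisimple-rings} the right $\S$\+module $g_z\S$ is discrete and simple, and the left $\S$\+contramodule $\S g_z$ is simple. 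Thus in both parts it suffices to show that an \emph{arbitrary} simple object is isomorphic to one of these distinguished ones.

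For part~(a), let $N$ be a simple discrete right $\S$\+module and fix a nonzero $n\in N$. Its annihilator is an open right ideal, so the map $\psi\:\S\rarrow N$, $r\mapsto nr$, is a continuous homomorphism of right $\S$\+modules, and it is surjective because $N$ is simple and $n\S\ne0$. Applying $\psi$ to the convergent partial unity $1=\sum_{z\in Z}g_z$ and invoking discreteness of $N$, one finds that $ng_z=0$ for all but finitely many $z$ and that $n=\sum_{z\in F}ng_z$ for a suitable finite $F\subset Z$; since $n\ne0$ there is $z_0\in F$ with $ng_{z_0}\ne0$. The restriction of $\psi$ to the summand $g_{z_0}\S\subset\S$ is then a nonzero homomorphism between the simple right $\S$\+modules $g_{z_0}\S$ and $N$, hence an isomorphism, proving $N\simeq g_{z_0}\S$.

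For part~(b), let $\C$ be a simple left $\S$\+contramodule. Regarding $\S$ as the free left $\S$\+contramodule with one generator, $\Hom^\S(\S,\C)$ is in bijection with the underlying set of $\C$, which contains a nonzero element as $\C\ne0$; so there is a nonzero morphism $\pi\:\S\rarrow\C$, which is an epimorphism because $\C$ is simple. Let $\iota_z\:\S g_z\rarrow\S$ be the coproduct structure morphisms from the first paragraph. By the universal property of the coproduct in $\S\Contra$, the morphism $\pi$ vanishes as soon as all composites $\pi\circ\iota_z\:\S g_z\rarrow\C$ vanish; since $\pi\ne0$, some $\pi\circ\iota_z$ is nonzero, and being a nonzero morphism between the simple contramodules $\S g_z$ and $\C$ it is an isomorphism, so $\C\simeq\S g_z$.

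The arguments are short, and I do not anticipate a genuinely hard step. Two points call for a little care. In part~(a), the interaction of the topology on $\S$ with the discreteness of $N$ is what turns the infinite decomposition $1=\sum_z g_z$ into an effectively finite one and thereby singles out the index $z_0$. In part~(b), one should keep in mind that although the forgetful functor $\S\Contra\rarrow\S\Modl$ does not preserve coproducts, only the purely categorical universal property of the coproduct in $\S\Contra$ is invoked here, so no contramodule-specific subtlety actually intervenes.
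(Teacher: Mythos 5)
Your proof is correct, but it takes a genuinely different route from the paper's. The paper proves both parts by invoking the explicit realization of a topologically semisimple ring as $\S=\End_A(M)^\rop$ with $A$ a product of division rings and $M$ a semisimple module, observing that the complete orthogonal family $(g_z)$ corresponds to a decomposition of $M$ into simple summands, and then citing the external description of simple discrete right $\S$\+modules and simple left $\S$\+contramodules from \cite[Remark~6.4]{PS3}. You instead argue internally in the ring: for~(a) you use only that the annihilator of a nonzero element $n$ of a simple discrete module is open, so that zero-convergence of $(g_z)$ and continuity of $r\mapsto nr$ turn $1=\sum_z g_z$ into a finite identity $n=\sum_{z\in F}ng_z$, after which Schur's lemma applied to the restriction of $r \mapsto nr$ to the simple right ideal $g_{z_0}\S$ finishes the job; for~(b) you use the coproduct decomposition $\S\simeq\coprod_{z\in Z}\S g_z$ in $\S\Contra$ supplied by the proof of Theorem~\ref{topologically-semiperfect-rings} (3)$\Rightarrow$(1) together with the universal property of the coproduct and Schur's lemma in the abelian category $\S\Contra$. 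The simplicity of $g_z\S$ and $\S g_z$, which both approaches need, you correctly extract from Lemma~\ref{idempotents-in-topologically-semisimple-rings}. What your version buys is self-containedness --- it avoids the classification theorem and the external reference entirely, and in fact only uses topological semisimplicity through Lemma~\ref{idempotents-in-topologically-semisimple-rings}; what the paper's version buys is brevity, since the structure theory is already set up and the same realization is reused from the proof of the preceding lemma. Both arguments are complete and the delicate points you flag (discreteness making the sum effectively finite in~(a); using only the categorical universal property of the coproduct in $\S\Contra$ in~(b)) are exactly the right ones to watch.
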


\begin{proof}
 As in the previous proof, we have $\S=\End_A(M)^\rop$, where
$A=\prod_{x\in X}D_x$ and $M=\bigoplus_{x\in X}D_x^{(Y_x)}$.
 The choice of a zero-convergent family of orthogonal primitive
idempotents $(g_z)_{z\in Z}$ with $\sum_{z\in Z}g_z=1$ is equivalent
to the choice of a decomposition of $M$ into a direct sum of simple
modules (i.~e., one-dimensional vector spaces).
 Both the assertions of the lemma can be now easily obtained
from~\cite[Remark~6.4]{PS3}.
\end{proof}

\begin{lem} \label{simple-contra-modules}
 Let\/ $\R$ be a topological ring and\/ $\C$ be a left\/
$\R$\+contramodule. 
 Then\/ $\C$ is simple (as an object of\/ $\R\Contra$) if and only if
the underlying left\/ $\R$\+module of\/ $\C$ is simple (as an object
of\/ $\R\Modl$). 
\end{lem}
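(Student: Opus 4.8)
The plan is to reduce everything to the identity $\R[[X]]=\R[X]$ for finite $X$, emphasized in Subsection~\ref{finiteness-subsecn}. The key preliminary observation I would record is that, for any element $c\in\C$, the subcontramodule of $\C$ generated by $c$ coincides with the cyclic $\R$\+submodule $\R c\subseteq\C$. Indeed, by the universal property of free contramodules recalled in Subsection~\ref{contramodules-subsecn}, the map of sets $\{c\}\rarrow\C$ sending $c$ to itself extends uniquely to a contramodule morphism $\R=\R[[\{c\}]]\rarrow\C$; passing through the forgetful functor $\R\Contra\rarrow\R\Modl$, its underlying $\R$\+module homomorphism sends $1\mapsto c$, hence $r\mapsto rc$, so its image is exactly $\R c$. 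Since $\R\Contra$ is abelian, this image is a subcontramodule; conversely any subcontramodule containing $c$ is in particular an $\R$\+submodule containing $c$, hence contains $\R c$. (The same argument with $\R[[S]]=\R[S]$ shows more generally that the subcontramodule generated by a finite subset is the $\R$\+submodule it generates.)

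Granting this, the ``only if'' direction is immediate: if $\C$ is simple in $\R\Contra$, then $\C\ne0$, and for every nonzero $c\in\C$ the subcontramodule $\R c$ is nonzero, since it contains $c=1\cdot c$ by the unitality axiom of the contraaction; hence $\R c=\C$. Thus $\R c=\C$ for every nonzero $c\in\C$, which is precisely the statement that the underlying left $\R$\+module of $\C$ is simple. The ``if'' direction is even easier: if the underlying module of $\C$ is simple, then $\C\ne0$ and every subcontramodule of $\C$, being an $\R$\+submodule, is either $0$ or $\C$, so $\C$ is simple in $\R\Contra$.

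I do not anticipate any genuine obstacle here. The only point that warrants a line of justification is the identification of the subcontramodule generated by a single element with the ordinary cyclic submodule, and this rests entirely on $\R[[\{c\}]]=\R$ together with the universal property of free contramodules; once that is in place, the rest is just the standard characterization of a simple module as a nonzero module $M$ with $\R m=M$ for every nonzero $m\in M$.
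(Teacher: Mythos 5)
Your proof is correct and takes essentially the same route as the paper: the entire content is the observation that the cyclic submodule $\R c\subset\C$ is a subcontramodule (equivalently, the subcontramodule generated by~$c$), which the paper simply cites as \cite[Lemma~3.4]{BPS} while you prove it directly from $\R[[\{c\}]]=\R$ and the exactness of the forgetful functor. No gaps.
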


\begin{proof}
 One observes that a module is simple if and only it contains
no proper nonzero \emph{cyclic} submodules.
 Then it remains to use~\cite[Lemma~3.4]{BPS}, which tells that
any cyclic submodule of a contramodule is a subcontramodule. 
\end{proof}

Next we adapt general facts about projective covers to the situation
of discrete modules over a topological ring.
A minor variation of the argument in~\cite[Theorem~17.9]{AF} shows us that the projective covers of discrete simples, when they exist, generate all discrete modules in $\Modr\R$.

\begin{lem} \label{projective-covers-of-discrete-simples-generate}
 Let\/ $\R$ be a topological ring and $X$ be a set indexing
all isomorphism classes of simple discrete right $\R$\+modules.
Assume that for each $x\in X$, a projective cover
$P_x\rarrow S_x$ of the corresponding discrete simple module $S_x$
exists in the category\/ $\Modr\R$. Then each $M\in\Discr\R$ 
admits a surjective homomorphism of right $\R$\+modules of the form
$\bigoplus_{i\in I}P_{x_i}\rarrow M$ with $x_i\in X$ for each $i\in I$.
\end{lem}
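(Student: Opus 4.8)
The plan is to imitate the classical argument of~\cite[Theorem~17.9]{AF} that a semiperfect ring has the property that projective covers of simples generate all modules, adapting it to discrete modules over a (not necessarily semiperfect) topological ring~$\R$. First I would recall that every discrete right $\R$\+module $M$ is generated, as an $\R$\+module, by its elements, each of which generates a cyclic discrete submodule. Fix a set $\{m_i\}_{i\in I}$ of generators of $M$, so that $M=\sum_{i\in I}m_i\R$, and consider for each $i$ the cyclic discrete module $m_i\R\cong\R/\I_i$, where $\I_i=\operatorname{Ann}(m_i)$ is an open right ideal. It suffices to produce, for each $i$, a projective module $Q_i$ that is a direct sum of copies of the $P_x$ together with a surjection $Q_i\rarrow m_i\R$; the total map $\bigoplus_{i\in I}Q_i\rarrow\bigoplus_{i\in I}m_i\R\rarrow M$ is then the desired surjection onto~$M$.

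So I would reduce to the case of a single cyclic discrete module $N=\R/\I$ with $\I\subset\R$ open. Here the key point is a transfinite (or ordinary, if $\R$ has a countable base) inductive construction. One builds an ascending chain of submodules $N_0\subseteq N_1\subseteq\cdots$ of $N$, together with at each stage a chosen discrete simple quotient of $N/N_\alpha$ (using that a nonzero discrete module, being generated by its elements with open annihilators, has a simple discrete quotient — this follows because $\R/\I$ has a maximal submodule, as $\I$ is contained in a maximal right ideal, which is automatically open), and lifts the corresponding projective cover $P_x\rarrow S_x$ against the surjection $N/N_\alpha\rarrow S_x$. Summing all these maps $P_{x_\alpha}\rarrow N$ over the ordinals~$\alpha$ used, one obtains a map $\bigoplus_\alpha P_{x_\alpha}\rarrow N$ whose image strictly increases with~$\alpha$; by a cardinality/termination argument the process exhausts~$N$, giving surjectivity.

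The main obstacle I anticipate is making the termination argument rigorous in the absence of chain conditions on $N$. In the classical semiperfect case one uses that $PH\subset P$ is superfluous and the module is finitely generated, so a single projective cover already surjects; here $N=\R/\I$ need not be a finite direct sum of modules with local endomorphism rings, so we genuinely need the transfinite iteration and must argue that it stabilizes only at the whole module. The cleanest way is probably to well-order a generating set of $N$ and, at step~$\alpha$, if the submodule generated so far is not all of $N$, pick the smallest not-yet-covered generator, map it into a simple quotient, and lift the projective cover; since the generating set is a set, the process must terminate with $\sum$ of images equal to $N$. An alternative, perhaps more transparent, is to observe that the union $\bigcup_\alpha N_\alpha$ is a submodule of $N$, and if it were proper it would again have a simple discrete quotient, contradicting maximality of the construction — this Zorn-type phrasing avoids explicit ordinals altogether. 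Either way, the delicate point is ensuring the $\R$\+module $\bigoplus_{i\in I}P_{x_i}$ that results is literally a direct sum of the $P_x$'s (up to isomorphism) with the $x_i$ drawn from $X$, which is immediate once each summand is chosen to be one of the given $P_x$.

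Finally, I would note that no topological semiperfectness of $\R$ is used anywhere: the statement is purely a consequence of the existence hypothesis on projective covers of discrete simples together with the fact that every nonzero discrete $\R$\+module has a discrete simple quotient. This is exactly parallel to the discrete case, where the analogous statement holds for any ring admitting projective covers of all simple modules, independently of semiperfectness.
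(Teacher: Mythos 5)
Your proof is correct, and it rests on the same two pillars as the paper's: the existence of maximal submodules in cyclic (more generally, finitely generated) discrete modules, whose simple quotients are automatically discrete because a maximal right ideal containing an open right ideal is itself open, and the lifting of the given projective covers $P_x\rarrow S_x$ against surjections onto such simple subquotients. The packaging differs, though. You first reduce to cyclic discrete modules $\R/\I$ and then run a transfinite recursion producing a strictly increasing chain of submodules; the paper avoids both steps with a single maximality argument on all of $M$ at once. It takes $N\subseteq M$ to be the largest submodule generated by $\{P_x\mid x\in X\}$ (the trace), and if $N\neq M$ it picks a nonzero finitely generated submodule $F/N\subseteq M/N$, a maximal submodule $G/N\subseteq F/N$, and lifts the projective cover $P_x\rarrow F/G$ along $F\rarrow F/G$ to a map $P_x\rarrow M$ whose image is not contained in $N$ --- a contradiction. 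This is essentially your ``alternative Zorn-type phrasing,'' executed directly on $M$ rather than on each cyclic piece, and it buys a shorter proof with no ordinal bookkeeping; your cyclic reduction buys a completely transparent source of simple discrete quotients but otherwise nothing extra. One caveat: your closing assertion that ``every nonzero discrete $\R$\+module has a discrete simple quotient'' is false in general (take $\R=\boZ$ with the discrete topology and the module $\mathbb{Q}$, which has no maximal submodules); what is true, and what both proofs actually use, is that every nonzero \emph{finitely generated} (e.g.\ cyclic) discrete module has a simple discrete quotient. Since your argument only ever applies the claim to quotients of $\R/\I$, this misstatement does not affect its validity.
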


\begin{proof}
Let $M\in\Discr\R$ and $N\subset M$ be the unique largest $\R$\+submodule of $M$ generated by the set $\{P_x\mid x\in X\}$. We must prove that $M=N$. If not, there exists a nonzero finitely generated submodule $F/N\subset M/N$ which in turn has a maximal submodule $G/N\subset F/N$. So the subquotient $F/G$ of $M$ is simple and discrete and, hence, admits by assumption a projective cover $p\:P_x\rarrow F/G$ in $\Modr\R$ for some $x\in X$. As $P_x$ is projective in $\Modr\R$, the map $p$ lifts to a homomorphism of $\R$\+modules $f\:P_x\rarrow M$ whose image is not contained in $N$ by construction. This yields the desired contradiction and concludes the proof.
\end{proof}

Now we can adapt \cite[Theorem 27.6\,(c)$\Rightarrow$(d)]{AF}, which classically says that over a (discrete) ring, projective covers of all finitely generated modules exist provided that projective covers of all simple modules exist.

\begin{lem} \label{discrete-fg-projective-covers-provisional}
Let\/ $\R$ be a topological ring such that each
simple discrete right $\R$\+module admits a projective cover
in the category\/ $\Modr\R$.
Then in fact any finitely generated discrete right\/ $\R$\+module
has a projective cover in\/ $\Modr\R$.
\end{lem}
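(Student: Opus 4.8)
The plan is to transplant the classical argument for \cite[Theorem~27.6\,(c)$\Rightarrow$(d)]{AF}, using throughout that $\Modr\R$ is nothing but the category of (abstract) modules over the ring~$\R$, so that the standard facts about superfluous submodules, radicals and projective covers all apply. Two such facts I would isolate first. \emph{(a)}~If $N\subseteq M$ is a superfluous submodule and $M/N$ has a projective cover in $\Modr\R$, then so does $M$: lift a projective cover $q\:Q\rarrow M/N$ along the surjection $M\rarrow M/N$ to a map $\sigma\:Q\rarrow M$; since $N$ is superfluous, $\sigma$ is surjective, and since $\ker\sigma\subseteq\ker q$ is superfluous in $Q$, the map $\sigma$ is a projective cover. \emph{(b)}~A finite direct sum of modules each admitting a projective cover in $\Modr\R$ again admits one, as a finite sum of superfluous submodules is superfluous and a finite sum of projectives is projective. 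Combining \emph{(b)} with the hypothesis of the lemma, every finitely generated semisimple discrete right $\R$\+module has a projective cover in $\Modr\R$, being a finite direct sum of simple discrete modules; here I use that submodules and quotients of discrete modules are again discrete, as $\Discr\R$ is a hereditary pretorsion class in $\Modr\R$.

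Next I would take an arbitrary finitely generated $M\in\Discr\R$ and invoke Lemma~\ref{projective-covers-of-discrete-simples-generate} to get a surjection $\bigoplus_{i\in I}P_{x_i}\rarrow M$ in $\Modr\R$, where each $P_{x_i}$ is the projective cover of a simple discrete module $S_{x_i}$. Because $M$ is finitely generated and every element of a direct sum has finite support, finitely many summands already suffice for surjectivity; discarding the rest yields a surjection $\varphi\:P\rarrow M$ with $P=\bigoplus_{i=1}^{n}P_{x_i}$. For each $i$ the kernel of $P_{x_i}\rarrow S_{x_i}$, being superfluous, is contained in $\rad(P_{x_i})$, while being maximal it contains $\rad(P_{x_i})$; hence it equals $\rad(P_{x_i})$, the quotient $P_{x_i}/\rad(P_{x_i})\simeq S_{x_i}$ is simple, and $\rad(P_{x_i})$ is superfluous in $P_{x_i}$. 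Passing to the direct sum, $\rad(P)=\bigoplus_{i=1}^{n}\rad(P_{x_i})$ is superfluous in $P$ and $P/\rad(P)$ is semisimple.

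Finally I would assemble the pieces. The image $\varphi(\rad(P))\subseteq M$ is superfluous, being the epimorphic image of a superfluous submodule, and $M/\varphi(\rad(P))$ is a quotient of the semisimple module $P/\rad(P)$, hence itself semisimple, as well as finitely generated and discrete. By the consequence of \emph{(b)} noted in the first paragraph, $M/\varphi(\rad(P))$ has a projective cover in $\Modr\R$; then by \emph{(a)}, applied with $N=\varphi(\rad(P))$, so does $M$. This would finish the proof.

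I do not expect a genuine obstacle here — the argument is a routine adaptation — but the points deserving a little care are the verifications that discreteness and the relevant module-theoretic constructions ($\rad$, superfluousness, epimorphic images thereof) behave as in the classical setting, and the reduction of the a priori infinite coproduct furnished by Lemma~\ref{projective-covers-of-discrete-simples-generate} to a finite one by means of the finite generation of $M$.
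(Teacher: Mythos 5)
Your proposal is correct and follows essentially the same route as the paper: reduce to a finite surjection $P=\bigoplus_{i=1}^n P_{x_i}\rarrow M$ via Lemma~\ref{projective-covers-of-discrete-simples-generate}, observe that the relevant quotient of $M$ is semisimple, take a projective cover of that quotient, and lift it along a superfluous submodule. The only cosmetic difference is that you phrase the argument in terms of $\rad(P)$ and its image $\varphi(\rad(P))$ where the paper uses $PH$ and $MH$ for $H=H(\R)$ (these coincide, since $\rad(P)=PH$ for a finitely generated projective $P$ and $\varphi$ is surjective), and you spell out the standard facts that the paper delegates to \cite[Corollary~15.13 and Lemma~27.5]{AF} and \cite[Remark~1.4.2]{Xu}.
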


\begin{proof}
Suppose $M\in\Discr\R$ is finitely generated. Then there is a surjective homomorphism of right $\R$\+modules $P := P_{x_1}\oplus\cdots\oplus P_{x_n}\rarrow M$ for some $n\ge 0$. Here, $P_{x_i}$ are projective covers of certain simple discrete $\R$\+modules as in the statement of Lemma~\ref{projective-covers-of-discrete-simples-generate}.
Denoting $H=H(\R)$ the Jacobson radical of $\R$ as usual, this homomorphism induces a surjective homomorphism $P/PH\rarrow M/MH$. Since $P/PH$ is semisimple (by
Lemma~\ref{projective-cover-of-simple} and Corollary~\ref{local-idempotent-cor}), so is $M/MH$. 
As a finite direct sum of covers is a cover (see~\cite[Remark~1.4.2]{Xu}), the module $M/MH$ has a projective cover $p\:Q\rarrow M/MH$ in $\Modr\R$.  Finally, $Q$ being projective, the map $p$ lifts to $f\:Q\rarrow M$ which is a projective cover of $M$ in $\Modr\R$ by~\cite[Corollary 15.13 and Lemma 27.5]{AF}.
\end{proof}

If projective covers of discrete simple modules exist over a given topological ring, finitely generated discrete modules over that ring behave very similarly to finitely generated modules over a classical semiperfect ring.
To this end, we recall that the \emph{radical} $\rad(M)$ of a module $M$ is the intersection of all maximal submodules of $M$.

\begin{prop} \label{structure-fg-discrete-prop}
Let\/ $\R$ be a topological ring such that each
simple discrete right $\R$\+module admits a projective cover
in\/ $\Modr\R$. Let further $H=H(\R)$ and $\HH=\HH(\R)$ be the abstract and the topological Jacobson radicals of\/ $\R$, respectively, and let $M$ be a finitely generated discrete right $\R$\+module.
Then $\rad(M)=MH=M\HH$ and $M/\rad(M)$ is semisimple.
\end{prop}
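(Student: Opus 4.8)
The plan is to reduce everything to the semisimplicity of $M/MH$ --- obtained from the hypothesis that discrete simples have projective covers, exactly as in the proof of Lemma~\ref{discrete-fg-projective-covers-provisional} --- and then to carry out the standard radical bookkeeping. First I would dispatch the inclusions that hold for \emph{any} topological ring and any finitely generated discrete $M$: if $N\subset M$ is a maximal submodule, then $M/N$ is a simple right $\R$\+module, and it is discrete since $\Discr\R$ is a pretorsion class closed under quotients. Now $H=H(\R)$ annihilates every simple right $\R$\+module, while $\HH=\HH(\R)$ is the intersection of the annihilators of all \emph{discrete} simple right $\R$\+modules (\cite[Lemma~7.2(ii)]{Pproperf}); as $H\subset\HH$, in either case the ideal annihilates $M/N$, so $MH\subset M\HH\subset N$. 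Intersecting over all maximal submodules $N$ gives $MH\subset M\HH\subset\rad(M)$.

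The heart of the argument is that $M/MH$ is semisimple. By Lemma~\ref{projective-covers-of-discrete-simples-generate} and finite generation of $M$, there is a surjection of right $\R$\+modules $P=P_{x_1}\oplus\dots\oplus P_{x_n}\twoheadrightarrow M$, where each $P_{x_i}$ is a projective cover of a discrete simple module. By Lemma~\ref{projective-cover-of-simple} the endomorphism ring of $P_{x_i}$ is local and $P_{x_i}\simeq e_i\R$ for some idempotent $e_i\in\R$, so Corollary~\ref{local-idempotent-cor} (applied in the underlying abstract ring) shows that $P_{x_i}/P_{x_i}H\simeq e_i\R/e_iH$ is simple. Hence $P/PH\simeq\bigoplus_{i=1}^n P_{x_i}/P_{x_i}H$ is semisimple, and the induced surjection $P/PH\twoheadrightarrow M/MH$ exhibits $M/MH$ as a quotient of a semisimple module, hence as a semisimple module itself.

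It remains to assemble the pieces. Since $MH\subset\rad(M)$, every maximal submodule of $M$ contains $MH$, so the maximal submodules of $M$ correspond bijectively to those of $M/MH$; therefore $\rad(M/MH)=\rad(M)/MH$. A semisimple module has zero radical, so $\rad(M)=MH$, and consequently $M/\rad(M)=M/MH$ is semisimple. Combining this with the inclusions of the first paragraph yields $M\HH\subset\rad(M)=MH\subset M\HH$, hence $M\HH=MH=\rad(M)$, which is the assertion.

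The only step that is not purely formal is the semisimplicity of $M/MH$, and the point to watch there is that the projective covers $P_x$ live in $\Modr\R$, not in $\Discr\R$, so they need not be discrete; one uses only that each $P_x$ is a finitely generated projective $\R$\+module of the form $e_x\R$ with local endomorphism ring, so that Corollary~\ref{local-idempotent-cor} applies verbatim to the abstract ring underlying $\R$. Beyond that I anticipate no obstacle.
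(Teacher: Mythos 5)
Your proposal is correct and follows essentially the same route as the paper: both establish the semisimplicity of $M/MH$ via the surjection from a finite direct sum of projective covers of discrete simples (as in the proof of Lemma~\ref{discrete-fg-projective-covers-provisional}), deduce $\rad(M)\subset MH$ from it, and close the circle of inclusions using $H\subset\HH$ together with the fact that $\HH$ annihilates every discrete simple module (\cite[Lemma~7.2(ii)]{Pproperf}). Your closing remark that the covers $P_x$ need not be discrete, so that Corollary~\ref{local-idempotent-cor} is applied to the abstract ring, is a correct and worthwhile observation.
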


\begin{proof}
That $M/MH$ is semisimple was shown in the proof of Lemma~\ref{discrete-fg-projective-covers-provisional}. It follows that $MH$ is an intersection of certain maximal submodules of $M$, so $\rad(M)\subset MH$. As always $H\subset\HH$, we also have $MH\subset M\HH$. Finally, given any maximal submodule $N\subset M$, the simple factor $M/N$ is discrete and hence annihilated by $\HH$ thanks to~\cite[Lemma~7.2(ii)]{Pproperf}. It follows that $M\HH$ is contained in any maximal submodule of $M$, and so $M\HH\subset\rad(M)$.
\end{proof}

Now we prove in a few steps
(finishing in Proposition~\ref{discrete-fg-projective-covers})
that each topologically semiperfect
topological ring does admit projective covers of discrete simple
(and hence discrete finitely generated) modules.
We do not know whether the converse holds in general,
but we succeeded to prove it for topological rings with
a countable base of neighborhood of zero
(see Theorem~\ref{top-semiperfectnes-via-proj-covers-thm} below).

\begin{prop} \label{local-idempotent-times-Jacobson-radicals-agree}
 Let\/ $\R$ be a topologically semiperfect topological ring and
$e\in\R$ be a local idempotent.  Then
\begin{enumerate}
\renewcommand{\theenumi}{\alph{enumi}}
\item $eH(\R)=e\HH(\R)\subset e\R$;
\item $H(\R)e=\HH(\R)e\subset \R e$.
\end{enumerate}
\end{prop}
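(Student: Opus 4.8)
The plan is to establish~(b); part~(a) then follows by the same argument with ``left'' and ``right'' interchanged. Since $H(\R)\subseteq\HH(\R)$ always holds, we have $H(\R)e\subseteq\HH(\R)e$, so only the reverse inclusion requires proof.

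The first step is to apply Corollary~\ref{local-idempotent-cor} to the underlying abstract ring of $\R$ together with the local idempotent~$e$; the topology is irrelevant there, and condition~(1) of that corollary holds by the hypothesis that $e\R e$ is local. Hence its ``moreover'' clause tells us that $H(\R)e$ is the unique maximal submodule of the cyclic left $\R$\+module $\R e$. In particular, since every proper submodule of a cyclic (hence finitely generated) module is contained in a maximal one, every proper left $\R$\+submodule of $\R e$ is contained in $H(\R)e$.

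The second step is to observe that $\HH(\R)e$ is exactly such a proper submodule. As $\HH(\R)$ is a two-sided ideal of~$\R$, the set $\HH(\R)e$ is a left $\R$\+submodule of $\R e$. Were it all of $\R e$, we would get $e\in\HH(\R)e\subseteq\HH(\R)$; but $e$ is idempotent, so Lemma~\ref{no-idempotents-in-topological-Jacobson} would force $e=0$, which is impossible because a local idempotent is nonzero (otherwise $e\R e$ would be the zero ring, which is not a local ring). Therefore $\HH(\R)e\subsetneq\R e$, whence $\HH(\R)e\subseteq H(\R)e$, and~(b) follows. For~(a) one argues identically, using that $e\R$ is a cyclic right $\R$\+module whose unique maximal submodule is $eH(\R)$ and that $e\HH(\R)\subseteq\HH(\R)$ is a proper right $\R$\+submodule of $e\R$.

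I do not expect a serious obstacle in this argument. The only point that needs care is recognizing that Corollary~\ref{local-idempotent-cor} may be invoked for $\R$ viewed purely as an abstract ring, so that $H(\R)e$ is genuinely the radical of the cyclic module $\R e$, notwithstanding that $\R$ need not be semiperfect as an abstract ring. In fact the same reasoning proves slightly more than is stated: topological semiperfectness of $\R$ is not actually used, only that $e$ is a local idempotent and that $\HH(\R)$ is a two-sided ideal of $\R$ containing no nonzero idempotents.
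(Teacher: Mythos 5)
Your argument is correct, and it takes a genuinely different (and more elementary) route than the paper. The paper's proof passes to the topological quotient ring $\S=\R/\HH(\R)$, invokes Theorem~\ref{topologically-semiperfect-structural-properties} to know that $\S$ is topologically semisimple, identifies $e\R/e\HH$ and $\R e/\HH e$ with $g\S$ and $\S g$ for the image idempotent~$g$, uses Lemmas~\ref{idempotents-in-topologically-semisimple-rings} and~\ref{simple-contra-modules} to see that these are simple, and then concludes that the surjection of simple modules $e\R/eH\rarrow e\R/e\HH$ must be an isomorphism. You instead use only the ``moreover'' clause of Corollary~\ref{local-idempotent-cor} (that $He$ is the unique maximal submodule of the cyclic module $\R e$) together with Lemma~\ref{no-idempotents-in-topological-Jacobson} (to see that $\HH e\ne\R e$, since otherwise $e\in\HH$ would be a nonzero idempotent), and the standard Zorn's lemma fact that a proper submodule of a finitely generated module is contained in a maximal one, hence in the unique maximal one $He$. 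This bypasses the structural theory entirely and, as you correctly observe, proves the stronger statement that $eH(\R)=e\HH(\R)$ and $H(\R)e=\HH(\R)e$ for a local idempotent $e$ in an arbitrary complete separated right linear topological ring, with no semiperfectness hypothesis. What the paper's route buys is mainly coherence with the surrounding development: the identification of $e\R/e\HH$ and $\R e/\HH e$ with simple modules over the topologically semisimple quotient is reused in the proofs of Proposition~\ref{discrete-fg-projective-covers} and Lemma~\ref{simple-contramodules-projective-covers}, so the authors get it along the way; your argument is self-contained and shorter.
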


\begin{proof}
 Let $\HH=\HH(\R)$ be the topological Jacobson radical of $\R$ and
$\S=\R/\HH$ be the related topological quotient ring.
 By Theorem~\ref{topologically-semiperfect-structural-properties},
the ideal $\HH$ is strongly closed in $\R$ and the quotient ring
$\S$ is topologically semisimple.

 Let $g\in\S$ denote the image of $e\in\R$ under the natural
surjective homomorphism $\R\rarrow\S$.
 By Lemma~\ref{no-idempotents-in-topological-Jacobson}, we have
$e\notin\HH$,
so $g\ne0$.
 Hence the element~$g$ is a local idempotent in~$\S$.
 By Lemma~\ref{idempotents-in-topologically-semisimple-rings}(3--4),
the right $\S$\+module $g\S$ is discrete and simple, and
the left $\S$\+contramodule $\S g$ is simple.
 By Lemma~\ref{simple-contra-modules}, the left $\S$\+module
$\S g$ is simple, too.

 Thus the $\R$\+modules $g\S\simeq e\R/e\HH$ and $\S g\simeq\R e/\HH e$
are also simple.
 On the other hand, let $H=H(\R)$ be the abstract Jacobson radical.
 By Corollary~\ref{local-idempotent-cor}(4), the right $\R$\+module
$e\R/eH(\R)$ is simple (since $e\in\R$ is a local idempotent).
 Now we have a surjective map of simple right $\R$\+modules
$e\R/eH\rarrow e\R/e\HH$, which has to be an isomorphism;
hence $eH=e\HH$.
 Similarly, the left $\R$\+module $\R e/He$ is simple by
Corollary~\ref{local-idempotent-cor}(5), and it follows that
$He=\HH e$.
\end{proof}

\begin{cor} \label{Jacobson-closure}
 In any topologically semiperfect topological ring\/ $\R$,
the topological Jacobson radical\/ $\HH(\R)$ is equal to
the topological closure of the Jacobson radical $H(\R)$,
that is\/ $\HH(\R)=\overline{H(\R)}$.
\end{cor}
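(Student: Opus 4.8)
The plan is to prove the two inclusions $\overline{H(\R)}\subseteq\HH(\R)$ and $\HH(\R)\subseteq\overline{H(\R)}$ separately, writing $H:=H(\R)$ and $\HH:=\HH(\R)$.

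The inclusion $\overline{H}\subseteq\HH$ is immediate: one always has $H\subseteq\HH$, and $\HH$ is a closed subset of $\R$ (being an intersection of open, hence closed, right ideals; alternatively it is even strongly closed by Theorem~\ref{topologically-semiperfect-structural-properties}(a)), so passing to closures gives the claim.

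For the nontrivial inclusion I would fix $h\in\HH$ and exhibit a net of elements of $H$ converging to $h$. The key input is Theorem~\ref{topologically-semiperfect-rings}(3): there is a zero-convergent family $\mathbf e=(e_z)_{z\in Z}\in\R[[Z]]$ of pairwise orthogonal local idempotents with $\sum_{z\in Z}e_z=1$ in $\R$. The idea is to resolve $h$ along this complete orthogonal family. Since $\sum_{z\in Z}e_z=1$ means that the net of finite partial sums $\bigl(\sum_{z\in F}e_z\bigr)_{F}$, indexed by the finite subsets $F\subseteq Z$, converges to $1$, and since right multiplication by the fixed element $h$ is continuous (and additive), the net $\bigl(\sum_{z\in F}e_zh\bigr)_{F}$ converges to $1\cdot h=h$ in $\R$. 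It then suffices to note that each partial sum lies in $H$: applying Proposition~\ref{local-idempotent-times-Jacobson-radicals-agree}(a) to the local idempotent $e_z$ gives $e_zh\in e_z\HH=e_zH\subseteq H$, and since $H$ is a two-sided ideal it is closed under finite sums, so $\sum_{z\in F}e_zh\in H$ for every finite $F$. Hence $h$ is a limit of a net in $H$, i.e.\ $h\in\overline{H}$. Combining the two inclusions yields $\HH(\R)=\overline{H(\R)}$.

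The argument is short, and the only point requiring a little care is the passage to the infinite sum $h=\sum_{z\in Z}e_zh$, which relies on completeness of $\R$ together with joint continuity of the multiplication, so that right multiplication by $h$ commutes with the limit of finite partial sums. If one wishes, one can additionally record that the family $(e_zh)_{z\in Z}$ is itself zero-convergent (for an open right ideal $\I$, comparing $\sum_{z\in F_0}e_zh$ with $\sum_{z\in F_0\cup\{z\}}e_zh$ for a sufficiently large finite $F_0$ forces $e_zh\in\I$ for all $z\notin F_0$), but this is not strictly needed: a net in $H$ converging to $h$ already witnesses membership in the closure. I do not foresee a genuine obstacle; the real content of the corollary has already been supplied by Proposition~\ref{local-idempotent-times-Jacobson-radicals-agree} and Theorem~\ref{topologically-semiperfect-structural-properties}.
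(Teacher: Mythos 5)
Your argument is correct and coincides with the paper's own proof: both use the complete zero-convergent family of orthogonal local idempotents from Theorem~\ref{topologically-semiperfect-rings}(3) to write $h=\sum_{z\in Z}e_zh$ as a limit of finite partial sums lying in $H(\R)$, via Proposition~\ref{local-idempotent-times-Jacobson-radicals-agree}(a). The only difference is that you spell out the easy inclusion $\overline{H(\R)}\subseteq\HH(\R)$ and the continuity of right multiplication, which the paper leaves implicit.
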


\begin{proof}
 Let $\mathbf e=(e_z)_{z\in Z}\in\R[[Z]]$ be a zero-convergent family
of orthogonal local idempotents such that $\sum_{z\in Z}e_z=1$
in~$\R$ (as in Theorem~\ref{topologically-semiperfect-rings}(3)).
 Then for any element $r\in\R$ we have $r=\sum_{z\in Z}re_z=
\sum_{z\in Z}e_zr$ (where both the infinite sums are understood as
the limits of finite partial sums in the topology of~$\R$).
 Given an element $h\in\HH(\R)$, we have $e_zh\in H(\R)$ and
$he_z\in H(\R)$ by
Proposition~\ref{local-idempotent-times-Jacobson-radicals-agree}.
\end{proof}

 An example showing that the Jacobson radical $H(\R)$ of
a topologically semiperfect topological ring $\R$ need not
be closed in $\R$, and therefore can be a proper subset of
the topological Jacobson radical $\HH(\R)$, will be given below in
Example~\ref{topological-jacobson-orthogonalization-counterex}(1).

At this point, we can also finish the proof of existence of projective covers of finitely generated discrete modules.

\begin{prop} \label{discrete-fg-projective-covers}
 Let\/ $\R$ be a topologically semiperfect topological ring.
 Then any finitely generated discrete right\/ $\R$\+module
has a projective cover in the category\/ $\Modr\R$.
\end{prop}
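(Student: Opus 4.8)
The plan is to reduce the statement, via Lemma~\ref{discrete-fg-projective-covers-provisional}, to the assertion that every \emph{simple} discrete right $\R$\+module has a projective cover in $\Modr\R$, and then to exhibit such covers explicitly from the idempotents supplied by topological semiperfectness.

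First I would apply Theorem~\ref{topologically-semiperfect-rings}(3) to fix a zero-convergent family $\mathbf e=(e_z)_{z\in Z}\in\R[[Z]]$ of pairwise orthogonal local idempotents with $\sum_{z\in Z}e_z=1$ in $\R$, and Theorem~\ref{topologically-semiperfect-structural-properties} to record that $\HH=\HH(\R)$ is strongly closed and that the quotient ring $\S=\R/\HH$ is topologically semisimple. Writing $g\:\R\rarrow\S$ for the projection and $g_z=g(e_z)$, Lemma~\ref{no-idempotents-in-topological-Jacobson} gives $e_z\notin\HH$, so every $g_z$ is nonzero and hence a local idempotent in $\S$ (a nonzero quotient of the local ring $e_z\R e_z$ is local, cf.\ the discussion opening Section~\ref{surjective-homomorphisms-secn}). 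Continuity of~$g$ shows that $(g_z)_{z\in Z}\in\S[[Z]]$ is a zero-convergent family of pairwise orthogonal local idempotents with $\sum_{z\in Z}g_z=1$, so Lemma~\ref{simples-over-top-semisimple-via-idempotents}(a) applies: every simple discrete right $\S$\+module is isomorphic to $g_z\S$ for some $z\in Z$. Since $\HH$ annihilates every simple discrete right $\R$\+module by~\cite[Lemma~7.2(ii)]{Pproperf}, every simple discrete right $\R$\+module is in fact a discrete right $\S$\+module, hence isomorphic to some $g_z\S$.

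It then remains to verify, for each $z\in Z$, that the natural surjection $e_z\R\rarrow e_z\R/e_z\HH\simeq g_z\S$ (the right $\R$\+module isomorphism being the one used in the proof of Proposition~\ref{local-idempotent-times-Jacobson-radicals-agree}) is a projective cover in $\Modr\R$. The module $e_z\R$ is a direct summand of $\R$, hence projective; its kernel is $e_z\HH$, which equals $e_zH(\R)=(e_z\R)\cdot H(\R)$ by Proposition~\ref{local-idempotent-times-Jacobson-radicals-agree}(a), a superfluous submodule of the finitely generated module $e_z\R$ by the standard property of $PH$ recalled in the proof of Corollary~\ref{local-idempotent-cor}. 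Hence $e_z\R\rarrow g_z\S$ is a projective cover, and with Lemma~\ref{discrete-fg-projective-covers-provisional} the proposition follows.

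I do not expect a genuine obstacle here, as every required ingredient has already been established. The only points that need a little care are the identification of the class of simple discrete right $\R$\+modules with $\{g_z\S\}_{z\in Z}$ — which rests on $\HH$ killing all discrete simples, so that $\R$ and $\S$ have the same simple discrete modules — and the elementary observation that $e_z\R\cap\HH=e_z\HH$, which makes $e_z\R/e_z\HH\simeq g_z\S$ hold as right $\R$\+modules.
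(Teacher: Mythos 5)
Your proposal is correct and follows essentially the same route as the paper: reduce to simple discrete modules via Lemma~\ref{discrete-fg-projective-covers-provisional}, identify all discrete simples with the modules $g_z\S\simeq e_z\R/e_z\HH=e_z\R/e_zH(\R)$ using Lemma~\ref{simples-over-top-semisimple-via-idempotents}(a) and Proposition~\ref{local-idempotent-times-Jacobson-radicals-agree}(a), and exhibit $e_z\R\rarrow g_z\S$ as the projective cover. The only cosmetic difference is that you verify the cover property directly via the superfluousness of $e_z\R\cdot H(\R)$, whereas the paper cites Corollary~\ref{local-idempotent-cor} (property~(2) and the uniqueness assertion), which encapsulates exactly that argument.
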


\begin{proof}
Lemma~\ref{discrete-fg-projective-covers-provisional} reduces that task
to proving the existence of projective covers of simple discrete
right $\R$\+modules.

 Let $\mathbf e=(e_z)_{z\in Z}\in\R[[Z]]$ be a zero-convergent family
of orthogonal local idempotents such that $\sum_{z\in Z}e_z=1$
in~$\R$.
 As in the proof of
Proposition~\ref{local-idempotent-times-Jacobson-radicals-agree},
we put $\HH=\HH(\R)$ and $\S=\R/\HH$.
 Let $g_z\in\S$ denote the image of $e_z\in\R$ under the natural
surjective homomorphism $\R\rarrow\S$.
 Then $\mathbf g=(g_z)_{z\in Z}\in\S[[Z]]$ is a zero-convergent
family of orthogonal primitive/local idempotents such that
$\sum_{z\in Z}g_z=1$ in~$\S$.

 By~\cite[Lemma~7.2(ii)]{Pproperf}, any simple discrete right
$\R$\+module is annihilated by $\HH$, so it comes from a simple
discrete right $\S$\+module.
 By Lemma~\ref{simples-over-top-semisimple-via-idempotents}(a) and
Proposition~\ref{local-idempotent-times-Jacobson-radicals-agree}(a),
it follows that any simple discrete right $\R$\+module has the form
$g_z\S\simeq e_z\R/e_z\HH=e_z\R/e_zH(\R)$ for some $z\in Z$.
 Applying Corollary~\ref{local-idempotent-cor} (property~(2) and
the uniqueness assertion at the end) we can conclude that any
simple right $\R$\+module of this form has a projective cover.
\end{proof}

Finally, we aim at proving that the existence of projective covers of discrete simple or finitely generated modules characterizes topological
semiperfectness when $\R$ has a countable base of neighborhoods of zero.
The argument is based on the following proposition.

\begin{prop} \label{countable-decomposition-of-idempotents-prop}
Let\/ $\R$ be a topological ring such that each
simple discrete right $\R$\+module admits a projective cover
in\/ $\Modr\R$.
 Given an idempotent element $e\in\R$ such that $e\R$ has a countable base of open neighborhoods of zero (as a subspace of\/~$\R$; this is always the case
if\/ $\R$ itself has a countable base of open neighborhoods of zero),
 then there exists a zero-convergent finite or countable family $e_1$,
$e_2$, $e_3$,~\dots\ of pairwise orthogonal local idempotents in\/
$\R$ such that $e = \sum_i e_i$.
\end{prop}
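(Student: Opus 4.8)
The plan is to decompose the idempotent $e$ inductively, peeling off one local idempotent at a time, using the existence of projective covers of discrete simples together with the structure results of Section~\ref{proj-cover-discrete}. First I would observe that, by Proposition~\ref{structure-fg-discrete-prop} applied to the finitely generated discrete right $\R$\+module $e\R/\I$ for an open right ideal $\I\subset\R$ (or more directly, by the proof of Lemma~\ref{discrete-fg-projective-covers-provisional} applied to $e\R$), the quotient $e\R/eH$ is a semisimple discrete right $\R$\+module, where $H=H(\R)$. Choosing a simple direct summand, we obtain a primitive idempotent of the discrete quotient ring picture; the point is to lift it. Concretely, starting from $e$, I would find a local idempotent $e_1\in\R$ with $e_1 = e e_1 = e_1 e$ and such that $e_1\R/e_1 H$ is one of the simple summands of $e\R/eH$. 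Such a lift exists: by Proposition~\ref{discrete-fg-projective-covers} (whose proof only needs the projective-cover hypothesis, not full semiperfectness) a projective cover $P\to S$ of that simple summand $S$ of $e\R/eH$ exists, and $P$ has the form $f\R$ for a local idempotent $f$ by Lemma~\ref{projective-cover-of-simple}; the composite $f\R\to e\R/eH\to$ (factoring through $e\R$ by projectivity) gives an idempotent endomorphism of $e\R$ whose image is a copy of $f\R$, and after orthogonalizing within the corner ring $e\R e$ one extracts a local idempotent $e_1$ orthogonal to the complement. Then $e' = e - e_1$ is again an idempotent, $e'\R\cong$ the complementary summand direction, and $e'\R/e'H$ has one fewer simple summand "of that type"—but in general it need not have fewer summands at all, so the naive induction on the number of summands of $e\R/eH$ does not terminate. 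This is where the countability hypothesis enters.

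The key device to organize the infinite induction is to use the countable base of neighborhoods of zero in $e\R$. Fix a decreasing chain $e\R = \I_0 \supset \I_1 \supset \I_2 \supset \cdots$ of open subgroups (which we may take to be of the form $e\I_n$ for a countable base $\I_n$ of open right ideals of $\R$) forming a base of neighborhoods of zero in $e\R$. I would construct the sequence $e_1, e_2, e_3, \dots$ of pairwise orthogonal local idempotents together with a decreasing sequence of idempotents $e = f_0 \succ f_1 \succ f_2 \succ \cdots$ (meaning $f_n = f_{n-1} - e_n$, all orthogonal, $e_n = f_{n-1} e_n = e_n f_{n-1}$) so that at stage $n$ we ensure the "error" $f_n$ lies deep in the filtration: specifically, arrange that $1 - $ (the partial sum $e_1 + \cdots + e_n$) annihilates more and more, i.e.\ $f_n\R \subset \I_n$ is forced, or at least $f_n \in \I_n$. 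To do this, at stage $n$ I choose $e_n$ to be a local idempotent summand of $f_{n-1}\R$ chosen so that $e_n$ "covers" a simple summand of $f_{n-1}\R/f_{n-1}H$ that is not already killed modulo $\I_n$; iterating the peeling finitely many times within a single "block" (since $f_{n-1}\R/(f_{n-1}H + \I_n)$ is a finitely generated discrete module over $\R$, hence has finite length as it is a finite direct sum of simples by semisimplicity of $f_{n-1}\R/f_{n-1}H$), we reduce $f$ until $f_n\R \subset \I_n + f_n H$ and then, because $\I_n$ is open and $H$-adic-type superfluity arguments (Corollary~\ref{local-idempotent-cor}, the superfluity of $PH$ in finitely generated projectives) apply, conclude $f_n \in \I_n$. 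Then the family $(e_n)$ is zero-convergent by construction (its tail sums $f_n \to 0$), the $e_n$ are pairwise orthogonal local idempotents by construction, and $\sum_n e_n = e$ because $e - (e_1 + \cdots + e_n) = f_n \to 0$ in $\R$ and $\R$ is complete and separated. If the process terminates at some finite stage (i.e.\ some $f_n = 0$), we get a finite family, which is the semiperfect-corner case.

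The main obstacle I anticipate is the bookkeeping that guarantees \emph{termination of each block} and \emph{genuine progress between blocks}: one must verify that $f_{n-1}\R/(f_{n-1}H + e\I_n)$ really is a module of finite length (so that only finitely many local idempotents are split off before $f_n$ is pushed into $\I_n$), and that splitting off a local idempotent $e_n$ from $f_{n-1}$ using a projective cover does not disturb the idempotents $e_1, \dots, e_{n-1}$ already chosen—this requires working inside the corner ring $f_{n-1}\R f_{n-1}$ and using that lifting of a single idempotent (here the primitive idempotent of the finite-length quotient) along a nilpotent-ideal-like superfluous situation is unobstructed, which is a finitary idempotent-lifting statement (no infinite orthogonalization needed at any single step). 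A secondary subtlety is that $e\R/eH$ might a priori be infinite-dimensional (infinitely many simple summands), so "covering a new simple at each block" is exactly what prevents stabilization; one should phrase progress as: the image of $f_n$ in $\S = \R/\HH(\R)$—wait, $\HH$ is not yet known here—rather, progress is measured purely topologically by $f_n \in \I_n$, and the finite-length quotients $f_{n-1}\R/(f_{n-1}H + e\I_n)$ shrink as $n$ grows since $e\I_n$ grows. Once these finiteness and non-interference points are nailed down, the completeness of $\R$ delivers $e = \sum_i e_i$ and the zero-convergence, finishing the proof.
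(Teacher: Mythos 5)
Your outer scheme is exactly the paper's: fix a countable descending base $e\R=\I_0\supset\I_1\supset\I_2\supset\cdots$ of open neighborhoods of zero in $e\R$, split off finitely many pairwise orthogonal local idempotents at stage~$n$ so that the remaining idempotent $f_n$ lands in $\I_n$, and conclude $e=\sum_i e_i$ from completeness and separatedness. Two points, however, need attention. First, your opening claim that $e\R/eH$ is a semisimple discrete right $\R$\+module is false in general: under the stated hypotheses only the \emph{discrete} quotients $e\R/(eH+\I)$ for $\I$ open are semisimple (Proposition~\ref{structure-fg-discrete-prop} and the proof of Lemma~\ref{discrete-fg-projective-covers-provisional} apply to the finitely generated \emph{discrete} module $e\R/\I$, not to $e\R$ itself, which is not discrete). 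For instance, for $\R=\End_k(V)^\rop$ with $V$ an infinite-dimensional vector space, the finite topology, and $e=1$, one has $H(\R)=0$ while $\R$ is not a semisimple module over itself. Consequently the finite-length assertion for $f_{n-1}\R/(f_{n-1}H+\I_n)$ has to be justified through the discrete quotient $f_{n-1}\R/(f_{n-1}\R\cap\I_n)$, not through a purported semisimplicity of $f_{n-1}\R/f_{n-1}H$; the assertion itself is true, but your justification of it is not.

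Second, the ``bookkeeping'' you flag as the main obstacle --- termination of each block and non-interference with previously chosen idempotents --- is a genuine unresolved gap in your one-simple-at-a-time peeling, and the paper's proof sidesteps it entirely. At stage~$n$ one takes the projective cover $P\rarrow f_{n-1}\R/(f_{n-1}\R\cap\I_n)$ of the whole finitely generated discrete quotient in one go; by Lemma~\ref{discrete-fg-projective-covers-provisional} this $P$ is already a \emph{finite} direct sum $P_1\oplus\dots\oplus P_m$ of projective covers of simple discrete modules, i.e.\ of modules of the form $e_i\R$ with $e_i$ local. One then uses the standard fact that a cover is a direct summand of any precover to realize $P$ as a direct summand of $f_{n-1}\R$ compatibly with the two maps onto the quotient (compose the splitting with the inverse of the induced automorphism of $P$, as in the displayed diagram of the paper's proof). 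The complementary summand $f_n\R$ then maps to zero in $f_{n-1}\R/(f_{n-1}\R\cap\I_n)$, which says directly that $f_n\in\I_n$ --- no superfluity argument, no induction on the length of the top, and no orthogonalization inside a corner ring are needed, and the idempotents produced at stage~$n$ automatically live in $f_{n-1}\R f_{n-1}$, hence are orthogonal to all earlier ones. I would recommend replacing your per-block induction by this single application of the cover-versus-precover comparison; as written, your proposal does not actually establish that each block terminates.
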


\begin{proof}
Let $\I\subset e\R$ be an open right submodule. Then $e\R/\I$ has a projective cover $P\rarrow e\R/\I$ by Lemma~\ref{discrete-fg-projective-covers-provisional}. In fact, the proof of the lemma reveals that $P$ is a finite direct sum $P=P_1\oplus\cdots\oplus P_n$ of projective covers of simple discrete right $\R$\+modules. Since the projection $\pi\:e\R\rarrow e\R/\I$ is a projective precover, we can without loss of generality assume that $P$ is a direct summand of $e\R$.
 More precisely, due to the projective precover properties of
the surjective morphisms of right $\R$\+modules $\bigoplus_{i=1}^nP_i
\rarrow e\R/\I$ and $\pi\:e\R\rarrow e\R/\I$, there exist
dotted arrows making the following two squares commutative
\[
 \xymatrix{
  \bigoplus\nolimits_{i=1}^n P_i \ar@{>..>}[r] \ar@{->>}[d] &
  e\R \ar@{..>>}[r] \ar@{->>}[d]^\pi &
  \bigoplus\nolimits_{i=1}^n P_i \ar@{->>}[d]
	\\
	e\R/\I \ar@{=}[r] &
	e\R/\I \ar@{=}[r] &
	e\R/\I
 } 
\]
Since $\bigoplus_{i=1}^n P_i\rarrow e\R/\I$ is
a cover, the composition $\phi\:\bigoplus_{i=1}^n P_i\rarrow e\R\rarrow
\bigoplus_{i=1}^n P_i$ is invertible.
 Replacing the morphism $e\R\rarrow\bigoplus_{i=1}^n P_i$ by its
composition with~$\phi^{-1}$ does not disturb commutativity of
the diagram, but makes the new composition
$\bigoplus_{i=1}^n P_i\rarrow e\R\rarrow\bigoplus_{i=1}^n P_i$
equal to the identity map.
 Now the projectors $e\R\rarrow P_i\rarrow e\R$ of the right
$\R$\+module $e\R$ onto its direct summands $P_i$ provide
local idempotents $e_i\in\R$, $1\le n$ such that
$e_1$,~\dots,~$e_n$ and $f = e-\sum_{i=1}^ne_i$ are pairwise orthogonal,
the restriction $\bigoplus_{i=1}^ne_i\R\rarrow e\R/\I$ of $\pi$ is a projective cover and the restriction $f\R\rarrow e\R/\I$ of $\pi$ vanishes. The latter condition means that $f\in\I$.

Next we choose a countable descending sequence $e\R\supset\I_0\supset\I_1\supset\I_2\supset\cdots$ of open right $\R$\+submodules 
which forms a base of open neighborhoods of zero in $e\R$.
In particular $\bigcap_{m=0}^\infty\I_m=0$ and, as in the previous paragraph, we find a pairwise orthogonal sequence of idempotents
$e_1$,~\dots, $e_{n_0}$, $f_0$ such that all $e_i$ are local, $e=\sum_{i=1}^{n_0} e_i+f_0$ and $f_0\in\I_0$. Since $f_0\R\cap\I_1$ is an open right submodule of $f_0\R$, we can likewise find pairwise orthogonal idempotents $e_{n_0+1}$,~\dots, $e_{n_1}$, $f_1$ such that all $e_i$ are local, $f_0=\sum_{i=n_0+1}^{n_1} e_i+f_1$ and $f_1\in\I_1$. Note that then also $e_i\in f_0\R\subset\I_0$ for all $n_0+1\le i\le n_1$ and
$e=\sum_{i=1}^{n_1} e_i+f_1$.
Iterating this procedure, if it turns out that $f_k\ne0$ for all
integers $k\ge0$, then we construct a countable sequence of pairwise 
orthogonal local idempotents $e_1$, $e_2$, $e_3$,~\dots\ which converges to zero in the topology of $\R$ and, by construction, the sum
$\sum_{i=1}^\infty e_i$ (which is by definition the limit of finite 
subsums) is equal to~$e$.
 Otherwise, if $f_k=0$, then $e=\sum_{i=1}^{n_k}e_i$, where
$e_1$,~\dots, $e_{n_k}$ is a finite sequence of pairwise orthogonal
local idempotents.
\end{proof}

\begin{thm}\label{top-semiperfectnes-via-proj-covers-thm}
Let $\R$ be a topological ring with a countable base of open neighborhoods
of zero. Then the following are equivalent:
\begin{enumerate}
\item $\R$ is topologically semiperfect;
\item each discrete simple right $\R$\+module has a projective cover in\/ $\Modr\R$;
\item each finitely generated discrete right $\R$\+module has a projective cover in the category\/ $\Modr\R$.
\end{enumerate}
\end{thm}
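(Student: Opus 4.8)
The plan is to establish the cycle of implications $(1)\Rightarrow(3)\Rightarrow(2)\Rightarrow(1)$. Two of these are immediate from what has already been proved: $(1)\Rightarrow(3)$ is exactly Proposition~\ref{discrete-fg-projective-covers}, which holds for any topologically semiperfect topological ring and does not use the countability hypothesis at all, while $(3)\Rightarrow(2)$ is trivial since every simple discrete right $\R$\+module is in particular finitely generated. Thus the whole substance of the theorem, and the only point at which the countable base of neighborhoods of zero is actually needed, lies in the remaining implication $(2)\Rightarrow(1)$.

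For $(2)\Rightarrow(1)$ I would use the characterization of topological semiperfectness furnished by condition~(3) of Theorem~\ref{topologically-semiperfect-rings}: it suffices to exhibit a set $Z$ together with a zero-convergent family $\mathbf e=(e_z)_{z\in Z}\in\R[[Z]]$ of pairwise orthogonal idempotents such that $\sum_{z\in Z}e_z=1$ in $\R$ and each $e_z\R e_z$ is a local ring. This is precisely the output of Proposition~\ref{countable-decomposition-of-idempotents-prop} when applied to the idempotent $e=1$: assuming $(2)$, and noting that $1\cdot\R=\R$ has a countable base of open neighborhoods of zero by hypothesis, the proposition produces a finite or countable zero-convergent family $e_1,e_2,e_3,\dots$ of pairwise orthogonal local idempotents with $\sum_i e_i=1$. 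Letting $Z$ be the resulting (finite or countable) index set then verifies condition~(3) of Theorem~\ref{topologically-semiperfect-rings}, so $\R$ is topologically semiperfect.

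In other words, the real work of $(2)\Rightarrow(1)$ has been discharged in advance into Proposition~\ref{countable-decomposition-of-idempotents-prop}, and at the level of the theorem itself there is no genuine obstacle; the proof is essentially an assembly of previously established facts. The only mild care needed is bookkeeping: one should keep track of the ``finite or countable'' dichotomy (the procedure of splitting off local idempotents either terminates with some $f_k=0$, or runs forever producing a genuinely countable zero-convergent sequence), but in both cases the family lands in condition~(3) of Theorem~\ref{topologically-semiperfect-rings}. The point where countability is genuinely indispensable is internal to Proposition~\ref{countable-decomposition-of-idempotents-prop}, where the iterative construction runs along a countable descending chain of open submodules forming a base of neighborhoods of zero, and the convergence of the partial sums $\sum_{i\le N}e_i$ to $1$ is exactly the statement that this chain has zero intersection.
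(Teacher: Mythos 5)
Your proposal is correct and follows essentially the same route as the paper: $(1)\Rightarrow(3)$ via Proposition~\ref{discrete-fg-projective-covers}, and $(2)\Rightarrow(1)$ by applying Proposition~\ref{countable-decomposition-of-idempotents-prop} to the idempotent $e=1$. The only cosmetic difference is that you close the cycle with the trivial implication $(3)\Rightarrow(2)$, whereas the paper cites Lemma~\ref{discrete-fg-projective-covers-provisional} for the equivalence $(2)\Leftrightarrow(3)$; both are valid.
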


\begin{proof}
(1)\,$\Longrightarrow$\,(3)
Here we just repeat Proposition~\ref{discrete-fg-projective-covers}.

(2)\,$\Longrightarrow$\,(1)
By Proposition~\ref{countable-decomposition-of-idempotents-prop},
there exists a finite or countable zero-convergent family
$e_1$, $e_2$, $e_3$,~\dots\ of pairwise orthogonal local idempotents in\/
$\R$ such that $1 = \sum_i e_i$.
Hence, $\R$ is topologically semiperfect by the very definition in
Section~\ref{topologically-semiperfect-secn}.

(2)\,$\Longleftrightarrow$\,(3)
This is just Lemma~\ref{discrete-fg-projective-covers-provisional}.
\end{proof}

\Section{Projective Covers of Lattice-Finite Contramodules}

In this section we will give analogous results to those of Section~\ref{proj-cover-discrete}, but for contramodules. This partially restores the symmetry between the behavior of left and right modules for classical semiperfect rings. However, we need to be careful as to what is the correct contramodule analogue of a finitely generated discrete module. It turns out that the class of finitely generated contramodules is too wide and not so well behaved in some aspects. For the main results, we rather constrain ourselves to lattice-finite contramodules as defined in Subsection~\ref{finiteness-subsecn}. The reason for this is essentially captured the following lemma.

\begin{lem} \label{lattice-finite-proj-lem}
 Let\/ $\R$ be a topologically semiperfect topological ring and $e_1, \dots, e_n$ a~finite sequence of pairwise orthogonal local idempotents. Then the projective contramodule\/ $\coprod_{i=1}^n\R e_i$ is lattice-finite.
On the other hand, $\R$ itself is not a lattice-finite contramodule in general.
\end{lem}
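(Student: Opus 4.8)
The plan is to prove the two halves of the lemma separately. For the first, I would put $e=e_1+\dots+e_n$, an idempotent of $\R$, and observe that the finite coproduct $\coprod_{i=1}^n\R e_i$ in $\R\Contra$ is the direct sum of the direct summands $\R e_i\subseteq\R$, which, since the $e_i$ are pairwise orthogonal with $\sum_i e_i=e$, is canonically isomorphic to the cyclic projective contramodule $\R e$; this is the finite analogue of the computation in the proof of Theorem~\ref{topologically-semiperfect-rings}, part~(3)$\Rightarrow$(1). So it suffices to show $\R e$ is lattice-finite. Put $\HH=\HH(\R)$ and $\S=\R/\HH$; by Theorem~\ref{topologically-semiperfect-structural-properties} the ideal $\HH$ is strongly closed and $\S$ is topologically semisimple. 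The surjection $\R\rarrow\S$ induces a contramodule epimorphism $\R e\rarrow\S g$, where $g$ is the image of $e$, with kernel $\R e\cap\HH=\HH e=\bigoplus_{i=1}^n\HH e_i$; hence $\R e/\HH e\cong\S g=\bigoplus_{i=1}^n\S g_i$, where each $g_i$ is the image of $e_i$, a local idempotent of $\S$ (nonzero by Lemma~\ref{no-idempotents-in-topological-Jacobson}, hence local because a surjective ring homomorphism takes local idempotents to local idempotents or to zero), so that $\S g_i$ is a simple left $\S$\+contramodule by Lemma~\ref{idempotents-in-topologically-semisimple-rings}. Thus $\R e/\HH e$ is semisimple of finite length~$n$.

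Now suppose $\C_x\subseteq\R e$, $x\in X$, are subcontramodules with $\sum_{x\in X}\C_x=\R e$. A contramodule epimorphism carries the join of a family of subcontramodules to the join of the images, so the surjection $\R e\rarrow\R e/\HH e$ yields $\sum_{x\in X}\overline{\C_x}=\R e/\HH e$; since $\R e/\HH e$ has finite length (and a finite-length object of an abelian category is trivially lattice-finite, there being no infinite strictly increasing chain of subobjects), there is a finite $F\subseteq X$ with $\sum_{x\in F}\overline{\C_x}=\R e/\HH e$, that is, $\sum_{x\in F}\C_x+\HH e=\R e$. It then remains only to show that $\HH e$ is superfluous in $\R e$, for then $\sum_{x\in F}\C_x=\R e$. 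By the standard facts about the modular subobject lattice of an abelian category — a superfluous subobject of a direct summand is superfluous in the ambient object, and a finite join of superfluous subobjects is superfluous — it is enough to check that $\HH e_i$ is superfluous in $\R e_i$ for each~$i$. Here $\HH e_i=H(\R)e_i$ by Proposition~\ref{local-idempotent-times-Jacobson-radicals-agree}, and by Corollary~\ref{local-idempotent-cor} this is a proper submodule, in fact the \emph{unique} maximal submodule, of the cyclic $\R$\+module $\R e_i$. Any proper subcontramodule of $\R e_i$ is in particular a proper $\R$\+submodule, hence contained in this unique maximal submodule $\HH e_i$; so a subcontramodule $D$ with $D+\HH e_i=\R e_i$ cannot be proper, which is exactly the required superfluousness.

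For the second assertion I would first record the general fact that no topological ring $\R$ admitting an infinite coproduct decomposition $\R=\coprod_{z\in Z}\P_z$ in $\R\Contra$ with all $\P_z\neq0$ is lattice-finite: the images of the coproduct injections form a family of subcontramodules $\P_z\subseteq\R$ whose join is $\R$, whereas for every finite $F\subseteq Z$ the join $\sum_{z\in F}\P_z$ is the direct summand $\coprod_{z\in F}\P_z$, whose complement $\coprod_{z\in Z\setminus F}\P_z$ is nonzero because $Z\setminus F$ is nonempty (the coproduct injections are split monomorphisms). It then suffices to exhibit one topologically semiperfect ring with such a decomposition, and the product $\R=\prod_{i\in\mathbb N}k_i$ of countably many fields with the product topology works: its standard idempotents $(e_i)_{i\in\mathbb N}$ form a zero-convergent family of pairwise orthogonal local idempotents summing to $1$, so $\R$ is topologically semiperfect by Theorem~\ref{topologically-semiperfect-rings}(3) and $\R\cong\coprod_{i\in\mathbb N}\R e_i$ in $\R\Contra$ by Theorem~\ref{topologically-semiperfect-rings}(1).

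I expect the delicate point to be the superfluousness of $\HH e$ in $\R e$. There is no Nakayama lemma for the topological Jacobson radical of a general topologically semiperfect ring (since $\HH$ need not be topologically left T\+nilpotent), so one cannot conclude that $\HH e=\HH\tim\R e$ is superfluous by any completeness or successive-approximation argument in the topology of $\R$ — which is also why the argument must not, and does not, rely on $\R$ having a countable base of neighborhoods of zero. The way around this is to exploit that each $e_i$ is a \emph{local} idempotent, which forces the cyclic module $\R e_i$ to have a unique maximal submodule and thereby makes superfluousness of $\HH e_i$ completely elementary, and then to assemble the $n$ summands by purely lattice-theoretic (modular) manipulations internal to $\R\Contra$.
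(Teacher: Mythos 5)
Your proof is correct and follows essentially the same route as the paper's: pass to the finite-length semisimple quotient $\P/\HH\P$ (using $\HH e_i=H(\R)e_i$ from Proposition~\ref{local-idempotent-times-Jacobson-radicals-agree}), extract a finite subfamily there, and descend via superfluousness of $H(\R)\P$ in the finitely generated projective module $\P$. The only divergences are cosmetic: you verify superfluousness by hand from the unique-maximal-submodule statement of Corollary~\ref{local-idempotent-cor} where the paper cites the general facts that $H(\R)\P=\rad(\P)$ is superfluous for finitely generated projectives, and for the second assertion you use the product of infinitely many fields rather than the paper's $\End_D(D^{(\omega)})^\rop$ --- both are topologically semisimple rings admitting an infinite nontrivial coproduct decomposition in $\R\Contra$, so either works.
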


\begin{proof}
Let $\P=\coprod_{i=1}^n\R e_i$ (we can consider the same finite direct sum in $\R\Modl$ and $\R\Contra$) and denote by $\HH=\HH(\R)$ the topological Jacobson radical.
It follows from the proof of Proposition~\ref{local-idempotent-times-Jacobson-radicals-agree}(b) that $\HH\P=H(\R)\P$ is a subcontramodule and the quotient is $\P/\HH\P=\bigoplus_{i=1}^n\S e_i$, where
$\S=\R/\HH$ is the related topologically semisimple topological quotient ring.
In particular, $\P/\HH\P$ is a finite sum of simple contramodules and, as any finite length contramodule, it is clearly lattice-finite.

Suppose now that $\P=\sum_{x\in X}\C_x$ for some set $X$ and a family of subcontramodules $\C_x\subset\P$. Then $\sum_{x\in X}(\C_x+\HH\P)/\HH\P = \P/\HH\P$, so there is a finite set $F\subset X$ such that $\P=\HH\P+\sum_{x\in F}\C_x=H(\R)\P+\sum_{x\in F}\C_x$. This is a sum of subcontramodules of $\P$, but as it is finite, it is also a sum of the underlying $\R$\+submodules (cf.\ Subsection~\ref{finiteness-subsecn}).
As $H(\R)\P$ coincides with the Jacobson radical of the underlying left $\R$\+module of $\P$ by~\cite[Proposition 17.10]{AF} and so is a superfluous $\R$\+submodule by~\cite[Theorem 10.4(1)]{AF}, we get $\P=\sum_{x\in F}\C_x$.
Hence, $\P$ is lattice-finite.

If $D$ is a division ring, $M=D^{(\omega)}\in D\Modl$ and $\R=\Hom_D(M,M)^\rop$ with the finite topology, then $\R$ is clearly topologically semiperfect (even topologically semisimple). If we denote by $M\larrow M\,:\!e_i$ the projectors to the copies of $D$ for $i<\omega$, then $\R=\sum_{i<\omega}\R e_i$ as a contramodule (since $1=\sum_{i<\omega}e_i$ in $\R$), but there is no finite subset $F\subset\omega$ for which $\sum_{i\in F}\R e_i$. So this particular topological ring $\R$ is not lattice-finite in $\R\Contra$
(and another topologically semiperfect topological ring of this kind is also considered in Example~\ref{topological-jacobson-orthogonalization-counterex}).
\end{proof}

Our next aim is to get a better control of simple contramodules over a topologically semiperfect topological ring (these are certainly lattice-finite). We start with general lemmas.

\begin{lem} \label{finitely-generated-contramodules-lemma}
 Let\/ $\R$ be a topological ring and\/ $\J\subset\R$ be a closed
right ideal.  Then
\begin{enumerate}
\renewcommand{\theenumi}{\alph{enumi}}
\item a left\/ $\R$\+contramodule is finitely generated if and only
if its underlying left\/ $\R$\+module is finitely generated;
\item for any finitely generated left\/ $\R$\+contramodule\/ $\C$,
one has\/ $\J\tim\C=\J\C$.
\end{enumerate}
\end{lem}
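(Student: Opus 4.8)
The plan is to treat the two parts separately: part~(a) is purely formal, while part~(b) reduces, via a surjection from a free contramodule, to an explicit computation for a free contramodule on a finite set.

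For part~(a), I would use that $\R[[X]]=\R[X]$ for a finite set $X$, so that the underlying module of a free $\R$\+contramodule on a finite set is a free $\R$\+module of the same finite rank, and a contramodule morphism out of $\R[[X]]$ is the same thing as an $\R$\+module morphism out of $\R[X]$. If $\C$ is finitely generated as a contramodule, i.e.\ a quotient of some $\R[[X]]$ with $X$ finite, then its underlying module is a quotient of $\R[X]$ (because the forgetful functor $\R\Contra\rarrow\R\Modl$ is exact), hence finitely generated. Conversely, if $c_1,\dots,c_n$ generate the underlying module of $\C$, then the contramodule morphism $\R[[\{1,\dots,n\}]]\rarrow\C$ sending $i\mapsto c_i$ has image a subcontramodule whose underlying set contains $c_1,\dots,c_n$ and therefore equals $\C$; so this morphism is surjective and exhibits $\C$ as a finitely generated contramodule.

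For part~(b), the inclusion $\J\C\subseteq\J\tim\C$ holds for any closed right ideal (this is recalled in Subsection~\ref{contramodules-subsecn}), so only $\J\tim\C\subseteq\J\C$ requires an argument. Using that $\C$ is finitely generated, I would fix a surjective contramodule morphism $p\colon\F\rarrow\C$ with $\F=\R[[X]]$ and $X$ finite. The key observation is then that both $\J\cdot(-)$ and $\J\tim(-)$ transport along $p$: one has $\J\C=p(\J\F)$ because $p$ is surjective on underlying abelian groups and $p(jw)=j\,p(w)$; and one has $\J\tim\C=p(\J\tim\F)$ because the square expressing that $p$ is a contramodule homomorphism, together with the naturality of the inclusion $\J[[-]]\hookrightarrow\R[[-]]$, gives a commutative square relating the defining maps $\J[[\F]]\rarrow\F$ and $\J[[\C]]\rarrow\C$, while $\J[[p]]$ is surjective. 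The surjectivity of $\J[[p]]$ is the one place a choice is used: $p$ is onto as a map of sets, hence admits a set-theoretic section $\sigma$, and then $\J[[p]]\circ\J[[\sigma]]=\id$ because $\J[[-]]$ is a functor on sets. This reduces the claim to showing $\J\tim\F\subseteq\J\F$ for $\F=\R[[X]]=\bigoplus_{x\in X}\R x$ free on a finite set.

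This last point is the only step I expect to require genuine thought, though it is short. The subgroup $\J\tim\F$ is the image of $\J[[\F]]\hookrightarrow\R[[\F]]\xrightarrow{\phi_X}\F$; a typical element of $\J[[\F]]$ is a zero-convergent family $(a_v)_{v\in\F}$ with $a_v\in\J$, and ``opening the parentheses'' expresses its image as $\sum_{x\in X}\bigl(\sum_{v\in\F}a_v v_x\bigr)x$, where $v_x\in\R$ is the $x$\+coordinate of $v$ and each inner sum is the topological limit of its finite partial sums in $\R$. Since $a_v v_x\in\J\R\subseteq\J$, every such partial sum lies in the subgroup $\J$, and since $\J$ is \emph{closed}, so does the limit; hence the image lies in $\bigoplus_{x\in X}\J x=\J\F$. (This is where the finiteness of $X$ and the closedness of $\J$ enter the argument.) Putting the steps together, $\J\tim\C=p(\J\tim\F)\subseteq p(\J\F)=\J\C\subseteq\J\tim\C$, which yields the desired equality.
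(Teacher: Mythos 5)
Your proof is correct and follows essentially the same route as the paper's: part~(a) via the identification $\R[[X]]=\R[X]$ for finite $X$, and part~(b) by transporting both $\J\C$ and $\J\tim\C$ along a surjection from a free contramodule $\R[[X]]$ with $X$ finite and then computing $\J\tim\R[[X]]=\J[[X]]=\J[X]=\J\R[[X]]$ there. You simply spell out in more detail the two points the paper leaves implicit, namely the surjectivity of $\J[[p]]$ (via a set-theoretic section) and the role of the closedness of $\J$ in the opening-of-parentheses computation.
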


\begin{proof}
 Part~(a) holds because $\R[[X]]=\R[X]$ for a finite set~$X$.
 More precisely, one can say that a finite subset of
an $\R$\+contramodule generates it as a contramodule if and only
if it generates its underlying $\R$\+module.
 In part~(b), consider a surjective morphism of
left $\R$\+contramodules $f\:\P\rarrow\Q$.
 Then it is clear from the definitions that $\J\tim\Q=f(\J\tim\P)$
and $\J\Q=f(\J\P)$.
 In particular, let $X$ be a finite set for which there is
a surjective morphism of $\R$\+contramodules $f\:\R[[X]]\rarrow\C$.
 Then $\J\tim\C=f(\J\tim\R[[X]])=f(\J[[X]])=f(\J[X])=f(\J\R[X])=\J\C$.
\end{proof}

\begin{lem} \label{idempotents-decompose-contramodule-into-product}
 Let\/ $\R$ be a topological ring and\/ $\mathbf e=(e_z)_{z\in Z}
\in\R[[Z]]$ be a zero-convergent family of orthogonal idempotents
such that\/ $\sum_{z\in Z}e_z=1$ in\/~$\R$.
 Then, for any left\/ $\R$\+contramodule\/ $\C$, there is a natural
isomorphism of abelian groups\/ $\C\simeq\prod_{z\in Z}e_z\C$ given
by the map taking an element $c\in\C$ to the collection of elements
$(e_zc\in e_z\C)_{z\in Z}$.
 In particular, if\/ $\C\ne0$, then there exists $z\in Z$ for which
$e_z\C\ne0$.
\end{lem}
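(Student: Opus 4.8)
The plan is to show that the map $\psi\colon\C\rarrow\prod_{z\in Z}e_z\C$, \ $c\mapsto(e_zc)_{z\in Z}$, is an isomorphism of abelian groups by exhibiting an explicit two-sided inverse. The obvious candidate is $\phi\colon\prod_{z\in Z}e_z\C\rarrow\C$ sending a family $(d_z)_{z\in Z}$ to $\sum_{z\in Z}e_zd_z\in\C$, the infinite formal linear combination being evaluated through the contraaction of $\C$; this is well defined precisely because $(e_z)_{z\in Z}$ is zero-convergent in $\R$ (that is, $\mathbf e\in\R[[Z]]$), so $\sum_z e_zd_z$ makes sense no matter what the elements $d_z\in\C$ are. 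Both $\psi$ and $\phi$ are clearly additive and natural in $\C$, so it remains to check that $\phi\psi=\id_\C$ and $\psi\phi=\id$.

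The verifications rest on two elementary rules for manipulating zero-convergent formal $\R$-linear combinations in a contramodule, both immediate from the contramodule axioms. First, for $r\in\R$, a zero-convergent family $(r_z)_{z\in Z}$ in $\R$, and arbitrary elements $(a_z)_{z\in Z}$ of $\C$, one has $r\cdot\sum_z r_za_z=\sum_z(rr_z)a_z$; this holds because the contraaction $\pi_\C$ (by the associativity axiom) and the maps $\R[[f]]$ attached to set maps $f\colon Z\rarrow\C$ (by construction of the free contramodules) are homomorphisms of underlying left $\R$-modules, so the scalar $r$ may be pulled through the coefficients of $\mathbf e$ already inside $\R[[Z]]$. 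Second, for a zero-convergent family $(r_z)_{z\in Z}$ in $\R$ such that $(r_zs_z)_{z\in Z}$ is again zero-convergent, and a single element $c\in\C$, one has $\sum_z r_z(s_zc)=\bigl(\sum_z r_zs_z\bigr)c$; this follows by applying the associativity axiom to the element $\sum_z r_z\cdot(s_z\cdot c)$ of $\R[[\R[[\C]]]]$ and opening the parentheses.

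With these in hand, $\phi\psi(c)=\sum_z e_z(e_zc)=\bigl(\sum_z e_z^2\bigr)c=\bigl(\sum_z e_z\bigr)c=1\cdot c=c$, using $e_z^2=e_z$ and $\sum_z e_z=1$. For $\psi\phi=\id$, fix $(d_z)_{z\in Z}$ with $d_z\in e_z\C$ and an index $w\in Z$; the $w$-th coordinate of $\psi\phi\bigl((d_z)_{z\in Z}\bigr)$ equals $e_w\cdot\sum_z e_zd_z=\sum_z(e_we_z)d_z$ by the first rule, and by pairwise orthogonality $e_we_z=0$ for $z\neq w$, so this combination has finite support and reduces to the single term $e_wd_w$, which is $d_w$ since $d_w\in e_w\C$. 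Hence $\psi$ is bijective with inverse $\phi$, which gives $\C\simeq\prod_{z\in Z}e_z\C$. The final assertion follows immediately: a nonzero abelian group cannot be isomorphic to a product of zero groups, so $\C\neq0$ forces $e_z\C\neq0$ for some $z\in Z$.

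I do not expect a serious obstacle: the statement is elementary once the calculus of zero-convergent formal combinations is available. The one point deserving care is to make sure each manipulation genuinely descends from the contramodule axioms rather than being taken for granted, in particular the passage from $\sum_{z\in Z}(e_we_z)d_z$ to the finite sum $e_wd_w$, which uses that a formal combination with only finitely many nonzero coefficients is computed by the underlying $\R$-module structure. Alternatively, one can argue structurally: since $(e_z)_{z\in Z}$ is a zero-convergent family of pairwise orthogonal idempotents with $\sum_z e_z=1$, the free left $\R$-contramodule $\R$ decomposes as $\coprod_{z\in Z}^{\R\Contra}\R e_z$ (exactly the isomorphism established in the proof of the implication (3)$\Rightarrow$(1) in Theorem~\ref{topologically-semiperfect-rings}, where the local-ring hypothesis enters only at the very last step), whence $\C\simeq\Hom^\R(\R,\C)\simeq\Hom^\R\bigl(\coprod_z\R e_z,\C\bigr)\simeq\prod_z\Hom^\R(\R e_z,\C)\simeq\prod_z e_z\C$; a routine unwinding of these identifications shows that the composite is $c\mapsto(e_zc)_{z\in Z}$.
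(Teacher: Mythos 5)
Your proof is correct. It is, however, a genuinely different route from the one the paper takes: the paper does not carry out the verification at all, but instead refers to the second paragraph of the proof of \cite[Lemma~8.1(b)]{Pproperf} and, alternatively, gives a slick reduction --- the complete zero-convergent orthogonal family $(e_z)_{z\in Z}$ defines a continuous ring homomorphism $\prod_{z\in Z}\boZ\rarrow\R$, \ $(n_z)_{z\in Z}\longmapsto\sum_{z\in Z}n_ze_z$, so $\C$ becomes a contramodule over the topological product $\prod_{z\in Z}\boZ$ by contrarestriction of scalars, and the isomorphism $\C\simeq\prod_{z\in Z}e_z\C$ is then an instance of the general description of contramodules over topological products of topological rings. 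Your direct argument --- checking that $c\mapsto(e_zc)_{z\in Z}$ and $(d_z)_{z\in Z}\mapsto\sum_{z\in Z}e_zd_z$ are mutually inverse --- is sound: the two manipulation rules you isolate are exactly the instances of the contraassociativity axiom $\pi_\C\circ\R[[\pi_\C]]=\pi_\C\circ\phi_\C$ that are needed, the computation $\sum_ze_z(e_zc)=(\sum_ze_z)c=c$ uses unitality correctly, and your explicit attention to the passage from $\sum_{z}(e_we_z)d_z$ to the finite-support evaluation $e_wd_w$ (via the compatibility of the contraaction with the inclusion $\R[\C]\hookrightarrow\R[[\C]]$) is well placed, as that is the one step that is easy to wave at. What your route buys is self-containedness, at the cost of redoing by hand what the cited lemma packages; what the paper's route buys is brevity and a reusable structural principle. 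Your closing alternative via $\C\simeq\Hom^\R(\coprod_{z\in Z}\R e_z,\C)\simeq\prod_{z\in Z}e_z\C$ is also valid (the decomposition $\R\simeq\coprod_{z\in Z}\R e_z$ from the proof of Theorem~\ref{topologically-semiperfect-rings} indeed does not use locality of the idempotents) and is the closest in spirit to the paper's structural viewpoint.
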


\begin{proof}
 This is explained~\cite[second paragraph of the proof of
Lemma~8.1(b)]{Pproperf}.
 Alternatively, the following construction allows to refer to
the assertion of~\cite[Lemma~8.1(b)]{Pproperf} rather than its proof.
 Consider the ring $\prod_{z\in Z}\boZ$, and endow it with the product
topology of the product of discrete rings of integers~$\boZ$.
 Then there exists a (unique) continuous homomorphism of topological
rings $\prod_{z\in Z}\boZ\rarrow\R$ given by the formula
$(n_z\in\boZ)_{z\in Z}\longmapsto\sum_{z\in Z}n_ze_z\in\R$.
 Hence the $\R$\+contramodule $\C$ becomes a contramodule over
$\prod_{z\in Z}\boZ$ via the contrarestriction of scalars
(see~\cite[Section~2.9]{Pproperf}).
 It remains to apply the description of contramodules over topological
products of topological rings given in~\cite[Lemma~8.1(b)]{Pproperf}
to the $\left(\prod_{z\in Z}\boZ\right)$\+con\-tra\-mod\-ule~$\C$.
\end{proof}

Now we are ready to observe an important relation between simple contramodules over a topologically semiperfect topological ring and simple contramodules over its topologically semisimple quotient modulo the topological Jacobson radical.

\begin{prop} \label{simple-contramodules-killed-by-top-Jacobson}
 Let\/ $\R$ be a topologically semiperfect topological ring,
$\HH=\HH(\R)$ be its topological Jacobson radical, and\/ $\S=\R/\HH$
be the related topological quotient ring.
 Then, for any simple left\/ $\R$\+contramodule\/ $\C$, one has\/
$\HH\tim\C=0$.
 In other words, any simple left\/ $\R$\+contramodule comes from
a (simple) left\/ $\S$\+contramodule via the contrarestriction of
scalars with respect to the natural surjective homomorphism of
topological rings\/ $\R\rarrow\S$.
\end{prop}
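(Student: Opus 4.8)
The plan is to reduce to a Nakayama-type argument on a cyclic projective contramodule of the form $\R e$, where $e$ is a local idempotent supplied by the topological semiperfectness of $\R$. First I would fix, using Theorem~\ref{topologically-semiperfect-rings}(3), a zero-convergent family $\mathbf e=(e_z)_{z\in Z}\in\R[[Z]]$ of pairwise orthogonal local idempotents with $\sum_{z\in Z}e_z=1$ in $\R$. Since $\C$ is simple it is nonzero, so Lemma~\ref{idempotents-decompose-contramodule-into-product} (the isomorphism $\C\simeq\prod_{z\in Z}e_z\C$) produces an index $z_0\in Z$ with $e_{z_0}\C\ne0$. Choosing $0\ne d=e_{z_0}c\in e_{z_0}\C$ and noting $e_{z_0}d=d$, one gets a nonzero homomorphism of left $\R$\+contramodules $\phi\:\R e_{z_0}\rarrow\C$ sending $e_{z_0}\mapsto d$ (this is the restriction to the direct summand $\R e_{z_0}\subseteq\R$ of the contramodule homomorphism $\R\rarrow\C$ corresponding to $d\in\C=\Hom^\R(\R,\C)$). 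As $\C$ is simple, $\phi$ is surjective.

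Next I would determine the subcontramodule lattice of $\R e_{z_0}$. Regarding $\R$ as an abstract ring with abstract Jacobson radical $H=H(\R)$, Corollary~\ref{local-idempotent-cor}(5) and its concluding ``moreover'' clause, applied to the local idempotent $e_{z_0}$, say that $He_{z_0}$ is the unique maximal submodule of the left $\R$\+module $\R e_{z_0}$. Since $\R e_{z_0}$ is a cyclic, hence finitely generated, $\R$\+contramodule and $\HH=\HH(\R)$ is a closed right ideal, Lemma~\ref{finitely-generated-contramodules-lemma}(b) gives $\HH\tim(\R e_{z_0})=\HH e_{z_0}$, while Proposition~\ref{local-idempotent-times-Jacobson-radicals-agree}(b) gives $\HH e_{z_0}=He_{z_0}$. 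In particular $\HH e_{z_0}$ is itself a subcontramodule of $\R e_{z_0}$, and since it coincides with the unique maximal submodule it is the unique maximal subcontramodule (any proper subcontramodule is in particular a proper submodule). Now $\ker\phi$ is a proper subcontramodule of $\R e_{z_0}$ (proper because $\C=\R e_{z_0}/\ker\phi\ne0$), so $\ker\phi\subseteq\HH e_{z_0}$; and because the operation $\J\tim({-})$ commutes with surjections (as recorded in the proof of Lemma~\ref{finitely-generated-contramodules-lemma}), we obtain $\HH\tim\C=\phi(\HH\tim(\R e_{z_0}))=\phi(\HH e_{z_0})\simeq\HH e_{z_0}/\ker\phi$, which is a proper subcontramodule of $\C=\R e_{z_0}/\ker\phi$. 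Simplicity of $\C$ then forces $\HH\tim\C=0$.

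For the closing ``in other words'' clause I would invoke Theorem~\ref{topologically-semiperfect-structural-properties}(a): $\HH$ is strongly closed in $\R$, so by the description recalled in Subsection~\ref{top-ss-and-perfect-subsecn} the contrarestriction functor $\S\Contra\rarrow\R\Contra$ along $\R\rarrow\S$ is fully faithful with essential image precisely the $\R$\+contramodules $\D$ with $\HH\tim\D=0$; hence $\C$ is the contrarestriction of some $\S$\+contramodule, which is simple because the exact faithful functor of contrarestriction reflects simplicity. The step I expect to be the main (though still modest) obstacle is the bookkeeping that ties together the three avatars of ``$\HH$ times $\R e_{z_0}$'' --- the subcontramodule $\HH\tim(\R e_{z_0})$, the submodule $\HH e_{z_0}$, and the submodule $He_{z_0}$ --- so that the unique-maximality statement is legitimately about subcontramodules and the passage through the surjection $\phi$ is valid; Lemma~\ref{finitely-generated-contramodules-lemma}(b) and Proposition~\ref{local-idempotent-times-Jacobson-radicals-agree}(b) are exactly what make this go through.
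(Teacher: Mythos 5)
Your proof is correct and follows essentially the same route as the paper's: locate $z_0$ with $e_{z_0}\C\ne0$ via Lemma~\ref{idempotents-decompose-contramodule-into-product}, surject $\R e_{z_0}\twoheadrightarrow\C$, and identify $\HH\tim\C$ with $\HH\C$ using Lemma~\ref{finitely-generated-contramodules-lemma}(b) and Proposition~\ref{local-idempotent-times-Jacobson-radicals-agree}(b). The only (harmless) variation is the last step, where the paper quotes Lemma~\ref{simple-contra-modules} to get $H(\R)\C=0$ directly from simplicity of the underlying module, whereas you argue via the unique maximal submodule $H(\R)e_{z_0}\subset\R e_{z_0}$ supplied by Corollary~\ref{local-idempotent-cor}.
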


\begin{proof}
 Let $\mathbf e=(e_z)_{z\in Z}\in\R[[Z]]$ be a zero-convergent family
of orthogonal local idempotents such that $\sum_{z\in Z}e_z=1$
in~$\R$.
 By Lemma~\ref{idempotents-decompose-contramodule-into-product},
there exists $z\in Z$ such that $e_z\C\ne0$.
 Choosing a nonzero element in $e_z\C$, we can construct a surjective
morphism of $\R$\+contramodules $f\:\R e_z\rarrow\C$.
 By Lemma~\ref{simple-contra-modules}, \,$\C$ is a simple left
$\R$\+module; hence $f(H(\R)e_z)=H(\R)\C=0$.
 Applying Lemma~\ref{finitely-generated-contramodules-lemma}(b) and
Proposition~\ref{local-idempotent-times-Jacobson-radicals-agree}(b),
we conclude that $\HH\tim\C=\HH\C=f(\HH e_z)=f(H(\R)e_z)=0$.

 Alternatively, one can invoke
Proposition~\ref{local-idempotent-times-Jacobson-radicals-agree}(a)
to the effect that $e_zh\in H(\R)$ for all $h\in\HH$ and $z\in Z$.
 Hence $e_zhc\in H(\R)\C=0$ for all $c\in\C$, in view of
Lemma~\ref{simple-contra-modules}.
 According to
Lemma~\ref{idempotents-decompose-contramodule-into-product},
it follows that $hc=0$, and it remains to refer to
Lemma~\ref{finitely-generated-contramodules-lemma}(b).
\end{proof}

 We do \emph{not} know whether the assertion of
Proposition~\ref{simple-contramodules-killed-by-top-Jacobson}
holds true for an arbitrary (not necessarily topologically
semiperfect) topological ring~$\R$.
Now we, however, aim at proving the existence of projective covers
of simple contramodules.

\begin{lem} \label{fin-gen-modules-contramodules-proj-covers-the-same}
 Let\/ $\R$ be a topological ring and\/ $\C$ be a finitely generated
left\/ $\R$\+contramodule.
 Then\/ $\C$ has a projective cover in\/ $\R\Contra$ if and only if
the underlying left\/ $\R$\+module of\/ $\C$ has a projective
cover in\/ $\R\Modl$.
 The forgetful functor\/ $\R\Contra\rarrow\R\Modl$ takes any
projective cover of\/ $\C$ in\/ $\R\Contra$ to a projective cover
in\/ $\R\Modl$.
\end{lem}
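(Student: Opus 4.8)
The plan is to reduce everything to a comparison of projective covers having a \emph{finitely generated projective} source, exploiting three facts: the forgetful functor $F\colon\R\Contra\rarrow\R\Modl$ is exact and faithful; it restricts to an equivalence between finitely generated projective $\R$\+contramodules and finitely generated projective $\R$\+modules \cite[Section~10]{PPT}; and ``finitely generated'' is unambiguous for $\C$ by Lemma~\ref{finitely-generated-contramodules-lemma}(a).

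First I would prove the ``if'' part in a strong form: if $q\colon Q\rarrow\C$ is a projective cover of the underlying module in $\R\Modl$, then $Q$ underlies a finitely generated projective contramodule $\tilde Q$ (by \cite{PPT}), the map $q$ is automatically a morphism $\tilde Q\rarrow\C$ in $\R\Contra$, and it is a projective cover there. For the claim that $q$ is a contramodule morphism: write $\tilde Q$ as a contramodule direct summand of a finite free contramodule $\R[[X]]=\R[X]$; a module homomorphism out of $\R[X]$ is determined by its values on the finite generating set $X$ and hence agrees with the contramodule homomorphism extending those same values, and one passes to the summand via a contramodule splitting. Since $F$ is faithful and exact, $q\colon\tilde Q\rarrow\C$ is then an epimorphism in $\R\Contra$ whose kernel is the subcontramodule $\ker q$, superfluous as a submodule of $Q$. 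Superfluousness transfers upward for subcontramodules: if $\mathcal L\subseteq\tilde Q$ is a subcontramodule with $\ker q+\mathcal L=\tilde Q$, then this join of a two-element family of subcontramodules equals the sum of the underlying submodules (Subsection~\ref{finiteness-subsecn}), so $\ker q+F(\mathcal L)=Q$ forces $F(\mathcal L)=Q$, i.e.\ $\mathcal L=\tilde Q$. Thus $\tilde Q\rarrow\C$ is a projective cover in $\R\Contra$, and its underlying module is the given projective cover of $\C$ in $\R\Modl$.

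For the ``only if'' part, let $p\colon\P\rarrow\C$ be a projective cover in $\R\Contra$. A standard argument gives that $\P$ is finitely generated: lift a surjection $\R[X]\rarrow\C$ with finite $X$ through $p$ using projectivity of $\R[X]$, and observe that the lift is onto because its image together with the superfluous $\ker p$ exhausts $\P$. By \cite{PPT}, $F(\P)$ is finitely generated projective, and $F(p)$ is surjective by exactness; the remaining point is that $\ker p$ is superfluous as a submodule of $F(\P)$. I expect this to be the main obstacle, since $F$ does not preserve subobjects and so does not obviously reflect superfluousness. The route I would take is through radicals: $\ker p$ is a superfluous subcontramodule, hence contained in $\mathrm{rad}^c(\P)$, the intersection of all maximal subcontramodules, so it suffices to prove $\mathrm{rad}^c(\P)=\rad(F(\P))$ for $\P$ finitely generated projective. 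The inclusion ``$\supseteq$'' is immediate, as every maximal subcontramodule is a maximal submodule (Lemma~\ref{simple-contra-modules}) and so contains $\rad(F(\P))=H(\R)\P$ \cite[Proposition~17.10]{AF}. The inclusion ``$\subseteq$'' is the delicate one, where a contramodule Nakayama-type argument is needed: $\mathrm{rad}^c(\P)$ is a superfluous subcontramodule of the finitely generated $\P$, and combining this with \cite[Lemma~3.4]{BPS} (cyclic submodules of a contramodule are subcontramodules) and the presentation of $\P$ as a summand of a finite free contramodule one rules out $\mathrm{rad}^c(\P)$ surjecting onto a simple quotient of $F(\P)$. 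Granting $\mathrm{rad}^c(\P)=\rad(F(\P))$, we obtain $\ker p\subseteq\rad(F(\P))$, which is superfluous in $F(\P)$ because $F(\P)$ is finitely generated \cite[Theorem~10.4]{AF}; hence $F(p)$ is a projective cover in $\R\Modl$. Finally, uniqueness of projective covers identifies $\P$ with the contramodule $\tilde Q$ built in the previous paragraph, so $F$ sends every projective cover of $\C$ in $\R\Contra$ to a projective cover of its underlying module in $\R\Modl$, completing the proof.
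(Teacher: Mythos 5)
Your ``if'' direction is essentially correct: lifting the module-theoretic projective cover to a finitely generated projective contramodule through the equivalence of \cite[Section~10]{PPT}, and then transferring superfluousness upward by observing that a \emph{finite} join of subcontramodules coincides with the sum of the underlying submodules, is sound (if more laborious than necessary).

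The ``only if'' direction, however, has a genuine gap, and it sits exactly where you flagged it. You propose to show that $\ker p$ is superfluous in the underlying module of $\P$ via the equality $\mathrm{rad}^c(\P)=\rad(\P)$ (intersection of all maximal subcontramodules versus intersection of all maximal submodules) for a finitely generated projective contramodule~$\P$. The inclusion $\mathrm{rad}^c(\P)\subseteq\rad(\P)$ is false, and no Nakayama-type argument can rescue it: a maximal submodule of the underlying module need not be a subcontramodule, so a simple module quotient of $\P$ need not be a contramodule quotient, and $\mathrm{rad}^c$ genuinely overshoots $\rad$. Concretely, take $\P=\R$ for the topologically semiperfect ring of Example~\ref{topological-jacobson-orthogonalization-counterex}. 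Every maximal subcontramodule $\MM\subset\R$ yields a simple contramodule $\R/\MM$, which is annihilated by $\HH(\R)$ by Proposition~\ref{simple-contramodules-killed-by-top-Jacobson}; hence $\HH(\R)\subseteq\mathrm{rad}^c(\R)$. On the other hand $\rad({}_\R\R)=H(\R)$, and in that example $H(\R)\subsetneq\HH(\R)$; choosing a maximal left ideal $\mathfrak m$ not containing $\HH(\R)$, one sees that $\mathrm{rad}^c(\R)$ does surject onto the simple module $\R/\mathfrak m$ --- precisely the situation your sketch claims to rule out. (Your first inclusion $\ker p\subseteq\mathrm{rad}^c(\P)$ is correct but too weak to be of use.) The paper's proof bypasses the superfluousness question altogether: among projective \emph{precovers} $p\:\P\rarrow\C$, the covers are characterized by the property that every endomorphism $f$ of $\P$ with $pf=p$ is invertible (\cite[Lemma~4.1]{Pproperf}), and since $\Hom^\R(\P,\P)=\Hom_\R(\P,\P)$ for the finitely presented contramodule $\P$, this condition reads identically in $\R\Contra$ and in $\R\Modl$; this yields both implications at once. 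You should redo the ``only if'' direction along these lines.
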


\begin{proof}
 Firstly, one observes that any projective cover of a finitely
generated $\R$\+con\-tra\-mod\-ule $\C$ is a finitely generated
projective $\R$\+contramodule.
 Indeed, by the definition, $\C$ has a finitely generated projective
precover, and any projective cover of $\C$ is a direct summand of
any projective precover.
 For the same reason, any projective cover of a finitely generated
module is a finitely generated projective module.

 Secondly, for any finitely presented $\R$\+contramodule $\P$ and
any $\R$\+contramodule $\C$, the forgetful functor induces
an isomorphism on the Hom groups $\Hom^\R(\P,\C)\simeq\Hom_\R(\P,\C)$
\,\cite[Section~10]{PPT}.
 In particular, the forgetful functor restricts to an equivalence
between the categories of finitely generated projective left
$\R$\+contramodules and finitely generated projective left $\R$\+modules
(cf.\ the discussion in Section~\ref{finiteness-subsecn}).
 Therefore, any finitely generated projective precover of
the underlying $\R$\+module of $\C$ in $\R\Modl$ comes from a finitely
generated projective precover of $\C$ in $\R\Contra$.

 Finally, a projective precover $p\:\P\rarrow\C$ in $\R\Contra$ is
a projective cover if and only if, for any endomorphism
$f\:\P\rarrow\P$ in $\R\Contra$, the equation $pf=p$ implies that
$f$~is invertible (cf.~\cite[Lemma~4.1]{Pproperf}).
 Projective covers in $\R\Modl$ can be characterized similarly.
 Since $\Hom^\R(\P,\P)=\Hom_\R(\P,\P)$, it follows that $p$~is
a projective cover in $\R\Contra$ if and only if it is
a projective cover in $\R\Modl$.
\end{proof}

\begin{lem} \label{simple-contramodules-projective-covers}
 Let\/ $\R$ be a topologically semiperfect topological ring.
 Then any simple left\/ $\R$\+contramodule has a projective
cover in the abelian category\/ $\R\Contra$, as well as in
the abelian category\/ $\R\Modl$.
\end{lem}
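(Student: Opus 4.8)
The plan is to show that every simple left $\R$-contramodule $\C$ is isomorphic to $\R e_z/H(\R)e_z$ for a suitable local idempotent $e_z$, and that the quotient map $\R e_z\rarrow\R e_z/H(\R)e_z$ is a projective cover in both $\R\Modl$ and $\R\Contra$. To set up, I would invoke Theorem~\ref{topologically-semiperfect-rings}(3) to fix a zero-convergent family $\mathbf e=(e_z)_{z\in Z}\in\R[[Z]]$ of pairwise orthogonal local idempotents with $\sum_{z\in Z}e_z=1$, put $\HH=\HH(\R)$ and $\S=\R/\HH$ (which is topologically semisimple by Theorem~\ref{topologically-semiperfect-structural-properties}), and let $g_z$ denote the image of $e_z$ in $\S$, so that $\mathbf g=(g_z)_{z\in Z}$ is a zero-convergent family of pairwise orthogonal primitive/local idempotents summing to~$1$.

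First I would reduce the problem to the semisimple quotient: by Proposition~\ref{simple-contramodules-killed-by-top-Jacobson} one has $\HH\tim\C=0$, so $\C$ is the contrarestriction of a simple left $\S$-contramodule, and Lemma~\ref{simples-over-top-semisimple-via-idempotents}(b) identifies the latter with $\S g_z$ for some $z\in Z$. Next I would identify $\S g_z$ with $\R e_z/H(\R)e_z$ as left $\R$-contramodules. The contramodule $\R e_z$ is a direct summand of the free contramodule $\R\in\R\Contra$, hence finitely generated projective, and its contraextension of scalars along $\R\rarrow\S$ is the corresponding direct summand $\S g_z$ of $\S$; at the same time this contraextension is $\R e_z/(\HH\tim\R e_z)$, and Lemma~\ref{finitely-generated-contramodules-lemma}(b) together with Proposition~\ref{local-idempotent-times-Jacobson-radicals-agree}(b) give $\HH\tim\R e_z=\HH e_z=H(\R)e_z$. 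This yields $\C\simeq\R e_z/H(\R)e_z$.

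Then I would verify that $p\colon\R e_z\rarrow\R e_z/H(\R)e_z\simeq\C$ is a projective cover in $\R\Modl$: the left $\R$-module $\R e_z$ is finitely generated projective, and since $e_z$ is a local idempotent, Corollary~\ref{local-idempotent-cor} (using property~(3) and the concluding uniqueness clause) shows that $H(\R)e_z$ is the unique maximal submodule of $\R e_z$, hence equal to its radical and therefore superfluous. Finally, since $\C$ is simple its underlying $\R$-module is simple by Lemma~\ref{simple-contra-modules}, hence cyclic, so $\C$ is a finitely generated contramodule by Lemma~\ref{finitely-generated-contramodules-lemma}(a); then Lemma~\ref{fin-gen-modules-contramodules-proj-covers-the-same} shows that $p$ is also a projective cover of $\C$ in $\R\Contra$, and it is carried to the module-level cover by the forgetful functor.

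I expect the only real work to be the identification step in the second paragraph — checking that contraextension of scalars carries the summand $\R e_z$ of $\R$ to the summand $\S g_z$ of $\S$, and that $\HH\tim\R e_z=H(\R)e_z$ — which amounts to combining Lemma~\ref{finitely-generated-contramodules-lemma}(b), Proposition~\ref{local-idempotent-times-Jacobson-radicals-agree}(b) and the explicit description of contraextension recalled in Subsection~\ref{top-ss-and-perfect-subsecn}. Everything else is a direct appeal to the discrete-ring facts of Section~\ref{discrete-secn} and to Lemma~\ref{fin-gen-modules-contramodules-proj-covers-the-same}, so I do not anticipate a genuine obstacle beyond this bookkeeping.
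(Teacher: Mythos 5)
Your proposal is correct and follows essentially the same route as the paper's own proof: reduce to the topologically semisimple quotient via Proposition~\ref{simple-contramodules-killed-by-top-Jacobson}, identify the simple contramodule with $\S g_z\simeq\R e_z/H(\R)e_z$ using Lemma~\ref{simples-over-top-semisimple-via-idempotents}(b) and Proposition~\ref{local-idempotent-times-Jacobson-radicals-agree}(b), obtain the module-level projective cover from Corollary~\ref{local-idempotent-cor}, and transfer it to $\R\Contra$ by Lemma~\ref{fin-gen-modules-contramodules-proj-covers-the-same}. The extra detail you supply for the identification step (via contraextension of scalars) is a correct elaboration of what the paper leaves implicit.
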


\begin{proof}
 The proof is similar to that of
Proposition~\ref{discrete-fg-projective-covers}, except that
in addition one has to use
Proposition~\ref{simple-contramodules-killed-by-top-Jacobson}
and Lemma~\ref{fin-gen-modules-contramodules-proj-covers-the-same}.
 Let $\mathbf e=(e_z)_{z\in Z}\in\R[[Z]]$ be a zero-convergent family
of orthogonal local idempotents such that $\sum_{z\in Z}e_z=1$
in~$\R$.
 Put $\HH=\HH(\R)$ and $\S=\R/\HH$.
 Let $g_z\in\S$ denote the image of $e_z\in\R$ under the natural
surjective homomorphism $\R\rarrow\S$.

 By Proposition~\ref{simple-contramodules-killed-by-top-Jacobson},
any simple left $\R$\+contramodule comes from a simple
left $\S$\+contramodule.
 By Lemma~\ref{simples-over-top-semisimple-via-idempotents}(b) and
Proposition~\ref{local-idempotent-times-Jacobson-radicals-agree}(b),
it follows that any simple left $\R$\+contramodule has the form
$\S g_z\simeq \R e_z/\HH e_z=\R e_z/H(\R)e_z$ for some $z\in Z$.
 Applying Corollary~\ref{local-idempotent-cor} (property~(3) and
the uniqueness assertion at the end) we can conclude that any
simple left $\R$\+contramodule $\C$ of this form has a projective cover
\emph{as an\/ $\R$\+module}, i.~e., in the category $\R\Modl$.
 By Lemma~\ref{fin-gen-modules-contramodules-proj-covers-the-same},
it then follows that
the same morphism is also
a projective cover of $\C$ in $\R\Contra$.
\end{proof}

Finally, our plan is to extend the existence of projective covers to all lattice-finite contramodules.

\begin{lem} \label{local-proj-contramodules-generate-lem}
 Let\/ $\R$ be a topologically semiperfect topological ring and
$X$ a set indexing all isomorphism classes
of simple left $\R$\+contramodules. For each $x\in X$, we denote
by $\P_x$ a projective cover of the corresponding simple contramodule.
Then $\{\P_x \mid x\in X\}$ is a set of generators for $\R\Contra$.
\end{lem}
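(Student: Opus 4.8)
The plan is to show that every left $\R$\+contramodule is a quotient of a coproduct of copies of the objects $\P_x$; in the cocomplete abelian category $\R\Contra$ this is exactly the assertion that $\{\P_x\mid x\in X\}$ is a set of generators.

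First, fix a zero-convergent family $\mathbf e=(e_z)_{z\in Z}\in\R[[Z]]$ of pairwise orthogonal local idempotents with $\sum_{z\in Z}e_z=1$, as provided by Theorem~\ref{topologically-semiperfect-rings}(3). The implication (3)$\Rightarrow$(1) of that theorem gives an isomorphism of $\R$\+contramodules $\R\simeq\coprod_{z\in Z}^{\R\Contra}\R e_z$, with each $\R e_z$ a direct summand of the free contramodule $\R$. Next I would identify each $\R e_z$ with one of the $\P_x$. Since $e_z$ is a local idempotent, $e_z\R e_z$ is a local ring, so by Corollary~\ref{local-idempotent-cor} (property~(3) and the final uniqueness assertion) the morphism $\R e_z\rarrow\R e_z/H(\R)e_z$ is a projective cover of a simple object in $\R\Modl$; by Lemma~\ref{fin-gen-modules-contramodules-proj-covers-the-same} it is also a projective cover in $\R\Contra$, and $\R e_z/H(\R)e_z$ is a simple contramodule by Lemma~\ref{simple-contra-modules}. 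Projective covers being unique up to isomorphism, $\R e_z\simeq\P_{x(z)}$, where $x(z)\in X$ is the isomorphism class of $\R e_z/H(\R)e_z$. (In fact, by Proposition~\ref{simple-contramodules-killed-by-top-Jacobson} together with Lemma~\ref{simples-over-top-semisimple-via-idempotents}(b), every isomorphism class of simple $\R$\+contramodules arises this way, so the assignment $z\mapsto x(z)$ is surjective onto $X$; we do not actually need this for the generating property.)

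Finally, let $\C$ be an arbitrary left $\R$\+contramodule. Choose a set $Y$ and a surjective $\R$\+contramodule homomorphism $\R[[Y]]\rarrow\C$ from a free contramodule (for instance $Y=\C$ with the canonical contraaction map $\R[[\C]]\rarrow\C$). Combining the isomorphisms $\R[[Y]]\simeq\coprod_{y\in Y}^{\R\Contra}\R$, the decomposition $\R\simeq\coprod_{z\in Z}^{\R\Contra}\R e_z$ above, and associativity of coproducts, we get $\R[[Y]]\simeq\coprod_{(y,z)\in Y\times Z}^{\R\Contra}\P_{x(z)}$, a coproduct of copies of the $\P_x$. Thus $\C$ is a quotient of a coproduct of copies of $\{\P_x\mid x\in X\}$, as desired. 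There is no serious obstacle here; the only points that require a little care are the identification $\R e_z\simeq\P_{x(z)}$ through uniqueness of projective covers (which relies on Lemma~\ref{fin-gen-modules-contramodules-proj-covers-the-same} to move between $\R\Modl$ and $\R\Contra$) and the bookkeeping of coproducts in $\R\Contra$ versus the free contramodule $\R[[Y]]$, keeping in mind that the forgetful functor does not preserve coproducts.
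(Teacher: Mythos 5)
Your proposal is correct and follows essentially the same route as the paper: decompose $\R\simeq\coprod_{z\in Z}\R e_z$ using a complete zero-convergent family of orthogonal local idempotents, identify each $\R e_z$ with some $\P_x$ via Corollary~\ref{local-idempotent-cor}, Lemma~\ref{fin-gen-modules-contramodules-proj-covers-the-same} and uniqueness of projective covers, and conclude. You merely spell out the final step (every contramodule is a quotient of a free one, which is a coproduct of copies of $\R$) that the paper leaves implicit.
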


\begin{proof}
 Let $\mathbf e=(e_z)_{z\in Z}\in\R[[Z]]$ be a zero-convergent family
of orthogonal local idempotents such that $\sum_{z\in Z}e_z=1$
in~$\R$.
 Then we know from Theorem~\ref{topologically-semiperfect-rings} that
$\R=\coprod_{z\in Z}\R e_z$.
 Moreover, using Corollary~\ref{local-idempotent-cor} and
Lemma~\ref{fin-gen-modules-contramodules-proj-covers-the-same} as in
the proof of the previous lemma, we see that each $\R e_z$ is
a projective cover of a simple contramodule.
 Finally, each $\R e_z$ is isomorphic to some $\P_x$ by the uniqueness
of projective covers.
\end{proof}

As a consequence, we have the following property of contramodules over topologically semiperfect topological rings which we do not expect to hold for a general topological ring.

\begin{lem}
 Let\/ $\R$ be a topologically semiperfect topological ring and $\C$ a nonzero contramodule. Then $\C$ has a simple subfactor.
\end{lem}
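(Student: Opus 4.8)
The plan is to leverage what has already been established about projective covers over a topologically semiperfect topological ring. By Lemma~\ref{simple-contramodules-projective-covers}, every simple left $\R$\+contramodule has a projective cover in $\R\Contra$, and by Lemma~\ref{local-proj-contramodules-generate-lem}, the family of all these projective covers is a set of generators of $\R\Contra$. So first I would fix a set $X$ indexing the isomorphism classes of simple left $\R$\+contramodules, write $p_x\:\P_x\rarrow S_x$ for a projective cover of the corresponding simple contramodule $S_x$, and invoke the generating property in the following standard form: for a set of generators of a cocomplete abelian category, every nonzero object receives a nonzero morphism from one of the generators (apply the defining property with the proper subobject $0\subset\C$). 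Since $\C\ne0$, this produces some $x\in X$ and a nonzero morphism of left $\R$\+contramodules $\phi\:\P_x\rarrow\C$.

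Next I would extract the desired simple subfactor from the image of $\phi$. Put $\mathfrak m=\ker(p_x)\subset\P_x$; since $p_x$ is a projective cover, $\mathfrak m$ is a superfluous subcontramodule of $\P_x$, and $\P_x/\mathfrak m\simeq S_x$ is simple. Because $\phi\ne0$, its kernel $\K=\ker(\phi)$ is a proper subcontramodule of $\P_x$, so superfluousness of $\mathfrak m$ forces $\K+\mathfrak m\ne\P_x$. Now set $\C'=\im(\phi)$ and $\C''=\phi(\mathfrak m)\subset\C'$. Using that $\phi$ induces an isomorphism $\P_x/\K\simeq\C'$, one computes
\[
 \C'/\C''\;\simeq\;\P_x/(\K+\mathfrak m),
\]
which is nonzero by the previous sentence and, since $\K+\mathfrak m\supset\mathfrak m$, is a quotient of the simple contramodule $\P_x/\mathfrak m\simeq S_x$. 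A nonzero quotient of a simple contramodule is simple, hence $\C'/\C''\simeq S_x$. As $\C''\subsetneq\C'\subset\C$ are subcontramodules of $\C$, this exhibits $\C'/\C''$ as a simple subfactor of $\C$.

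I do not expect a real obstacle here: the substantive work is already done by Lemma~\ref{local-proj-contramodules-generate-lem}, and the rest is a purely formal argument valid in any abelian category carrying a generating family of objects each possessing a superfluous subobject with simple quotient. The points to be mildly careful about are all routine: that a generating set of an abelian category detects nonzero objects by nonzero maps out of its members (an equivalent form of the definition of ``set of generators''), that the kernel of a projective cover is a superfluous subobject (immediate from the characterization of projective covers recalled in the proof of Lemma~\ref{fin-gen-modules-contramodules-proj-covers-the-same}), and the isomorphism $\phi(\P_x)/\phi(\mathfrak m)\simeq\P_x/(\K+\mathfrak m)$ for the epimorphism $\phi\:\P_x\rarrow\C'$. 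A more hands-on alternative, avoiding Lemma~\ref{local-proj-contramodules-generate-lem}, would start from Lemma~\ref{idempotents-decompose-contramodule-into-product} to find $z$ with $e_z\C\ne0$ and then run the same argument with $\P_x$ replaced by $\R e_z$ (whose unique maximal subcontramodule $\HH e_z=H(\R)e_z$ is superfluous by Corollary~\ref{local-idempotent-cor}), but the route through the generating family is shorter.
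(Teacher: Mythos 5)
Your proof is correct and takes essentially the same route as the paper's: both obtain a nonzero morphism $\P_x\rarrow\C$ from the generating property in Lemma~\ref{local-proj-contramodules-generate-lem} and then extract a simple quotient of its image from the structure of $\P_x$ as a projective cover of a simple contramodule. The only cosmetic difference is that the paper invokes the unique maximal subcontramodule $H(\R)\P_x$ while you use superfluousness of $\ker(p_x)$ --- the same fact in a slightly different guise.
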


\begin{proof}
By the previous lemma, there is a nonzero homomorphism $f\:\P_x\rarrow\C$ for some $x\in X$. Since $\P_x$ has a unique maximal subcontramodule $H(\R)\P_x$ (see again the proof of Lemma~\ref{simple-contramodules-projective-covers}), so has it $f(\P_x)\simeq\P_x/\ker(f)$.
\end{proof}

Now we reach our main goal of the section and prove the existence of projective covers of lattice-finite contramodules.

\begin{prop} \label{lattice-finite-contramodules-projective-covers}
Let\/ $\R$ be a topologically semiperfect topological ring.
Then each lattice-finite left\/ $\R$\+contramodule has a projective cover in both the abelian categories\/ $\R\Contra$ and $\R\Modl$.
\end{prop}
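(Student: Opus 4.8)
The plan is to mimic the classical proof that over a semiperfect ring every finitely generated module has a projective cover (compare Lemma~\ref{discrete-fg-projective-covers-provisional} and~\cite[Theorem~27.6]{AF}), reducing everything to the already-settled case of simple contramodules (Lemma~\ref{simple-contramodules-projective-covers}) by passing modulo the topological Jacobson radical. Throughout I set $\HH=\HH(\R)$ and $\S=\R/\HH$; the latter is topologically semisimple by Theorem~\ref{topologically-semiperfect-structural-properties}. Let $\C$ be a lattice-finite left $\R$\+contramodule. Since $\C$ is in particular finitely generated, by Lemma~\ref{fin-gen-modules-contramodules-proj-covers-the-same} it suffices to produce a projective cover of $\C$ in $\R\Contra$; the forgetful functor will then automatically yield one in $\R\Modl$.

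First I would record a lattice-theoretic fact used several times: a quotient of a lattice-finite contramodule is lattice-finite. Indeed, if $q\:\C\rarrow\C'$ is surjective and $\C'=\sum_{x\in X}\D_x$, then $\C=\sum_{x\in X}q^{-1}(\D_x)$ because each $q^{-1}(\D_x)$ contains $\ker q$ while $q\bigl(\sum_x q^{-1}(\D_x)\bigr)=\sum_x\D_x=\C'$; so a finite subsum already equals $\C$, hence a finite subsum of the $\D_x$ equals $\C'$. Consequently $\bar\C:=\C/(\HH\tim\C)$ is lattice-finite; viewed as an $\S$\+contramodule via the contrarestriction of scalars (using $\HH\tim\bar\C=0$), it is a coproduct of simple $\S$\+contramodules since $\S\Contra$ is split, and lattice-finiteness forces this coproduct to be finite, say $\bar\C\simeq\coprod_{i=1}^n\D_i$ with each $\D_i$ simple. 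Each $\D_i$, regarded as a simple $\R$\+contramodule (Lemma~\ref{simple-contra-modules}, Proposition~\ref{simple-contramodules-killed-by-top-Jacobson}), has a projective cover $p_i\:\P_i\rarrow\D_i$ in $\R\Contra$ by Lemma~\ref{simple-contramodules-projective-covers}; from the proof of that lemma one may take $\P_i=\R e_{z_i}$ for local idempotents $e_{z_i}$, so that $\HH\tim\P_i=H(\R)e_{z_i}$ is the unique maximal subcontramodule of $\P_i$ (Corollary~\ref{local-idempotent-cor}, Proposition~\ref{local-idempotent-times-Jacobson-radicals-agree}). Then $\P:=\coprod_{i=1}^n\P_i$ is finitely generated projective, $\HH\tim\P=\coprod_{i=1}^n\HH\tim\P_i$ equals $H(\R)\P$, the radical of the underlying finitely generated $\R$\+module of $\P$ (hence a superfluous subcontramodule, as a subcontramodule superfluous in the underlying module is superfluous), and $\bar p:=\coprod_{i=1}^n p_i\:\P\rarrow\bar\C$ is a projective cover with kernel $\HH\tim\P$.

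Using projectivity of $\P$ I would lift $\bar p$ along the quotient map $\C\rarrow\bar\C$ to a morphism $\pi\:\P\rarrow\C$ with $(\C\rarrow\bar\C)\circ\pi=\bar p$. The decisive point — and the one I expect to be the main obstacle — is that $\pi$ is surjective, i.e.\ a Nakayama-type statement: the image $\pi(\P)$ satisfies $\pi(\P)+\HH\tim\C=\C$ (as $\bar p$ is onto), so $\C/\pi(\P)=\HH\tim(\C/\pi(\P))$, and one must deduce $\C/\pi(\P)=0$. This would fail for a general finitely generated contramodule (the topological Jacobson radical of a topologically semiperfect ring need not be topologically T\+nilpotent), but it holds here because $\C/\pi(\P)$ is lattice-finite: in a nonzero lattice-finite contramodule the join of any chain of proper subcontramodules is again proper (otherwise, by lattice-finiteness, a finite — hence, in a chain, a single — member of the chain would already be everything), so Zorn's lemma provides a maximal subcontramodule and therefore a simple quotient, which is annihilated by $\HH\tim{-}$ by Proposition~\ref{simple-contramodules-killed-by-top-Jacobson}; thus $\HH\tim(\C/\pi(\P))$ is contained in a proper subcontramodule, forcing $\C/\pi(\P)=0$. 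Finally $\ker\pi\subseteq\ker\bar p=\HH\tim\P$ is superfluous in $\P$, so the surjection $\pi$ from the projective contramodule $\P$ is a projective cover of $\C$ in $\R\Contra$, and Lemma~\ref{fin-gen-modules-contramodules-proj-covers-the-same} transports it to $\R\Modl$.
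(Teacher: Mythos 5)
Your proof is correct and follows the same overall strategy as the paper's: pass to the semisimple quotient $\C/(\HH\tim\C)$, cover it by a finite coproduct of projective covers of simple contramodules, lift along $\C\rarrow\C/(\HH\tim\C)$, and verify surjectivity and superfluousness of the kernel. Two steps are done by genuinely different means, though. First, you obtain the finite semisimple decomposition of $\C/(\HH\tim\C)$ by showing that quotients of lattice-finite contramodules are lattice-finite and then invoking splitness of $\S\Contra$; the paper instead starts from Lemma~\ref{local-proj-contramodules-generate-lem}, so that lattice-finiteness of $\C$ directly yields a surjection $\coprod_{y\in Y}\P_y\rarrow\C$ with $Y$ finite, from which semisimplicity and finite length of $\C/\HH\C$ follow. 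Second --- the more substantive divergence --- your Nakayama step is contramodule-internal: Zorn's lemma on the lattice of subcontramodules of the lattice-finite quotient $\C/\pi(\P)$ produces a maximal subcontramodule, hence a simple quotient, and Proposition~\ref{simple-contramodules-killed-by-top-Jacobson} finishes the job. The paper instead reduces to classical module theory: the finite surjection $\coprod_{y}\P_y\rarrow\C$ gives $\HH\C=H(\R)\C$, which lies in the radical of the finitely generated underlying $\R$\+module of $\C$ and is therefore superfluous, so surjectivity of the lift follows from ordinary Nakayama exactly as in Lemma~\ref{discrete-fg-projective-covers-provisional}. Your variant is self-contained on the contramodule side and makes visible precisely where lattice-finiteness (rather than mere finite generation) enters; the paper's variant is shorter because it recycles the module-theoretic argument already written for discrete modules. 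Both are valid, and your reduction to $\R\Contra$ via Lemma~\ref{fin-gen-modules-contramodules-proj-covers-the-same} at the start and end is exactly how the paper transfers the statement between $\R\Contra$ and $\R\Modl$.
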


\begin{proof}
 The proof is analogous to the one for
Lemma~\ref{discrete-fg-projective-covers-provisional}.
Given $\C\in\R\Contra$, Lemma~\ref{local-proj-contramodules-generate-lem} yields a surjective homomorphism
$\coprod_{y\in Y}\P_y\rarrow\C$ in $\R\Contra$, where $Y$ is some indexing set and each $\P_y$ is a projective cover of a simple contramodule.
Suppose now that $\C$ is lattice-finite; then we find a surjective
homomorphism as above, but with $Y$ finite (as $\im(f)=\sum_{y\in Y}f(\P_y)$).
The induced homomorphism $\coprod_{y\in Y}\P_y/\HH\P_y\rarrow\C/\HH\C$, where $\HH\subset\R$ is the topological Jacobson radical, is also surjective.
Since $\coprod_{y\in Y}\P_y/\HH\P_y$ is a semisimple contramodule, so is $\C/\HH\C$.
Then, as finite direct sums of projective covers of modules are again projective covers, $\C/\HH\C$ has a projective cover $p\:\Q\rarrow\C/\HH\C$ as a left $\R$\+module by Lemma~\ref{simple-contramodules-projective-covers}, but then also as a contramodule
by Lemma~\ref{fin-gen-modules-contramodules-proj-covers-the-same}.
Finally, we lift $p$ to a morphism of contramodules $f\:\Q\rarrow\C$ using the projectivity of $\Q$ and prove exactly as in
Lemma~\ref{discrete-fg-projective-covers-provisional}
(using also the equality $\ker(p)=\HH\Q=H(\R)\Q$ given by Proposition~\ref{local-idempotent-times-Jacobson-radicals-agree}(b)) that $f$ is also a projective cover of left $\R$\+modules.
One further application of Lemma~\ref{fin-gen-modules-contramodules-proj-covers-the-same} tells us that $f$ is a projective cover of contramodules.
\end{proof}

We conclude the section by drawing consequences about the structure of lattice-finite contramodules. Given a contramodule $\C$, we denote by $\rad(\C)$ the intersection of all maximal subcontramodules of $\C$.

\begin{prop} \label{structure-lattice-finite-contra-prop}
Let\/ $\R$ be a topologically semiperfect topological ring and $\HH$ be the topological Jacobson radical.
\begin{enumerate}
\renewcommand{\theenumi}{\alph{enumi}}
\item A contramodule $\C\in\R\Contra$ is lattice-finite if and only if it is a quotient of a finite direct sum $\coprod_{i=1}^n \P_i$, where each $\P_i$ is a projective cover of a simple contramodule (i.e.\ $\P_i\simeq\R e_i$ for a local idempotent $e_i\in\R$).
\item If $\C$ is a lattice-finite left $\R$\+contramodule, then $\rad(\C)=\HH\C=\HH\tim\C$ and $\C/\rad(\C)$ is a semisimple contramodule.
\end{enumerate}
\end{prop}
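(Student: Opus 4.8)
The plan is to establish part~(a) first and then derive part~(b) by transcribing the proof of Proposition~\ref{structure-fg-discrete-prop} into the contramodule setting. Two elementary observations about lattice-finiteness are needed at the outset. First, a quotient of a lattice-finite contramodule is lattice-finite: given a surjection $f\:\D\rarrow\C$ in $\R\Contra$ and subcontramodules $\C_x\subset\C$ with $\sum_{x}\C_x=\C$, the preimages $\D_x=f^{-1}(\C_x)$ are subcontramodules whose join equals $\D$ (it contains $\ker f$ and is carried onto $\C$), so a finite subfamily already joins to $\D$ and the corresponding $\C_x$ join to~$\C$. Second, if $\mathfrak n\subset\D$ is a subcontramodule which is superfluous in the underlying $\R$\+module of $\D$ and the quotient $\D/\mathfrak n$ is lattice-finite, then $\D$ is lattice-finite: reducing an equality $\sum_x\D_x=\D$ modulo~$\mathfrak n$ and using lattice-finiteness of $\D/\mathfrak n$ yields a finite $F$ with $\mathfrak n+\sum_{x\in F}\D_x=\D$, and since $\sum_{x\in F}\D_x$ is a subcontramodule (a finite join coincides with the sum of the underlying submodules, cf.\ Subsection~\ref{finiteness-subsecn}) and $\mathfrak n$ is superfluous, one gets $\sum_{x\in F}\D_x=\D$.

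For the ``if'' direction of~(a), suppose $\C$ is a quotient of $\P=\coprod_{i=1}^n\P_i$ with $\P_i\simeq\R e_i$ for local idempotents $e_i$. Each $\P_i$ is a projective cover of a simple contramodule with unique maximal subcontramodule $\rad(\P_i)=H(\R)e_i=\HH e_i$, which is superfluous in the underlying $\R$\+module (the final assertion of Corollary~\ref{local-idempotent-cor} together with Proposition~\ref{local-idempotent-times-Jacobson-radicals-agree}(b), superfluousness being established as in the proof of Lemma~\ref{lattice-finite-proj-lem}), and with simple quotient $\P_i/\rad(\P_i)\simeq\R e_i/H(\R)e_i$ (Corollary~\ref{local-idempotent-cor}(5) and Lemma~\ref{simple-contra-modules}). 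Hence $\mathfrak n=\bigoplus_{i=1}^n\rad(\P_i)$ is superfluous in $\P$ and $\P/\mathfrak n\simeq\bigoplus_{i=1}^n\P_i/\rad(\P_i)$ is a semisimple contramodule of finite length, in particular lattice-finite; by the two observations above, $\P$, and therefore its quotient $\C$, is lattice-finite. (This is essentially the computation in the proof of Lemma~\ref{lattice-finite-proj-lem}, which does not really use orthogonality of the $e_i$.) For the ``only if'' direction, Lemma~\ref{local-proj-contramodules-generate-lem} supplies a surjection $\coprod_{y\in Y}\P_y\rarrow\C$ with each $\P_y$ a projective cover of a simple contramodule; since the image of this map is the join $\sum_{y}f(\P_y)$ and $\C$ is lattice-finite, the restriction to $\coprod_{y\in Y_0}\P_y$ is already surjective for some finite $Y_0\subset Y$, and each such $\P_y$ has the form $\R e_y$ for a local idempotent $e_y$ by the proofs of Lemmas~\ref{simple-contramodules-projective-covers} and~\ref{local-proj-contramodules-generate-lem} (via Corollary~\ref{local-idempotent-cor}).

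For part~(b), let $\C$ be lattice-finite, hence finitely generated (both as a contramodule and, by Lemma~\ref{finitely-generated-contramodules-lemma}(a), as an $\R$\+module), so $\HH\tim\C=\HH\C$ by Lemma~\ref{finitely-generated-contramodules-lemma}(b). Using part~(a), fix a surjection $f\:\P=\coprod_{i=1}^n\R e_i\rarrow\C$. As in the proof of Lemma~\ref{lattice-finite-proj-lem} and by Proposition~\ref{local-idempotent-times-Jacobson-radicals-agree}(b) one has $\HH\P=H(\R)\P$, so $\HH\C=f(\HH\P)=H(\R)\C$, and the quotient $\P/\HH\P\simeq\bigoplus_{i=1}^n\R e_i/H(\R)e_i$ is a finite-length semisimple contramodule; consequently its quotient $\C/\HH\C$ is semisimple as well. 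If $\C\ne0$ then $\C/\HH\C\ne0$, since $\HH\C=H(\R)\C=\C$ would force $\C=0$ by Nakayama's lemma applied to the finitely generated $\R$\+module~$\C$; therefore $\HH\C$ is an intersection of maximal subcontramodules of $\C$, whence $\rad(\C)\subset\HH\C$. Conversely, for any maximal subcontramodule $N\subset\C$ the quotient $\C/N$ is a simple contramodule, so $\HH\tim(\C/N)=0$ by Proposition~\ref{simple-contramodules-killed-by-top-Jacobson}; since the image of $\HH\tim\C$ in $\C/N$ is $\HH\tim(\C/N)$, this gives $\HH\C=\HH\tim\C\subset N$, and hence $\HH\C\subset\rad(\C)$. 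Thus $\rad(\C)=\HH\C=\HH\tim\C$ and $\C/\rad(\C)=\C/\HH\C$ is semisimple; the case $\C=0$ is trivial.

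The only real work sits in part~(a), namely verifying that the finite coproduct $\P=\coprod_{i=1}^n\R e_i$ is lattice-finite for not necessarily orthogonal local idempotents~$e_i$; the mechanism is the reduction modulo the superfluous subcontramodule $\HH\P$ to the obviously lattice-finite finite-length quotient, and the mild subtlety is that joins of subcontramodules coincide with sums of underlying submodules only for finite families, which is exactly why superfluousness of $\HH\P$ is invoked to pass back from $\P/\HH\P$ to~$\P$. Part~(b) is then a routine adaptation of Proposition~\ref{structure-fg-discrete-prop}, with Proposition~\ref{simple-contramodules-killed-by-top-Jacobson} and Lemma~\ref{finitely-generated-contramodules-lemma}(b) standing in for the elementary module-theoretic facts used there.
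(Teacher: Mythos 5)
Your proof is correct and follows essentially the same route as the paper: part~(a) via lattice-finiteness of $\coprod_{i=1}^n\R e_i$ (reduction modulo the superfluous subcontramodule $H(\R)\P=\HH\P$ to the finite-length semisimple quotient) plus Lemma~\ref{local-proj-contramodules-generate-lem} for the converse, and part~(b) via Lemma~\ref{finitely-generated-contramodules-lemma}(b), the semisimplicity of $\C/\HH\C$, and Proposition~\ref{simple-contramodules-killed-by-top-Jacobson}. You additionally spell out two points the paper leaves implicit, namely that quotients of lattice-finite contramodules are lattice-finite and that the argument of Lemma~\ref{lattice-finite-proj-lem} does not need the local idempotents $e_i$ to be pairwise orthogonal; both verifications are accurate.
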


\begin{proof}
(a) Each contramodule of the form $\coprod_{i=1}^n \P_i$ is lattice-finite by Lemma~\ref{lattice-finite-proj-lem} and so is every quotient.
If, on the other hand, $\C$ is lattice-finite, it has a projective cover of the form $\coprod_{i=1}^n \P_i\rarrow\C$ by the proof of Proposition~\ref{lattice-finite-contramodules-projective-covers}.

(b) As any lattice-finite contramodule $\C$ is finitely generated, we have $\HH\C=\HH\tim\C$ by Lemma~\ref{finitely-generated-contramodules-lemma}(b).
The quotient $\C/\HH\C$ is semisimple by the proof of Proposition~\ref{lattice-finite-contramodules-projective-covers}, so $\rad(\C)\subset\HH\C$.
On the other hand, if $\D\subset\C$ is any maximal subcontramodule, then $\HH\tim(\C/\D)=0$ by Proposition~\ref{simple-contramodules-killed-by-top-Jacobson}, so $\HH\tim\C\subset\D$. In particular, $\HH\tim\C\subset\rad(\C)$.
\end{proof}

\Section{Lifting Idempotents modulo the Topological Jacobson Radical}
\label{lifting-idempotents-secn}

In this section, we will discuss results on lifting idempotent elements 
modulo the topological Jacobson radical. We start with an easy consequence
of Proposition~\ref{discrete-fg-projective-covers}.

\begin{prop} \label{lifting-orthogonal-primitive-idempotents}
 Let\/ $\R$ be a topologically semiperfect topological ring,
$\HH=\HH(\R)$ be its topological Jacobson radical, and\/ $\S=\R/\HH$
be the related topological quotient ring.
 Then any finite orthogonal family of primitive idempotents in\/ $\S$
can be lifted modulo\/ $\HH$ to a finite orthogonal family of
local idempotents in\/~$\R$.
\end{prop}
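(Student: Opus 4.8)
The plan is to reduce, by means of projective covers, to the problem of lifting a \emph{single} idempotent whose associated module is semisimple of finite length, and then to rectify a lift of the whole family inside a corner ring using classical semiperfect ring theory.

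First I would lift the idempotent $g := g_1 + \dots + g_n \in \S$. By Lemma~\ref{idempotents-in-topologically-semisimple-rings} each $g_i\S$ is a simple discrete right $\S$-module, so $g\S = \bigoplus_{i=1}^n g_i\S$ is a finite-length (in particular finitely generated) discrete right $\S$-module and, via the surjection $\pi\colon\R\to\S$, a finitely generated discrete right $\R$-module. The $\R$-module homomorphism $q\colon\R\to g\S$, $r\mapsto g\pi(r)$, is surjective, hence a projective precover in $\Modr\R$, while $g\S$ has a projective cover in $\Modr\R$ by Proposition~\ref{discrete-fg-projective-covers}. Splitting the precover off the cover produces an idempotent $e\in\R$ with $\R = e\R\oplus(1-e)\R$ such that $q|_{e\R}\colon e\R\to g\S$ is a projective cover and $(1-e)\R\subseteq\ker q$. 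Since $g\S$ is semisimple, the kernel of this cover equals $\rad(e\R) = eH(\R)$; as it also equals $\ker(\pi|_{e\R}) = e\R\cap\HH = e\HH\supseteq eH(\R)$, we get $e\HH = eH(\R)$ and see that the cover is simply $\pi|_{e\R}\colon e\R\to\bar e\S$ with $\bar e := \pi(e)$. Now $(1-e)\R\subseteq\ker q$ forces $g\bar e = g$, whence $g\S\subseteq\bar e\S$; since $\bar e\S\cong g\S$ is semisimple of the same finite length, $g\S = \bar e\S$, and then $(1-\bar e)\S\subseteq(1-g)\S$ with both sides complements of $g\S$ in $\S$, so $(1-\bar e)\S = (1-g)\S$. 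Uniqueness of the decomposition of $1$ relative to $\S = g\S\oplus(1-g)\S$ now forces $\bar e = g$, so $e$ is an idempotent of $\R$ lifting $g$.

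Next I would rectify the family. By construction $e\R$ is a finite direct sum of projective covers of simple discrete modules, each with local endomorphism ring (Lemma~\ref{projective-cover-of-simple}), so the corner ring $e\R e$ is a (discrete) semiperfect ring (Lemma~\ref{endomorphisms-discrete-semiperfect}). Write its unit as $e = \tilde e_1 + \dots + \tilde e_n$ with the $\tilde e_i$ orthogonal primitive, hence local, idempotents; these are local idempotents of $\R$, and $\hat g_i := \pi(\tilde e_i)$ form a complete orthogonal family of primitive idempotents of the semisimple Artinian ring $g\S g$ with $\sum_i\hat g_i = g = \sum_i g_i$. Since $e\R/\rad(e\R) = \bar e\S = g\S$, the multiset of isomorphism types of the simple $\S$-modules $\hat g_i\S$ coincides with that of the $g_i\S$, so after reindexing the two complete orthogonal families $(\hat g_i)$ and $(g_i)$ of primitive idempotents of the semiperfect ring $g\S g$ are conjugate: $w\hat g_i w^{-1} = g_i$ for some unit $w$ of $g\S g$. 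Lift $w$ and $w^{-1}$ to $v, v'\in e\R e$ (possible, as $\pi$ restricts to a surjection $e\R e\to g\S g$). Then $vv'-e,\ v'v-e\in e\R e\cap\HH = e\HH e$, and since $e$ is a finite sum of local idempotents, Proposition~\ref{local-idempotent-times-Jacobson-radicals-agree} gives $e\HH e = eH(\R)e = H(e\R e)$, so $v$ is a unit of $e\R e$. Finally $e_i := v\tilde e_i v^{-1}$ are orthogonal local idempotents of $\R$ with $\pi(e_i) = w\hat g_i w^{-1} = g_i$, completing the argument.

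The delicate point is the last part of the first step: the right-module projective cover of $g\S$ pins down a lift of $g$ only up to the choice of a complement of $g\S$ in $\S$, i.e.\ only up to conjugacy, and one must exploit that the \emph{particular} precover $q$ kills $(1-e)\R$ in order to force the complement $(1-\bar e)\S$ to equal $(1-g)\S$ and thereby obtain an exact lift. A secondary subtlety is the identity $e\HH e = eH(\R)e$ in the second step, which uses that the idempotent $e$ produced above — unlike an arbitrary idempotent — is a finite sum of local idempotents, so that Proposition~\ref{local-idempotent-times-Jacobson-radicals-agree} applies term by term. (Alternatively one could proceed by induction on $n$, lifting a single primitive idempotent $e_1$ and passing to the corner ring $(1-e_1)\R(1-e_1)$, which is again topologically semiperfect; but that route requires the — less transparent — stability of topological semiperfectness under corner rings.)
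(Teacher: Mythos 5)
There is a genuine gap in the first step, at the point where you conclude $\bar e=g$. From $(1-e)\R\subseteq\ker q$ you correctly deduce $g\bar e=g$, but the inference ``whence $g\S\subseteq\bar e\S$'' is backwards: $g=g\bar e$ says $g\in\S\bar e$, which yields the containment of \emph{left} ideals $\S g\subseteq\S\bar e$, not of right ideals. In fact $\bar e=g$ simply fails for the construction as you describe it. Take $\R=\S=M_2(k)$ with the discrete topology (so $\HH=0$), $g=E_{11}$, and $e=E_{11}+cE_{21}$ with $c\ne0$. Then $e$ is idempotent, $e\R=\{(a,b;ca,cb)\}$, so $q|_{e\R}\:e\R\rarrow g\S$ is an isomorphism (hence a projective cover), and $(1-e)\R\subseteq(1-g)\S=\ker q$; yet $e\ne g$ and $g\S\not\subseteq e\S$. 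Splitting the precover $q$ off the cover constrains only the projection $\R\rarrow P$; it leaves the embedding $P\rarrow\R$ free up to exactly the ambiguity you flag as the delicate point, and the condition $(1-e)\R\subseteq\ker q$ does not remove it. The error propagates into step~2, since the conjugation there takes place in the corner $e\R e$, whose image in $\S$ is $\bar e\S\bar e$ rather than $g\S g$; nor can it be repaired by a final conjugation in $\S$, because a unit of $\S$ conjugating $\bar e$ to $g$ need not lift to a unit of $\R$ (units do not lift modulo $\HH$ in general --- cf.\ Example~\ref{topological-jacobson-orthogonalization-counterex}(1), where $1-h$ with $h\in\HH$ fails to be invertible).

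The missing ingredient is a constraint on the embedding $e\R\hookrightarrow\R$ as well: one must arrange $\pi(e\R)\subseteq g\S$, i.e.\ $g\bar e=\bar e$, which together with $g\bar e=g$ does force $\bar e=g$. This is exactly what the paper's proof supplies with its second commutative square: it realizes $\bigoplus_{w}P_w$ as a direct summand of $\R$ using \emph{two} lifted maps, one compatible with the split epimorphism $\S\rarrow\bigoplus_w g'_w\S$, \ $s\mapsto(g'_ws)_w$, and one compatible with the split monomorphism $\bigoplus_w g'_w\S\rarrow\S$. The two compatibilities combine to give $\pi(e'_wr)=g'_w\pi(r)$ for the projector $e'_w$ onto $P_w$, whence $\pi(e'_w)=g'_w$ for each $w$ separately, with orthogonality for free. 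Once your step~1 is corrected along these lines (keeping the individual covers $P_w$ rather than only their sum $g\S$), the entire rectification argument of your step~2 --- which is otherwise essentially sound --- becomes unnecessary.
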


\begin{proof}
 Recall that, by
Theorem~\ref{topologically-semiperfect-structural-properties},
the topological ring $\S$ is topologically semisimple.
 Let $(g'_w)_{w\in W}$ be a finite orthogonal family of primitive
idempotents in~$\S$.
 Then, by Lemma~\ref{idempotents-in-topologically-semisimple-rings}(3)
the right $\S$\+module
$g'_w\S$ is discrete and simple for every $w\in W$.
 By Proposition~\ref{discrete-fg-projective-covers},
the discrete and simple right $\R$\+module $g'_w\S$ has a projective cover
$p_w\:P_w\rarrow g'_w\S$ in $\Modr\R$.

 By~\cite[Remark~1.4.2]{Xu}, a finite direct sum of covers is a cover;
so the morphism
$$
 \bigoplus\nolimits_{w\in W}p_w\:\bigoplus\nolimits_{w\in W}P_w
 \lrarrow\bigoplus\nolimits_{w\in W}g'_w\S
$$
is a projective cover in $\Modr\R$.
 Now we have a natural split epimorphism of right $\S$\+modules
$\S\rarrow\bigoplus_{w\in W}g'_w\S$ given by the formula
$s\longmapsto (g'_ws)_{w\in W}$ for all $s\in\S$.
 The composition $\R\rarrow\S\rarrow\bigoplus_{w\in W}g'_w\S$ is
a projective precover in $\Modr\R$.
 Hence the direct sum of $\R$\+modules $\bigoplus_{w\in W}P_w$
is a direct summand of the free right $\R$\+module~$\R$.

 More precisely, due to the projective precover properties of
the surjective morphisms of right $\R$\+modules $\bigoplus_{w\in W}P_w
\rarrow\bigoplus_{w\in W}g'_w\S$ and $\R\rarrow\S$, there exist
dotted arrows making the following two squares commutative
\begin{equation} \label{eq-lifting-idempotents}
\begin{gathered}
 \xymatrix{
  \bigoplus\nolimits_{w\in W}P_w \ar@{->>}[r]
  & \bigoplus\nolimits_{w\in W}g'_w\S \\
  \R \ar@{->>}[r] \ar@{..>>}[u] & \S \ar@{->>}[u]
 } 
 \qquad
 \xymatrix{
    \bigoplus\nolimits_{w\in W}P_w \ar@{->>}[r] \ar@{>..>}[d]
    & \bigoplus\nolimits_{w\in W}g'_w\S \ar@{>->}[d] \\
  \R \ar@{->>}[r] & \S
 }
\end{gathered}
\end{equation}
where the split monomorphism of right $\S$\+modules
$\bigoplus_{w\in W}g'_w\S\rarrow\S$ is given by the obvious
rule $(s_w)_{w\in W}\longmapsto\sum_{w\in W}s_w$ for all
$s_w\in g'_w\S$.
 The composition $\bigoplus_{w\in W}g'_w\S\rarrow\S\rarrow
\bigoplus_{w\in W}g'_w\S$ is the identity map.

 Since $\bigoplus_{w\in W}P_w\rarrow\bigoplus_{w\in W}g'_w\S$ is
a cover, the composition $\phi\:\bigoplus_{w\in W}P_w\rarrow\R\rarrow
\bigoplus_{w\in W}P_w$ is invertible.
 Replacing the morphism $\R\rarrow\bigoplus_{w\in W}P_w$ by its
composition with~$\phi^{-1}$ does not disturb commutativity of
the leftmost square diagram, but makes the new composition
$\bigoplus_{w\in W}P_w\rarrow\R\rarrow\bigoplus_{w\in W}P_w$
equal to the identity map.
 We will use this specific way to view the right $\R$\+module
$\bigoplus_{w\in W}P_w$ as a direct summand of the right
$\R$\+module~$\R$.

 Now the projectors $\R\rarrow P_w\rarrow\R$ of the right
$\R$\+module $\R$ onto its direct summands $P_w$ provide
the desired finite family of orthogonal idempotents
$(e'_w\in\R)_{w\in W}$ lifting the idempotents $g'_w\in\S$.
 Finally, the idempotents $e'_w\in\R$ are local by
Corollary~\ref{local-idempotent-cor}(2), since the right $\R$\+module
$P_w=e'_w\R$ is a projective cover of the simple right
$\R$\+module~$g'_w\S$.
\end{proof}

With a little more work, we can refine the previous lifting result
as follows.

\begin{lem} \label{strong-lifting-of-idempotents}
 Let\/ $\R$ be a topologically semiperfect topological ring,
$\HH=\HH(\R)$ be its topological Jacobson radical, and\/ $\S=\R/\HH$
be the related topological quotient ring.
Given finitely many elements $(f'_w)_{w\in W}$ in\/ $\R$ such that
$g'_w:=f'_w+\HH$ form an orthogonal family of primitive idempotents
in\/ $\S$, then there exists an orthogonal family of local idempotents
$(e'_w)_{w\in W}$ in\/ $\R$ such that $e'_w\in f'_w\R$ and
$e'_w+\HH=g'_w$ for each $w\in W$. 
\end{lem}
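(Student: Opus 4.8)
The plan is to refine the proof of Proposition~\ref{lifting-orthogonal-primitive-idempotents} so that the projective direct summands we split off from $\R$ are built \emph{inside} the prescribed right ideals $f'_w\R$. By Theorem~\ref{topologically-semiperfect-structural-properties} the topological quotient ring $\S$ is topologically semisimple, so each $g'_w\S$ is a simple discrete right $\S$\+module by Lemma~\ref{idempotents-in-topologically-semisimple-rings}(3), hence a simple discrete right $\R$\+module, and by Proposition~\ref{discrete-fg-projective-covers} it admits a projective cover $p_w\colon P_w\rarrow g'_w\S$ in $\Modr\R$; then $\bigoplus_{w\in W}p_w$ is a projective cover of $\bigoplus_{w\in W}g'_w\S$, since a finite direct sum of covers is a cover.

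The new ingredient is that, because $g'_w=f'_w+\HH$ is idempotent, the quotient map $q\colon\R\rarrow\S$ restricts to a \emph{surjection} of right $\R$\+modules $\rho_w\colon f'_w\R\rarrow g'_w\S$, \,$f'_wr\mapsto g'_w\overline r$. As $P_w$ is projective, $p_w$ lifts through $\rho_w$ to a homomorphism $P_w\rarrow f'_w\R$; composing with $f'_w\R\hookrightarrow\R$ we obtain $\iota_w\colon P_w\rarrow\R$ whose image lies in $f'_w\R$ and which satisfies $q\iota_w=(g'_w\S\hookrightarrow\S)\circ p_w$. Assembling these yields $\iota\colon\bigoplus_w P_w\rarrow\R$ with $q\iota=\bigl(\bigoplus_w g'_w\S\hookrightarrow\S\bigr)\circ\bigl(\bigoplus_w p_w\bigr)$, where the left map is the split monomorphism $(s_w)_w\mapsto\sum_w s_w$. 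On the other side, the map $\R\rarrow\bigoplus_w g'_w\S$, \,$r\mapsto(g'_w\overline r)_w$, is a surjection from a projective module, hence a projective precover; lifting it through the cover $\bigoplus_w p_w$ gives $\beta\colon\R\rarrow\bigoplus_w P_w$ with $\bigl(\bigoplus_w p_w\bigr)\beta=\bigl(r\mapsto(g'_w\overline r)_w\bigr)$. Then $\phi:=\beta\iota$ satisfies $\bigl(\bigoplus_w p_w\bigr)\phi=\bigoplus_w p_w$, so $\phi$ is an automorphism by the uniqueness property of projective covers; replacing $\beta$ by $\phi^{-1}\beta$ (which does not disturb the identity $\bigl(\bigoplus_w p_w\bigr)\beta=(r\mapsto(g'_w\overline r)_w)$) we may assume $\beta\iota=\id$.

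Writing $\beta=(\beta_w)_w$ with $\beta_w\colon\R\rarrow P_w$, the relations $\beta_w\iota_v=\delta_{wv}\,\id_{P_w}$ show that the endomorphisms $\iota_w\beta_w$ of the right $\R$\+module $\R$ are pairwise orthogonal idempotents; being given by left multiplication, they define pairwise orthogonal idempotents $e'_w\in\R$. By construction $e'_w\R=\iota_w(P_w)\subseteq f'_w\R$, so $e'_w\in f'_w\R$; since $\iota_w$ is a split monomorphism, $e'_w\R\simeq P_w$ is a projective cover of a simple module, whence $e'_w\R e'_w$ is local by Corollary~\ref{local-idempotent-cor}, i.e.\ $e'_w$ is a local idempotent; and combining $q\iota_w=(g'_w\S\hookrightarrow\S)\circ p_w$ with $p_w\beta_w=(r\mapsto g'_w\overline r)$ gives $q(e'_wr)=g'_w\overline r$ for all $r\in\R$, so in particular $e'_w+\HH=q(e'_w)=g'_w$.

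The point requiring care is to secure \emph{all three} conditions at once: (a) $e'_w\in f'_w\R$, (b) mutual orthogonality of the $e'_w$, and (c) the exact equality $e'_w+\HH=g'_w$ — where (c) genuinely does not follow from the weaker statement $(e'_w+\HH)\S=g'_w\S$, since an idempotent generating the same right ideal as $g'_w$ need not coincide with $g'_w$ (e.g.\ in a matrix ring). Conditions (b) and (c) are exactly what the two squares in the proof of Proposition~\ref{lifting-orthogonal-primitive-idempotents} are designed to deliver, while (a) is gained at essentially no extra cost by routing the ``inclusion side'' of the argument, namely each $\iota_w$, through $f'_w\R$ — this is possible precisely because $\rho_w$ is surjective onto $g'_w\S$ and $P_w$ is projective. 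The two modifications are compatible because they concern only the single map $\iota$; all remaining verifications are routine diagram chases of the same kind as in the earlier proof.
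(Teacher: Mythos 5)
Your proof is correct and follows essentially the same route as the paper: the key point in both is that, since $g'_w$ is idempotent, the restriction $f'_w\R\rarrow g'_w\S$ of the quotient map is surjective, so projectivity of $P_w$ lets you choose the lift $P_w\rarrow\R$ with image inside $f'_w\R$, after which the orthogonalization and the identity $e'_w+\HH=g'_w$ come out of the same diagram chase as in Proposition~\ref{lifting-orthogonal-primitive-idempotents}. The only difference is that you spell out the details that the paper compresses into ``from this point on, we can continue as in the proof of Proposition~\ref{lifting-orthogonal-primitive-idempotents}.''
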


\begin{proof}
The trick is that we can choose the dotted morphism in the rightmost
square of~\eqref{eq-lifting-idempotents} in such a way that
$\im(P_w\to\R)\subset f'_w\R$. Indeed, the image of the composition
$P_w\rarrow g'_w\S\rarrow \S$ equals $g'_w\S$, so it factors through
the epimorphism $f'_w\R\rarrow g'_w\S$.

From this point on, we can continue as in the proof of
Proposition~\ref{lifting-orthogonal-primitive-idempotents} and
the constructed projectors $\R\rarrow P_w\rarrow\R$ will have image
contained in $f'_w\R$. Hence, the lifted idempotents will satisfy
$e'_w\in f'_w\R$.
\end{proof}

Using the latter lemma, we can lift convergent infinite families of
primitive idempotents. Note, however, that the orthogonality is not
under control here.

\begin{thm}\label{lifting-convergent-families-of-idempotents}
Let\/ $\R$ be a topologically semiperfect topological ring,
$\HH=\HH(\R)$ be its topological Jacobson radical, $\S=\R/\HH$
the related topological quotient ring,
and let\/ $\mathbf g'=(g'_z)_{z\in Z}\in\S[[Z]]$ be a zero-convergent
family of primitive idempotents.
Then there exists a zero-convergent family\/
$\mathbf e'=(e'_z)_{z\in Z}\in\R[[Z]]$
of local idempotents which lifts\/~$\mathbf g'$ modulo\/~$\HH$.
\end{thm}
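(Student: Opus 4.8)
The plan is in two steps: first lift $\mathbf{g}'$ to an \emph{arbitrary} zero-convergent family in $\R$, and then replace each term by a local idempotent using Lemma~\ref{strong-lifting-of-idempotents} applied to one-element families.

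First, by Theorem~\ref{topologically-semiperfect-structural-properties}(a) the topological Jacobson radical $\HH$ is strongly closed in $\R$; by the definition recalled in Subsection~\ref{top-ss-and-perfect-subsecn}, this means precisely that the natural map $\R[[Z]]\rarrow\S[[Z]]$ is surjective. So we may pick a zero-convergent family $\mathbf{f}'=(f'_z)_{z\in Z}\in\R[[Z]]$ with $f'_z+\HH=g'_z$ for all $z\in Z$.

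Next, fix $z\in Z$ and apply Lemma~\ref{strong-lifting-of-idempotents} to the one-element family $\{f'_z\}$: a singleton family is vacuously orthogonal, and $g'_z$ is a primitive idempotent in $\S$ by hypothesis, so the lemma yields a local idempotent $e'_z\in f'_z\R$ with $e'_z+\HH=g'_z$. Doing this for every $z$ produces a family $\mathbf{e}'=(e'_z)_{z\in Z}$ of local idempotents lifting $\mathbf{g}'$ modulo $\HH$. It remains to see that $\mathbf{e}'$ is zero-convergent: for any open right ideal $\I\subset\R$, choose a finite $F\subset Z$ with $f'_z\in\I$ for $z\in Z\setminus F$ (possible since $\mathbf{f}'\in\R[[Z]]$); then $e'_z\in f'_z\R\subset\I$ for every $z\in Z\setminus F$, because $\I$ is a right ideal. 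Hence $\mathbf{e}'\in\R[[Z]]$, as required.

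I do not expect a serious obstacle here: all the real work sits in Lemma~\ref{strong-lifting-of-idempotents} (and, behind it, in the strong closedness of $\HH$). The only point needing a little care is the zero-convergence of the corrected family, which hinges on the containment $e'_z\in f'_z\R$ from Lemma~\ref{strong-lifting-of-idempotents}; without it, correcting the terms one at a time could a priori destroy zero-convergence. As noted in the paragraph preceding the statement, orthogonality of the $e'_z$ is genuinely not controlled, since the singleton applications of Lemma~\ref{strong-lifting-of-idempotents} at distinct indices are carried out independently.
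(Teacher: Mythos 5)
Your proposal is correct and follows exactly the paper's own argument: lift $\mathbf g'$ to a zero-convergent family $\mathbf f'\in\R[[Z]]$ using the strong closedness of $\HH$, apply Lemma~\ref{strong-lifting-of-idempotents} to each $f'_z$ individually to get a local idempotent $e'_z\in f'_z\R$, and deduce zero-convergence of $(e'_z)$ from the containment $e'_z\in f'_z\R$ together with the fact that $\R$ has a base of neighborhoods of zero consisting of right ideals. Your write-up just spells out the last step in slightly more detail than the paper does.
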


\begin{proof}
Since $\HH$ is strongly closed in $\R$ by Theorem~\ref{topologically-semiperfect-structural-properties}, we can lift $\mathbf g'$ to a zero-convergent family of elements $\mathbf f'=(f'_z)_{z\in Z}\in\R[[Z]]$.
By Lemma~\ref{strong-lifting-of-idempotents}, we can lift each primitive idempotent $g'_z=f'_z+\HH$ individually to a local idempotent
$e'_z\in f'_z\R$. Since $\R$ has a base of open neighborhoods of zero
formed by right ideals, the family $\mathbf e'=(e'_z)_{z\in Z}$ of local idempotents in $\R$ is still zero-convergent.
\end{proof}

\begin{rem}
 Let\/ $\R$ be a topologically semiperfect topological ring with
the topological Jacobson radical $\HH$ and the related quotient ring
$\S=\R/\HH$.
 It would be interesting to know whether an arbitrary zero-convergent 
orthogonal family of (primitive) idempotents in $\S$ can be lifted to
a zero-convergent orthogonal family of (local) idempotents in~$\R$.

 In particular, let us say that a zero-convergent orthogonal family is
\emph{complete} if their sum is equal to~$1$.
 Can one lift any complete zero-convergent family of primitive
idempotents in $\S$ to a complete zero-convergent family of local
idempotents in~$\R$\,?

 The problem of lifting orthogonal idempotents is discussed in
the papers~\cite{MM2,CN}, and the conclusion seems to be that it is
easier to orthogonalize lifted idempotents than to lift individual
idempotents.
 However, this heuristic may be only applicable to lifting orthogonal
idempotents \emph{modulo an ideal contained in the Jacobson radical}.

 Dealing with topologically semiperfect topological rings, one encounters
the problem that the topological Jacobson radical $\HH$ can be strictly
larger than the abstract Jacobson radical~$H$.
 The following example illustrates some of the difficulties.
\end{rem}

\begin{ex} \label{topological-jacobson-orthogonalization-counterex}
 Let $A$ be a commutative Noetherian discrete valuation domain (e.~g.,
$A=k[t]_{(t)}$ is the localization of the polynomial ring $k[t]$ over
a field~$k$ with respect to its maximal ideal $(t)\subset k[t]$).
 Let $t\in A$ be a uniformizing element.

 Consider the free $A$\+module $M=A^{(\omega)}=\bigoplus_{i=0}^\infty A$
with a countable basis $(b_i)_{i=0}^\infty$.
 Obviously, $M$ is a direct sum of $A$\+modules with local endomorphism
rings.
 By Proposition~\ref{semiperfect-decomposition}, the endomorphism ring
$\R=\End_A(M)^\rop$, endowed with the finite topology, is a topologically
semiperfect topological ring.
 The elements of the ring $\R$ are the row-finite $\omega\times\omega$
matrices with the entries in the ring~$A$.

 It is clear from the description of the topological Jacobson radical
obtained in the proof of
Theorem~\ref{topologically-semiperfect-structural-properties} that
the topological Jacobson radical $\HH=\HH(\R)$ consists of all
the matrices with the entries divisible by~$t$, or in other words, of
all the elements divisible by $t$ in $\R$, that is, $\HH=t\R$.

\smallskip
 (1)~Here is an example of an element $h\in\HH(\R)$ which does not belong
to the Jacobson radical $H(\R)$ of the ring $\R$ viewed as an abstract
ring.
 Consider the linear map $M\larrow M\,:\!h$ given by the formula
$tb_{i+1}\mapsfrom b_i$ for all $i\in\omega$.
 Then $h\in\HH$, but the map $M\larrow M\,:\!1-h$ is not invertible.
 In fact, the map $1-h$ is a locally split monomorphism of $A$\+modules
(as it should be by
Lemma~\ref{topological-Jacobson-via-locally-split-monos},
cf.~\cite[Proposition~4.1 and Lemma~4.4]{BPS}), but it is not surjective:
the cokernel of $1-h$ is isomorphic to the field of fractions
$A[t^{-1}]$ of the ring~$A$.

 Notice that, in a topological ring with a two-sided linear topology
(i.~e., a base of neighborhoods of zero consisting of two-sided ideals),
the topological Jacobson radical $\HH$ always coincides with the abstract
Jacobson radical $H$ by~\cite[Theorem~3.8(3)]{IMR} (cf.\ the discussions
in~\cite[Section~1]{Gre} and~\cite[Section~7]{Pproperf}).
 The above counterexample shows that this is not true for right linear
topological rings in general.

\smallskip
 (2)~Here is an example showing that the technology
of~\cite[Proposition~8.2]{PS3} does not resolve the problem of lifting
infinite families of orthogonal idempotents modulo~$\HH$.

 According to the discussion in the proofs of
Proposition~\ref{matrices-of-noninvertibles}
and Theorem~\ref{topologically-semiperfect-structural-properties},
the topological quotient ring $\S=\R/\HH$ is naturally isomorphic
to the endomorphism ring $\End_k(M/tM)^\rop$ of
the infinite-dimensional vector space $M/tM$ over the residue field
$k=A/tA$, with the finite topology on the endomorphism ring.
 Put $\bar b_i=b_i+tM\in M/tM$, so the elements~$\bar b_i$,
\,$i\in\omega$, form a basis of $M/tM$.
 
 Consider the complete orthogonal family of primitive idempotents
$M/tM\larrow M/tM\,:\!g_j$ in $\S$, \ $j\in\omega$, defined by
the obvious rules $\bar b_j\mapsfrom\bar b_j\,:\!g_j$ and
$0\mapsfrom\bar b_i\,:\!g_j$ for $i\ne j$.
 There is a trivial lifting of this family of idempotents to
a complete orthogonal family of local idempotents $M\larrow M\,:\!e_j$
in $\R$ defined by the similar formulas  $b_j\mapsfrom b_j\,:\!e_j$
and $0\mapsfrom b_i\,:\!e_j$ for $i\ne j$.
 So $\mathbf e=(e_j\in\R)_{j\in\omega}\in\R[[\omega]]$ is a family
of idempotents in $\R$ satisfying the conditions of
Theorem~\ref{topologically-semiperfect-rings}(3).
 But how does one arrive to such a ``good'' lifting of a family of
idempotents in the general setting, and what if one accidentally starts
with choosing a bad lifting instead?

 Here is an example of a ``bad'' lifting: let the idempotent
endomorphisms $M\larrow M\,:\!e'_j$ be given by the formulas
$b_j-tb_{j+1}\mapsfrom b_j\,:\!e'_j$ and 
$0\mapsfrom b_i\,:\!e'_j$ for $i\ne j$.
 Then $\mathbf e'=(e'_j\in\R)_{j\in\omega}\in\R[[\omega]]$ is
a zero-convergent, but \emph{nonorthogonal} family of local idempotents
in~$\R$ with $e'_j+\HH=g_j$ for all $j\in\omega$.
 One cannot orthogonalize the family~$\mathbf e'$
using~\cite[Proposition~8.2]{PS3}, because
$u=\sum_{j=0}^\infty e'_z=1-h\in1-\HH$ is not an invertible element
in~$\R$ (see the discussion in~(1)).
\end{ex}

\Section{Structure of Projective Contramodules}
\label{structure-of-projective-contramodules-secn}

We conclude the paper by the following structure theorem for projective contramodules over topologically semiperfect topological rings with a countable base of open neighborhoods of zero,
which generalizes~\cite[Theorem 27.11]{AF}.

\begin{thm} \label{projective-contramodules-for-countable-basis-thm}
Let $\R$ be a topologically semiperfect topological ring with a countable base of open neighborhoods of zero and
let $\P$ be a projective contramodule.
Then $\P$ decomposes as $\P=\coprod_{x\in X}\P_x$, where each
$\P_x$ is a projective cover of a simple contramodule
(i.e.\ $\P_x\simeq\R e_x$ for a local idempotent $e_x\in\R$).
\end{thm}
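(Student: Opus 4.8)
The plan is to reduce the statement to a Crawley--J\o nsson--Warfield type situation and then feed it into Proposition~\ref{countable-decomposition-of-idempotents-prop}. Apply that proposition to the unit of $\R$ (legitimate, since $\R$ is topologically semiperfect with a countable base, hence admits projective covers of discrete simples by Proposition~\ref{discrete-fg-projective-covers}) to obtain a zero-convergent \emph{finite or countable} complete orthogonal family $(e_z)_{z\in Z}$ of local idempotents; then $\R\simeq\coprod_{z\in Z}\R e_z$ in $\R\Contra$ by Theorem~\ref{topologically-semiperfect-rings}. Since coproducts commute with coproducts, every free contramodule $\R[[Y]]=\coprod_{y\in Y}\R$ is isomorphic to $\coprod_{(y,z)\in Y\times Z}\R e_z$, a coproduct of projective covers of simple contramodules. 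As $\P$ is a direct summand of some $\R[[Y]]$, it remains to show that a direct summand of a coproduct of copies of the $\R e_z$ is again such a coproduct.

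\emph{The countably presented case.} Thanks to the countable base hypothesis, $\R\Contra$ is locally $\aleph_1$-presentable with the free contramodules on countable sets serving as $\aleph_1$-presentable projective generators (see~\cite{Prev}); consequently an $\aleph_1$-presentable projective contramodule is exactly a direct summand of $\R[[X]]$ for a countable set $X$. Write such a $\P$ as $\P=e\cdot\R[[X]]$ for an idempotent $e$ in the topological endomorphism ring $\T=\End_{\R\Contra}(\R[[X]])^{\rop}$. By Proposition~\ref{semiperfect-decomposition} applied to the object $\R[[X]]\simeq\coprod_{(x,z)}\R e_z$ of the topologically agreeable category $\R\Contra_\proj$, the ring $\T$ is topologically semiperfect, and since $X$ and $Z$ are countable and $\R$ has a countable base, so does $\T$. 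Hence Proposition~\ref{countable-decomposition-of-idempotents-prop} yields a zero-convergent finite or countable family $(\varepsilon_k)_k$ of pairwise orthogonal local idempotents in $\T$ with $e=\sum_k\varepsilon_k$, and arguing as in the implication (3)$\Rightarrow$(1) of Theorem~\ref{topologically-semiperfect-rings}, now for the object $e\cdot\R[[X]]$ in $\R\Contra_\proj$ (cf.\ \cite[Lemma~10.10]{PS3}), gives $\P\simeq\coprod_k\Q_k$ in $\R\Contra$ with $\Q_k=\varepsilon_k\cdot\R[[X]]$. Each $\Q_k$ is an indecomposable projective contramodule (its endomorphism ring $\End_{\R\Contra}(\Q_k)$ is local) which is a direct summand of $\coprod_{(x,z)}\R e_z$. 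Since the topological endomorphism ring of $\coprod_{(x,z)}\R e_z$ is topologically semiperfect and the summands $\R e_z$ correspond to its ``diagonal'' complete orthogonal family of local idempotents, an application of Lemma~\ref{simples-over-top-semisimple-via-idempotents}(a) modulo the topological Jacobson radical together with the uniqueness of projective covers identifies $\Q_k$ with one of the $\R e_z$. This settles the $\aleph_1$-presentable case.

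\emph{The general case.} For an arbitrary projective contramodule $\P$ we invoke the local $\aleph_1$-presentability of $\R\Contra$ once more: a Kaplansky-type argument decomposes $\P\simeq\coprod_{j\in J}\P_j$ with each $\P_j$ an $\aleph_1$-presentable projective contramodule (the contramodule counterpart of Kaplansky's theorem that projective modules are direct sums of countably generated ones). Applying the previous paragraph to each $\P_j$ and collecting the decompositions yields $\P$ as a coproduct of contramodules of the form $\R e_x$ with $e_x$ a local idempotent, as required.

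The main obstacle is precisely this last Kaplansky-type reduction, and it is the only place where the countable base of neighborhoods of zero is genuinely used: here the hypothesis plays exactly the role that countable generatedness plays in the Crawley--J\o nsson--Warfield theorem, and it is the lack of a substitute for general $\R$ that leaves the unrestricted version open (in parallel with the open status of the countable-generation hypothesis there). By contrast, the remaining ingredients --- Proposition~\ref{countable-decomposition-of-idempotents-prop}, Proposition~\ref{semiperfect-decomposition}, the comparison of idempotent decompositions with contramodule coproduct decompositions, and the uniqueness of projective covers --- are by now routine, so once the reduction is in place the proof should go through cleanly.
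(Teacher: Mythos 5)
Your overall strategy coincides with the paper's: first handle countably generated projectives by passing to the topological endomorphism ring of $\R[[\omega]]$ (your $\T$ is exactly the ring $\S$ of row-zero-convergent $\omega\times\omega$ matrices used in the paper's Lemma~\ref{countable-decomposition-for-countable-basis-lem}), observing it is topologically semiperfect with a countable base and feeding the idempotent $e$ into Proposition~\ref{countable-decomposition-of-idempotents-prop}; then reduce the general case to the countably generated one by a Kaplansky-type decomposition. The countably generated step of your argument is sound, and your identification of each $\Q_k$ with some $\R e_z$ via locality of $\varepsilon_k\T\varepsilon_k$ and uniqueness of projective covers is a workable (if slightly more roundabout) version of what the paper does through the Morita equivalence $\R\Contra\simeq\T\Contra$.

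The gap is the Kaplansky step. You assert that ``local $\aleph_1$-presentability of $\R\Contra$'' yields a decomposition of an arbitrary projective contramodule into a coproduct of countably generated ones. Local presentability gives no such thing: Kaplansky's theorem is not a formal consequence of the existence of a generating set of $\aleph_1$-presentable objects, but a genuine structure theorem requiring the transfinite ``every element sits in a countably generated direct summand'' argument. You flag this as the main obstacle but then do not supply the argument, so as written the proof is incomplete precisely at the step you lean on. The paper fills this in as Proposition~\ref{kaplansky-for-projective-contramodules-prop}: realize $\R\simeq\Hom_A(M,M)^\rop$ for a module $M$ with a generating set of cardinality at most $\kappa$ (possible by \cite[Corollary~4.4 and Remark~4.5]{PS3} when $\R$ has a base of neighborhoods of zero of cardinality at most~$\kappa$), transport the problem along the equivalence $\Add(M)\simeq\R\Contra_\proj$, and invoke Kaplansky's theorem for modules, together with Lemma~\ref{kappa-generated-projective-contramodules} to see that $\kappa$-generatedness matches up on the two sides. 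One further correction: the Kaplansky reduction is \emph{not} ``the only place where the countable base is genuinely used,'' nor is it the obstruction to dropping that hypothesis. The reduction works for an arbitrary cardinal~$\kappa$; what genuinely requires countability is the first step, since Proposition~\ref{countable-decomposition-of-idempotents-prop} (whose iterative construction produces a countable chain of idempotents) only decomposes idempotents $e$ for which $e\R$ has a countable base. For uncountable $\kappa$ one is left with $\kappa$-generated projective pieces that one does not know how to split further, which is exactly the open Crawley--J\o{}nsson--Warfield-type problem recorded in the remarks following the theorem.
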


\begin{rems}
 (1)~By~\cite[Corollary 4.4 and Remark 4.5]{PS3}, we can realize $\R$ as the topological endomorphism ring $\R=\Hom_A(M,M)^\rop$ (equipped with the finite topology) of a countably generated left module $M$ over a ring $A$.
Since $\R$ is topologically semiperfect, $M$ is a direct sum of modules with local endomorphism rings (Proposition~\ref{semiperfect-decomposition}).
Furthermore, \cite[Theorem~3.14(iii)]{PS3} says that there is an equivalence $\Add(M)\simeq\R\Contra_\proj$, where $\Add(M)\subset A\Modl$ is the full subcategory given by all direct summands of direct sums of copies of $M$.
Keeping this equivalence in mind, Theorem~\ref{projective-contramodules-for-countable-basis-thm} can be in fact quickly deduced from the Crawley--J\o{}nsson--Warfield \cite[Theorem 26.5]{AF},
which says that every $N\in\Add(M)$ is a direct sum of modules with local endomorphism rings.
However, Proposition~\ref{countable-decomposition-of-idempotents-prop} 
allows us to provide a direct contramodule-based argument for a key step.

\smallskip
 (2)~We do not know whether the same result holds without the assumption of $\R$ having a countable base of open neighborhoods of zero.
In that case we can still assume by~\cite[Corollary 4.4]{PS3} that $\R=\Hom_A(M,M)^\rop$ for a left module $M$ over a ring $A$ such that $M$ is a direct sum of modules with local endomorphism rings,
but now $M$ need not be countably generated.
There is still an equivalence $\Add(M)\simeq\R\Contra_\proj$, so the question again translates to the question whether every module in $N\in\Add(M)$ is a direct sum of modules with local endomorphism rings.
This appears to be a long standing open problem (see for instance the first paragraph of~\cite[Section 26]{AF}).

\smallskip
 (3)~Suppose that $\Q$ is a projective left $\R$\+contramodule over a topologically semiperfect topological ring and
denote by $\HH=\HH(\R)$ the topological Jacobson radical and by $\S=\R/\HH$ the corresponding topologically semisimple quotient ring.
Then $\Q/\HH\tim\Q$ comes from an $\S$\+contramodule by a contrarestriction of scalars with respect to the surjection $\R\rarrow\S$, so it is a semisimple $\R$\+contramodule.
That is, $\Q/\HH\tim\Q=\coprod_{z\in Z}\C_z$ with each $\C_z$ simple in $\R\Contra$.
As we know from Lemma~\ref{simple-contramodules-projective-covers}, for each $z\in Z$ we have a projective cover $\P_z\rarrow\C_z$, so there is also a surjective homomorphism $\P:=\coprod_{z\in Z}\P_z\rarrow\coprod_{z\in Z}\C_z$ with $\P$ a projective contramodule, and it is not difficult to check that the kernel is $\HH\tim\P$.

Thus, we have two projective left $\R$\+contramodules $\P,\Q$ with
$\P/\HH\tim\P=\coprod_{z\in Z}\C_z=\Q/\HH\tim\Q$ and the question in~(2) can be reformulated to the problem of whether $\P\simeq\Q$ in $\R\Contra$.
If the natural surjections $\P\rarrow\P/\HH\tim\P$ and $\Q\rarrow\Q/\HH\tim\Q$ were projective covers, $\P$ and $\Q$ would have to be isomorphic.
This happens for instance if the set $Z$ above is finite, but it fails in general.
For instance, the surjection $\R\rarrow\R/\HH$ is a projective cover if and only if $\HH$ coincides with the abstract Jacobson radical $H(\R)$.
That can be seen either from Lemma~\ref{fin-gen-modules-contramodules-proj-covers-the-same} and the well known fact that $H(\R)$ is the maximal superfluous left $\R$\+submodule of $\R$,
or by~\cite[Lemma 2.5]{BPS}.
However, we presented in Example~\ref{topological-jacobson-orthogonalization-counterex} an example of a topologically semiperfect topological ring, even one with a countable base of open neighborhoods of zero, for which $H(\R)\ne\HH$.
\end{rems}

Our main point here is that as a consequence
of Proposition~\ref{countable-decomposition-of-idempotents-prop},
a special case of Theorem~\ref{projective-contramodules-for-countable-basis-thm} for projective contramodules which have a countable generating set follows quickly.

\begin{lem} \label{countable-decomposition-for-countable-basis-lem}
Let $\R$ be a topologically semiperfect topological ring with a countable base of open neighborhoods of zero and
let $\P$ be a countably generated projective contramodule
(that is $\P$ is a direct summand of $\R[[\omega]]$ in $\R\Contra$).
Then there is a decomposition $\P=\coprod_{x\in X}\P_x$, where each
$\P_x$ is a projective cover of a simple contramodule.
\end{lem}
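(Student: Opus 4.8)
The plan is to push the statement through the realization of $\R$ as a topological endomorphism ring together with the equivalence of~\cite[Theorem~3.14(iii)]{PS3}, so that the desired decomposition of $\P$ becomes the decomposition of an idempotent in an auxiliary topologically semiperfect topological ring with a countable base, where Proposition~\ref{countable-decomposition-of-idempotents-prop} applies. First, using~\cite[Corollary~4.4 and Remark~4.5]{PS3} I would write $\R=\End_A(M)^\rop$ with the finite topology, where $M$ is a countably generated left $A$\+module; since $\R$ is topologically semiperfect, Proposition~\ref{semiperfect-decomposition} provides a decomposition $M=\bigoplus_{z\in Z}M_z$ into $A$\+modules with local endomorphism rings. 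By~\cite[Theorem~3.14(iii)]{PS3} there is a covariant equivalence $\Add_{A\Modl}(M)\simeq\R\Contra_\proj$ carrying $M$ to the free contramodule $\R$, carrying direct sums to coproducts (so $M^{(\omega)}$ to $\R[[\omega]]=\coprod_\omega\R$), and inducing isomorphisms of endomorphism rings. Under this equivalence the countably generated projective contramodule $\P$, being a direct summand of $\R[[\omega]]$, corresponds to a direct summand $N$ of $M^{(\omega)}$.

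Next I would pass to $\R''=\End_A(M^{(\omega)})^\rop$ with the finite topology. Because $M^{(\omega)}$ is countably generated, the annihilators of the submodules generated by the initial segments of a countable generating set form a countable base of neighborhoods of zero in $\R''$; and because $M^{(\omega)}=\bigoplus_{(i,z)\in\omega\times Z}M_z^{(i)}$ is a direct sum of modules with local endomorphism rings, $\R''$ is topologically semiperfect by Proposition~\ref{semiperfect-decomposition}, so each simple discrete right $\R''$\+module admits a projective cover by Proposition~\ref{discrete-fg-projective-covers}. Writing $\varepsilon\in\R''$ for the idempotent with $\im(M^{(\omega)}\larrow M^{(\omega)}\,:\!\varepsilon)=N$, and noting that $\varepsilon\R''$ inherits a countable base from $\R''$, Proposition~\ref{countable-decomposition-of-idempotents-prop} yields a finite or countable zero-convergent family $(f_k)$ of pairwise orthogonal local idempotents in $\R''$ with $\varepsilon=\sum_k f_k$. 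Exactly as in the implication (3)\,$\Rightarrow$\,(1) of Theorem~\ref{topologically-semiperfect-rings}, transplanted to the module $M^{(\omega)}$ with $\varepsilon$ playing the role of the unit, this produces an internal direct sum $N=\bigoplus_k N_k$ with $N_k=\im(M^{(\omega)}\larrow M^{(\omega)}\,:\!f_k)$ and $\End_A(N_k)\simeq(f_k\R''f_k)^\rop$ local.

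It then remains to identify the pieces. Each $N_k$ is a direct summand with local endomorphism ring of the direct sum $\bigoplus_{(i,z)}M_z^{(i)}$ of modules with local endomorphism rings, so by the Krull--Schmidt--Remak--Azumaya theorem~\cite[Theorem~12.6]{AF} (via the exchange property of modules with local endomorphism rings) one gets $N_k\simeq M_{z(k)}$ for some $z(k)\in Z$. Transporting back along the equivalence gives $\P=\coprod_k\P_k$, where $\P_k$ is the contramodule corresponding to $M_{z(k)}$; since $M_{z(k)}$ is a direct summand of $M$ cut out by a local idempotent of $\R=\End_A(M)^\rop$, the contramodule $\P_k$ is isomorphic to $\R e_{z(k)}$ for a local idempotent $e_{z(k)}\in\R$, and such a contramodule is a projective cover of a simple contramodule by Corollary~\ref{local-idempotent-cor}(3), Lemma~\ref{fin-gen-modules-contramodules-proj-covers-the-same}, and the proof of Lemma~\ref{simple-contramodules-projective-covers}. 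This is the required decomposition $\P=\coprod_{x\in X}\P_x$. The main obstacle is the topological bookkeeping: one must check that the equivalence $\Add_{A\Modl}(M)\simeq\R\Contra_\proj$ respects the finite-topology structures closely enough for $\R''$ to be computed as an honest endomorphism ring carrying a countable base, and that the passage from $\varepsilon=\sum_k f_k$ to $N=\bigoplus_k N_k$ goes through verbatim; the two external inputs, Proposition~\ref{countable-decomposition-of-idempotents-prop} and~\cite[Theorem~12.6]{AF}, are then applied directly.
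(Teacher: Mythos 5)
Your argument is correct, and its engine is the same as the paper's: present $\P$ as $e\cdot(\text{endomorphism ring of the countable free object})$ and feed that idempotent to Proposition~\ref{countable-decomposition-of-idempotents-prop}. The difference is where you compute that endomorphism ring. The paper stays entirely on the contramodule side: it forms $\S=\Hom^\R(\R[[\omega]],\R[[\omega]])^\rop$, identifies it with the ring of row-zero-convergent $\omega\times\omega$ matrices over $\R$ (so the countable base is read off from \cite[Lemma~5.1]{PS3}), uses the Morita equivalence $\R\Contra\simeq\S\Contra$ of \cite[Section~5]{PS3} to turn $\P$ into $\S e$, decomposes $e$, and recognizes each $\S e_i$ as a projective cover of a simple contramodule directly from the local-idempotent criterion before transporting back. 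You instead realize everything in $\Add(M)\subset A\Modl$ via \cite[Theorem~3.14(iii)]{PS3}, work with $\R''=\End_A(M^{(\omega)})^\rop$ (where the countable base comes from countable generation of $M^{(\omega)}$), and then need one extra input the paper avoids: the Krull--Schmidt--Remak--Azumaya theorem to identify each summand $N_k$ with some $M_{z(k)}$, whence $\P_k\simeq\R e_{z(k)}$. That last step is a detour --- it suffices that each $f_k$ is a local idempotent, exactly as in the paper's use of Corollary~\ref{local-idempotent-cor} plus Lemma~\ref{fin-gen-modules-contramodules-proj-covers-the-same} --- but it is harmless and correct. The ``topological bookkeeping'' you flag (that the equivalence of topologically agreeable categories identifies $\R''$ with $\S$ as topological rings) is genuine but is exactly what \cite[Theorem~3.14]{PS3} provides; the paper's matrix-ring route simply makes that identification, and the countability of the base, explicit.
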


\begin{proof}
Here we use Morita theory for contramodules from~\cite[Section 5]{PS3} (see also \cite[Section 7.3]{PS1}).
Since $\R\Contra_\proj$ is a topologically agreeable additive category by \cite[Remark 3.12]{PS3}, $\S=\Hom^\R(\R[[\omega]],\R[[\omega]])^\rop$ is naturally a complete separated right linear topological ring.
In fact, $\S$ can be identified with the ring of row-zero-convergent matrices of size $\omega\times\omega$ and the topology was explicitly described in \cite[Lemma 5.1]{PS3}.
As a consequence, we observe that $\S$ also has a countable base of neighborhoods of zero.
Furthermore, by \cite[Proposition~5.3 and Lemma~5.4]{PS3}, there is an equivalence of categories
$\R\Contra\simeq\S\Contra$ which sends $\R[[\omega]]$ to $\S$.
In particular, $\P$ is sent to a direct summand of $\S$ in $\S\Contra$, so to a projective $\S$-contramodule of the form $\S e$ for an idempotent $e=e^2\in\S$.

Now we apply Proposition~\ref{countable-decomposition-of-idempotents-prop} and express $e=\sum_ie_i$, where $e_1$, $e_2$, $e_3$,~\dots\ is a zero-convergent finite or countable family of pairwise orthogonal local idempotents in $\S$.
Using an obvious modification of the argument for Theorem~\ref{topologically-semiperfect-rings}(3)\,$\Longrightarrow$\,(1),
we obtain a decomposition $\S e=\coprod_{i=1}^N\S e_i$ in $\S\Contra$, where each $\S e_i$ is a projective cover of a simple $\S$\+contramodule.
Applying the equivalence $\R\Contra\simeq\S\Contra$ once again,
we obtain the desired decomposition of~$\P$ in $\R\Contra$.
\end{proof}

In order to prove Theorem~\ref{projective-contramodules-for-countable-basis-thm} in full generality, we will need a version of a structure theorem of Kaplansky~\cite[Corollary 26.2]{AF} for projective contramodules.
Although one can certainly give a direct contramodule-based proof, we prefer to reduce it a statement about modules.
To this end, if $M$ is a left $A$\+module and $\R=\Hom_A(M,M)^\rop$ equipped with the finite topology, we denote by $\Psi\:\Add(M)\rarrow\R\Contra_\proj$ the equivalence from \cite[Theorem~3.14(iii)]{PS3})
(it was mentioned above several times).

\begin{lem} \label{kappa-generated-projective-contramodules}
Let $\kappa$ be an infinite cardinal, $A$ a ring,
$M$ a left $A$\+module generated by a set of cardinality at most $\kappa$, and let $\R=\Hom_A(M,M)^\rop$ be the topological endomorphism ring of $M$ equipped with the finite topology.
Then the following are equivalent for an $A$\+module $N\in\Add(M)$:
\begin{enumerate}
\item $N$ is generated by a set of cardinality at most $\kappa$ as a left $A$\+module,
\item $\Psi(N)$ is generated by a set of cardinality at most $\kappa$ as a left $\R$\+contramodule.
\end{enumerate}
\end{lem}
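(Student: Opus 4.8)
The plan is to factor both conditions through the single intermediate property of being a direct summand of a $\kappa$-indexed coproduct of copies of the ambient generator, and then transport this property across the equivalence $\Psi$. The two facts I need about $\Psi$ are that it sends $M$ to the free left $\R$\+contramodule $\R$ on one generator and, being an additive equivalence, preserves set-indexed coproducts; hence $\Psi(M^{(\kappa)})\simeq\R[[\kappa]]$, and more generally $\Psi$ restricts to a bijection between direct summands of $M^{(\kappa)}$ in $\Add(M)$ and direct summands of $\R[[\kappa]]$ in $\R\Contra_\proj$.

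First I would prove the module-side equivalence: for $N\in\Add(M)$, the module $N$ is generated by at most $\kappa$ elements over $A$ if and only if $N$ is a direct summand of $M^{(\kappa)}$. For the ``if'' direction, observe that since $M$ is $\kappa$\+generated and $\kappa$ is infinite, $M^{(\kappa)}$ is $\kappa$\+generated, and any quotient, hence any direct summand, of a $\kappa$\+generated module is $\kappa$\+generated. For the ``only if'' direction, fix a decomposition $M^{(I)}=N\oplus N'$ with projection $\pi\colon M^{(I)}\rarrow N$, and a generating family $(x_j)_{j\in J}$ of $N$ with $|J|\le\kappa$. Each $x_j$ has finite support in $I$, so $I_0:=\bigcup_{j\in J}\operatorname{supp}(x_j)$ has cardinality at most $\kappa$; then $N\subseteq M^{(I_0)}\subseteq M^{(I)}$, and the restriction of $\pi$ to $M^{(I_0)}$ is a retraction of the inclusion $N\hookrightarrow M^{(I_0)}$. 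Thus $N$ is a direct summand of $M^{(I_0)}$, hence of $M^{(\kappa)}$.

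Next I would record the easier contramodule-side equivalence: a projective left $\R$\+contramodule $\P$ is generated by at most $\kappa$ elements as a contramodule if and only if it is a direct summand of $\R[[X]]$ for some $X$ with $|X|\le\kappa$, equivalently of $\R[[\kappa]]$. One direction is immediate from the splitting projection $\R[[X]]\rarrow\P$; the other follows because any surjection onto a projective contramodule splits, so a $\kappa$\+generating surjection $\R[[X]]\rarrow\P$ realizes $\P$ as a direct summand of $\R[[X]]$. Combining this with the module-side equivalence via the bijection induced by $\Psi$ between direct summands of $M^{(\kappa)}$ in $\Add(M)$ and direct summands of $\R[[\kappa]]$ in $\R\Contra_\proj$ yields $(1)\Leftrightarrow(2)$. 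I do not expect a genuine obstacle here: the one point needing a little care is the finite-support bound in the module-side argument, which is precisely where the hypotheses that $M$ is $\kappa$\+generated and that $\kappa$ is infinite enter, together with the routine observation that $\Psi$ carries $M$ to the rank-one free contramodule $\R$ and preserves coproducts, as follows from the construction of the equivalence in \cite[Theorem~3.14(iii)]{PS3}.
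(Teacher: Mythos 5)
Your proposal is correct and follows essentially the same route as the paper: both reduce (1) and (2) to being a direct summand of $M^{(\kappa)}$, respectively of $\R[[\kappa]]$, and then transport across $\Psi$ using $\Psi(M^{(\kappa)})\simeq\R[[\kappa]]$. You merely spell out the support/retraction argument and the splitting of the generating surjection, which the paper leaves implicit.
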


\begin{proof}
Note that $N$ satisfies (1) if and only if $N$ is a direct summand of $M^{(\kappa)}$ in $A\Modl$.
Since $\Psi(N)$ is a projective contramodule, it similarly satisfies (2) if and only if it is a direct summand of $\R[[\kappa]]$.
The conclusion then follows from the fact that the equivalence $\Psi\:\Add(M)\rarrow\R\Contra_\proj$ sends $M^{(\kappa)}$ to $\R[[\kappa]]$.
\end{proof}

\begin{prop}[Kaplansky] \label{kaplansky-for-projective-contramodules-prop}
Let $\kappa$ be an infinite cardinal and $\R$ be a topological ring with a base of neighborhoods of zero of cardinality at most $\kappa$.
Then each projective contramodule $\P$ is a coproduct in $\R\Contra$ of a family of (projective) contramodules generated by sets of cardinalities at most $\kappa$.
\end{prop}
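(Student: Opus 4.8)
The plan is to reduce the statement to the classical structure theorem of Kaplansky for projective modules, using the Morita-type equivalence $\Psi\colon\Add(M)\to\R\Contra_\proj$ of~\cite[Theorem~3.14(iii)]{PS3} together with Lemma~\ref{kappa-generated-projective-contramodules}.

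First I would realize $\R$ concretely. Since $\R$ has a base of open neighborhoods of zero of cardinality at most $\kappa$, by~\cite[Corollary~4.4 and Remark~4.5]{PS3} we may write $\R=\Hom_A(M,M)^\rop$, equipped with the finite topology, for some ring $A$ and some left $A$\+module $M$ generated by a set of cardinality at most $\kappa$. With this choice of $\kappa$, $A$, $M$, Lemma~\ref{kappa-generated-projective-contramodules} applies. As $\P$ is projective, it lies in $\R\Contra_\proj$, so by essential surjectivity of $\Psi$ we may write $\P\simeq\Psi(N)$ for some $N\in\Add(M)$.

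Next, being an object of $\Add(M)$, the module $N$ is a direct summand of $M^{(I)}$ for some set $I$, and $M^{(I)}$ is a direct sum of modules each generated by at most $\kappa$ elements. Kaplansky's theorem (\cite[Theorem~26.1]{AF}; the proof goes through verbatim with $\aleph_0$ replaced by an arbitrary infinite cardinal $\kappa$) then yields a decomposition $N=\bigoplus_{j\in J}N_j$ in $A\Modl$ with each $N_j$ generated by a set of cardinality at most $\kappa$. Each $N_j$ is a direct summand of $N$, hence of $M^{(I)}$, hence $N_j\in\Add(M)$; and, the inclusion $\Add(M)\hookrightarrow A\Modl$ being full, $N$ is the coproduct of the family $(N_j)_{j\in J}$ in $\Add(M)$. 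Transporting along the equivalence $\Psi$ — which preserves coproducts, and noting that $\R\Contra_\proj$ is closed under coproducts taken in $\R\Contra$ (a coproduct of projective contramodules is projective) — we obtain $\P\simeq\coprod_{j\in J}^{\R\Contra}\Psi(N_j)$. Finally, each $\Psi(N_j)$ is generated by a set of cardinality at most $\kappa$ by Lemma~\ref{kappa-generated-projective-contramodules}, which finishes the argument.

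Once the ingredients are assembled the argument is essentially bookkeeping; the only point that needs a little care is the cardinality claim in the realization step, namely that a base of neighborhoods of zero of size at most $\kappa$ allows one to choose the module $M$ generated by at most $\kappa$ elements. I expect this to follow directly from the explicit construction underlying~\cite[Corollary~4.4]{PS3} as recorded in~\cite[Remark~4.5]{PS3} (this is exactly the mechanism already used, in the countable case, in the first remark following Theorem~\ref{projective-contramodules-for-countable-basis-thm}); a secondary minor point is checking that Kaplansky's theorem is valid for an arbitrary infinite cardinal and not only for $\aleph_0$.
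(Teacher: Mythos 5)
Your proposal is correct and follows essentially the same route as the paper's own proof: realize $\R$ as $\Hom_A(M,M)^\rop$ with $M$ generated by at most $\kappa$ elements via \cite[Corollary~4.4 and Remark~4.5]{PS3}, pull $\P$ back to $N\in\Add(M)$ along $\Psi$, apply Kaplansky's theorem \cite[Theorem~26.1]{AF}, and transfer the decomposition back using Lemma~\ref{kappa-generated-projective-contramodules}. The paper cites \cite[Theorem~26.1]{AF} directly for an arbitrary infinite cardinal, so your cautionary remark about generalizing from $\aleph_0$ is not needed.
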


\begin{proof}
By~\cite[Corollary 4.4]{PS3}, there is a ring $A$ and a left $A$\+module $M$ such that there is an isomorphism of topological rings $\R\simeq\Hom_A(M,M)^\rop$, where the latter is equipped with the finite topology.
Moreover, thanks to~\cite[Remark 4.5]{PS3} we can choose $A$ and $M$ so that $M$ has a generating set of cardinality at most~$\kappa$ as a left $A$\+module.
Now, given any $\P\in\R\Contra_\proj$, we employ the equivalence $\Psi\:\Add(M)\rarrow\R\Contra_\proj$ and find $N\in\Add(M)$ such that $\Psi(N)\simeq\P$.
By a theorem of Kaplansky for modules \cite[Theorem 26.1]{AF},
there exists a direct sum decomposition $N=\bigoplus_{x\in X}N_x$, where each $N_x$ possesses a generating set of cardinality at most $\kappa$.
If we denote $\P_x=\Psi(N_x)$ for each $x\in X$, the equivalence $\Psi$ transfers this decomposition to an isomorphism of contramodules
$\P\simeq\coprod_{x\in X}\P_x$ and each $\P_x$ is generated by a set of cardinality at most $\kappa$ by Lemma~\ref{kappa-generated-projective-contramodules}.
\end{proof}

Finally, we can complete the proof of the theorem.

\begin{proof}[Proof of Theorem~\ref{projective-contramodules-for-countable-basis-thm}]
Let $\R$ be a topologically semiperfect topological ring with a countable base of open neighborhoods of zero and let $\P$ be a projective contramodule.
By Proposition~\ref{kaplansky-for-projective-contramodules-prop} applied to $\P$ with $\kappa=\omega$, we obtain a decomposition $\P=\coprod_{z\in Z}\P_z$, where each $\P_z$ is a countably generated projective contramodule.
It remains to apply Lemma~\ref{countable-decomposition-for-countable-basis-lem} which says that each $\P_z$ in turn decomposes as $\P_z=\coprod_{x\in X_z}\P_{z,x}$, where each $\P_{z,x}$ is a projective cover of a simple contramodule.
\end{proof}

\end{document}